\def\Dfn#1{{\sf #1}}
\def\1#1{\operatorname{\bf 1}_k}
\def\S#1{\mathcal{S}_{#1}}
\def\D{\operatorname{D}}
\def\RP{\mathcal{RP}}
\def\Fne{\mathcal{F}_{N\!E}}
\def\Fse{\mathcal{F}_{S\!E}}
\def\As{\mathcal{A}_\sigma}
\def\FT{\mathcal{FT}}
\def\Cat{\operatorname{Cat}}
\def\CS{\mathcal{CS}}
\def\textcross{
	\begin{minipage}{13pt}
		\begin{tikzpicture}[scale=1]
			\cpipedream{0.4}{(0,0)}{0/0/black/black}
		\end{tikzpicture}
	\end{minipage}
}
\def\textelbow{
	\begin{minipage}{13pt}
		\begin{tikzpicture}[scale=1]
			\tpipedream{0.4}{(0,0)}{0/0/black/black}
		\end{tikzpicture}
	\end{minipage}
}
\newcommand{\ngon}[8]{ 

	\foreach \t in {1,...,#1} {
		\coordinate (#7\t) at ($#2+(#8-\t*360/#1:#3)$);
	}

	\foreach \x/\y/\z in {#4}{
		\draw[\z,shorten <=#5pt, shorten >=#5pt] {(#7\x)--(#7\y)};
	}

	\setcounter{intege}{1}
	\pgfmathsetcounter{intege}{1}
	\foreach \object in {#6}{
		\node[inner sep=0pt] at (#7\theintege) {\object};
		\pgfmathsetcounter{intege}{\theintege+1}
		\setcounter{intege}{\theintege}
	}
}
\newcommand{\tikzbox}[8]{ 
  \coordinate (A) at #2;
  \coordinate (B) at ($ (A) + (#1,0) $);
  \coordinate (C) at ($ (A) + (0,#1) $);
  \coordinate (D) at ($ (A) + (#1,#1)$);
  \coordinate (E) at ($ (A) + (#1/2,#1/2) $);

  \draw[fill=#7] (A) rectangle (D);

  \draw[color=#3] (A) -- (B);
  \draw[color=#6] (A) -- (C);
  \draw[color=#4] (B) -- (D);
  \draw[color=#5] (C) -- (D);
  
  \node at (E) []{#8};
}
\newcommand{\bgbox}[4]{ 
  \tikzbox{#1}{#2}{black}{black}{black}{black}{#3}{#4}
}
\newcommand{\blackbox}[3]{ 
  \tikzbox{#1}{#2}{black}{black}{black}{black}{white}{#3}
}
\newcommand{\boxcollection}[3]{ 
  \coordinate (X) at #2;
	
  \foreach \x/\y/\object in {#3} {
		\blackbox{#1}{($ (X) + (#1 * \x, - #1 * \y) $)}{\object};
	}
}
\newcommand{\bgboxcollection}[3]{ 
  \coordinate (X) at #2;
	
  \foreach \x/\y/\object/\c in {#3} {
		\bgbox{#1}{($ (X) + (#1 * \x, - #1 * \y) $)}{\c}{\object};
	}
}
\newcommand{\content}[3]{ 
  \coordinate (C) at #2;
	
  \foreach \x/\y/\object in {#3} {
		\node at ($ (C) + ( #1 * \x, -#1 * \y ) + ( #1 / 2, -#1 / 2 )$) {\object};
	}
}
\newcommand{\tpipedream}[3]{ 
  \coordinate (P) at #2;
	
  \foreach \x/\y/\a/\b in {#3} {
		\coordinate (P1) at ($ (P) + ( #1 * \x , -#1 * \y ) + ( 0      , #1 / 2 ) $);
	  \coordinate (P2) at ($ (P) + ( #1 * \x , -#1 * \y ) + ( #1     , #1 / 2 ) $);
	  \coordinate (P3) at ($ (P) + ( #1 * \x , -#1 * \y ) + ( #1 / 2 , #1     ) $);
	  \coordinate (P4) at ($ (P) + ( #1 * \x , -#1 * \y ) + ( #1 / 2 , 0 ) $);
	  \coordinate (P5) at ($ (P) + ( #1 * \x , -#1 * \y ) + ( #1 / 2 , #1 / 2 ) $);
	  \draw[rounded corners=4, color=\a, thick] (P1) -- (P5) -- (P3);
	  \draw[rounded corners=4, color=\b, thick] (P4) -- (P5) -- (P2);
	}

}
\newcommand{\cpipedream}[3]{ 
  \coordinate (P) at #2;
	
  \foreach \x/\y/\a/\b in {#3} {
		\coordinate (P1) at ($ (P) + ( #1 * \x , -#1 * \y ) + ( 0      , #1 / 2 ) $);
	  \coordinate (P2) at ($ (P) + ( #1 * \x , -#1 * \y ) + ( #1     , #1 / 2 ) $);
	  \coordinate (P3) at ($ (P) + ( #1 * \x , -#1 * \y ) + ( #1 / 2 , #1     ) $);
	  \coordinate (P4) at ($ (P) + ( #1 * \x , -#1 * \y ) + ( #1 / 2 , 0 ) $);
	  \coordinate (P5) at ($ (P) + ( #1 * \x , -#1 * \y ) + ( #1 / 2 , #1 / 2 ) $);
	  \draw[rounded corners=0.2, color=\a, thick] (P1) -- (P5) -- (P3);
	  \draw[rounded corners=0.2, color=\b, thick] (P4) -- (P5) -- (P2);
	}

}
\newcommand{\latticepath}[4]{ 
  \coordinate (L) at #2;
	
  \foreach \x/\y/\a in {#4} {
  	\coordinate (L1) at ($ (L) + ( #1 * \x , #1 * \y ) $);
	  \draw[color=\a, #3] (L) -- (L1);
	  \coordinate (L) at (L1);
	}

}
\definecolor{verylightgrey}{rgb}{.80,.80,.80}
\definecolor{lightgrey}{rgb}{.6 , .6 , .6}
\definecolor{grey}{rgb}{.5 , .5 , .5}
\definecolor{darkgrey}{rgb}{.35 , .35 , .35}
\newtheorem{theorem}{Theorem}[section]
\newtheorem{corollary}[theorem]{Corollary}
\newtheorem{lemma}[theorem]{Lemma}
\newtheorem{proposition}[theorem]{Proposition}
\newtheorem{conjecture}[theorem]{Conjecture}
\theoremstyle{definition}
\begin{document}
	\title[Maximal fillings, simplicial complexes, and Schubert polynomials]{Maximal fillings of moon polyominoes, simplicial complexes, and Schubert polynomials}

	\author[Luis Serrano]{Luis Serrano$^*$}
	\author[Christian Stump]{Christian Stump$^\dagger$}

	\address{LaCIM, Universit\'e du Qu\'ebec \`a Montr\'eal, Montr\'eal Canada \\}
	\email{serrano@lacim.ca, christian.stump@lacim.ca}

	\subjclass[2000]{Primary 05E45; Secondary 05A05, 05E05}
	\date{\today}
	\keywords{filling of moon polyomino, $k$-triangulation, fan of Dyck paths, pipe dream, simplicial complex, cyclic sieving phenomenon, flagged Schur function, Edelman--Greene insertion, Schubert polynomial}
	\thanks{$^*$supported by NSERC; $^\dagger$supported by CRM-ISM}
	\begin{abstract}
		We exhibit a canonical connection between maximal $(0,1)$-fillings of a moon polyomino avoiding north-east chains of a given length and reduced pipe dreams of a certain permutation. Following this approach we  show that the simplicial complex of such maximal fillings is a vertex-decomposable, and thus shellable, sphere. In particular, this implies a positivity result for Schubert polynomials. For Ferrers shapes, we moreover construct a bijection to maximal fillings avoiding south-east chains of the same length which specializes to a bijection between $k$-triangulations of the $n$-gon and $k$-fans of Dyck paths of length $2(n-2k)$. Using this, we translate a conjectured cyclic sieving phenomenon for $k$-triangulations with rotation to the language of $k$-flagged tableaux with promotion.
	\end{abstract}
	\maketitle

	\section{Introduction}
		Fix positive integers $n$ and $k$ such that $2k < n$. A \Dfn{$k$-triangulation} of a convex $n$-gon is a maximal collection of diagonals in the $n$-gon such that no $k+1$ diagonals mutually cross. A \Dfn{$k$-fan of Dyck paths} of length $2 \ell$ is a collection of $k$ Dyck paths from $(0,0)$ to $(\ell,\ell)$ which  do not cross (although they may share edges).
		
		The following theorem is the first main result in this article. It answers a question in R.~Stanley's Catalan Addendum \cite{Sta2011}, and extends results by S.~Elizalde~\cite{Eli2007} and C.~Nicol\'as~\cite{Nic2009}.
		\begin{theorem}\label{th:1}
			There is an explicit bijection between $k$-triangulations of a convex $n$-gon and $k$-fans of Dyck paths of length $2(n-2k)$.
		\end{theorem}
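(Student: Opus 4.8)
The plan is to funnel both families into $(0,1)$-fillings of one common triangular Ferrers shape and to exhibit the bijection at that level. Write $\Dnk$ for the triangular shape whose cells index the \emph{relevant} diagonals of the $n$-gon, discarding the short diagonals that cannot belong to any set of $k+1$ mutually crossing diagonals, since those lie in every $k$-triangulation. Under a suitable such indexing, two diagonals cross precisely when the corresponding cells form an increasing (north-east) chain of length two, so that $k+1$ pairwise crossing diagonals correspond exactly to a north-east chain of length $k+1$ among the occupied cells. A $k$-triangulation then becomes a collection of $1$-cells in $\Dnk$ containing no north-east chain of length $k+1$, maximal under inclusion. Because every $k$-triangulation of the $n$-gon has the same number of diagonals (Jonsson's theorem), maximal here coincides with having the maximum number of $1$'s, and so the left-hand side is identified with the \emph{maximal} $(0,1)$-fillings of $\Dnk$ avoiding north-east chains of length $k+1$.

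Next I would encode the right-hand side as maximal fillings of the \emph{same} shape that avoid south-east chains of length $k+1$. A single Dyck path from $(0,0)$ to $(\ell,\ell)$, with $\ell=n-2k$, is recorded by the cells of $\Dnk$ lying weakly above it, and a $k$-fan is a non-crossing $k$-tuple of such paths. Reading the fan through the antidiagonals of $\Dnk$ and marking, in each antidiagonal, the positions at which successive paths separate, the non-crossing condition should translate into the absence of a south-east chain of length $k+1$, while the requirement that all $k$ paths run from corner to corner forces the filling to be maximal. This would give a bijection between $k$-fans of Dyck paths of length $2(n-2k)$ and maximal $(0,1)$-fillings of $\Dnk$ avoiding south-east chains of length $k+1$.

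What remains, and what I expect to be the crux, is a bijection between the maximal fillings of the Ferrers shape $\Dnk$ avoiding north-east chains of length $k+1$ and those avoiding south-east chains of length $k+1$. My approach is to route this through reduced pipe dreams. First I would show that the north-east-avoiding maximal fillings of $\Dnk$ are exactly the reduced pipe dreams of one fixed permutation $w_{n,k}$: the crossing/elbow data of a pipe dream is a $(0,1)$-filling, reducedness of the word read off the pipes is equivalent to the forbidden chain length not being exceeded, and all such fillings represent the same permutation. Applying Edelman--Greene insertion to these pipe dreams produces a fixed insertion tableau together with a recording tableau, and the recording tableaux range over a set of flagged semistandard tableaux of rectangular type. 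Finally, by the Lindstr\"om--Gessel--Viennot correspondence these flagged tableaux are in bijection with non-crossing $k$-tuples of lattice paths, that is, with the south-east-avoiding maximal fillings of the previous paragraph.

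Composing the three bijections yields the explicit map of Theorem~\ref{th:1}. The main obstacle is the middle step: verifying that the maximal north-east-avoiding fillings are precisely the reduced pipe dreams of a single permutation, and that Edelman--Greene insertion converts the bound $k$ on north-east chains into exactly the flag bounds cut out by the Dyck-fan encoding. Controlling the behaviour of the longest-chain statistic under insertion, so that the north-east condition on one side matches the south-east condition on the other, is where the real work lies; by comparison, the polygon and Dyck-path encodings of the first two steps should be routine once the shape $\Dnk$ and its indexing are set up correctly.
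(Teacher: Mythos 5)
Your outline follows essentially the same route as the paper: triangulations and fans are encoded as maximal north-east- and south-east-chain-avoiding fillings of a staircase shape, the north-east fillings are matched with reduced pipe dreams of the single permutation $\sigma_k(\lambda)$ (note the filling is the \emph{complement} of the crossing set, not the crossing set itself), Edelman--Greene insertion sends these to $k$-flagged tableaux of staircase (not rectangular) shape, and subtracting $i$ from the entries of row $i$ yields the reverse plane partition, i.e.\ the $k$-fan of Dyck paths. The steps you defer as ``the crux'' are exactly the ones the paper supplies: the pipe-dream identification rests on the Knutson--Miller antidiagonal description together with the computation that the antidiagonals of $\sigma_k(\lambda)$ are precisely the $(k+1)$-north-east chains in $\lambda$, and the flag bound on recording tableaux follows from $\sigma_k(\lambda)$ fixing $1,\dots,k$ and having a dominant diagram with corner at $(k+1,k+1)$, which also guarantees that all reduced words share one insertion tableau.
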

		A \Dfn{north-east chain} of length $\ell$ in a Ferrers shape $\lambda$ is a sequence of $\ell$ boxes in $\lambda$ such that every box in the sequence is strictly north and strictly east of the preceding one, and for which the smallest rectangle containing all boxes in the sequence is also contained in $\lambda$. A \Dfn{$k$-north-east filling} of $\lambda$ is a $(0,1)$-filling which does not contain any north-east chain of $1$'s of length $k+1$, and in which the number of $1$'s is maximal. As usual, we identify a $(0,1)$-filling with its set of boxes filled with $1$'s and draw them by marking its set of boxes by $+$'s. See Figure~\ref{fig:pipe dream}(a) for an example. The set of all $k$-north-east fillings of $\lambda$ is denoted by $\Fne(\lambda,k)$. \Dfn{South-east chains}, \Dfn{$k$-south-east fillings} and $\Fse(\lambda,k)$ are defined similarly.

		It is well known that $k$-triangulations of the $n$-gon can be seen as $k$-north-east fillings of the staircase shape $(n-1,\ldots,2,1)$, and furthermore, $k$-fans of Dyck paths of length $2(n-2k)$ can be seen as $k$-south-east fillings of the same staircase (see e.g. \cite{Kra2006, Rub2006}). Thus, the second main theorem is a clear extension of the first. It answers a questions raised by C.~Krattenthaler in \cite{Kra2006}.
		\begin{theorem}\label{th:2}
			Let $\lambda$ be a Ferrers shape and let $k$ be a positive integer. There is an explicit bijection between $k$-north-east and $k$-south-east fillings of $\lambda$.
		\end{theorem}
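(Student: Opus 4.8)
The plan is to realize \emph{both} sides of the desired bijection inside a single model---reduced pipe dreams---and to exhibit the passage from north-east to south-east fillings as a transparent symmetry of that model. The conceptual starting point is that reflecting a filling of $\lambda$ in a vertical axis interchanges the two chain directions: reversing the order of the columns turns the relation ``as the column index increases, the row index decreases'' (which defines a north-east chain) into ``as the column index increases, the row index increases'' (a south-east chain), and conversely. The reflected diagram $\widetilde\lambda$ is no longer left-justified, but it is still a moon polyomino, and, decisively, it carries exactly the same multiset of column heights as $\lambda$. This reflection is a trivial but shape-changing bijection
\[
  \Fse(\lambda,k)\;\xrightarrow{\ \sim\ }\;\Fne(\widetilde\lambda,k),
\]
so the whole problem is reduced to comparing maximal north-east fillings of two moon polyominoes that differ only by the order of their columns.

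First I would install the engine advertised in the abstract: a canonical bijection $\Phi_M\colon \Fne(M,k)\to\RP(\sigma)$ between the maximal north-east fillings of a moon polyomino $M$ and the reduced pipe dreams of a permutation $\sigma=\sigma(M,k)$ determined by $M$ and $k$. For a Ferrers shape this is set up by embedding $\lambda$ in a triangular array and placing an elbow tile on each filled box and a crossing tile on each empty box, so that the avoided north-east chains force the pipes to realize a fixed permutation $\sigma$, while maximality of the number of $1$'s is precisely minimality of the number of crossings, i.e.\ reducedness; thus $\ell(\sigma)$ equals the number of empty boxes. Granting $\Phi$, Theorem~\ref{th:2} follows once we understand how $\sigma$ responds to reversing the column order.

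The crux is the identification $\sigma(\widetilde\lambda,k)=\sigma(\lambda,k)^{-1}$. I would prove this by tracking the pipes: reversing the columns of $M$ amounts, at the level of the triangular array, to transposing the pipe-dream diagram across its main diagonal, and transposition is exactly the classical involution that sends a reduced pipe dream of $w$ to a reduced pipe dream of $w^{-1}$. Since transposition is a bijection $\RP(\sigma)\to\RP(\sigma^{-1})$, composing the four maps
\[
  \Fne(\lambda,k)\xrightarrow{\ \Phi_\lambda\ }\RP(\sigma)\xrightarrow{\ \mathrm{transpose}\ }\RP\bigl(\sigma^{-1}\bigr)=\RP\bigl(\sigma(\widetilde\lambda,k)\bigr)\xrightarrow{\ \Phi_{\widetilde\lambda}^{-1}\ }\Fne(\widetilde\lambda,k)\xrightarrow{\ \sim\ }\Fse(\lambda,k)
\]
produces the explicit bijection claimed. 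Specializing $\lambda$ to the staircase---where $\widetilde\lambda$ is again a staircase and the construction becomes self-dual---recovers Theorem~\ref{th:1} and serves as a consistency check on the identification of $\sigma$.

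The main obstacle I anticipate is the middle identity, namely that column reversal of the moon polyomino is intertwined by $\Phi$ with pipe-dream transposition, and hence inverts $\sigma$. This requires showing that $\Phi_M$ is sufficiently natural---that the tile pattern and the reading order it uses are genuinely compatible with the diagonal reflection---rather than merely that the two filling sets have equal cardinality. A secondary technical point is verifying throughout that reducedness and maximality are preserved: one must check that the pipe dreams produced from maximal chain-avoiding fillings attain the minimal number of crossings $\ell(\sigma)$, and that transposition never creates a non-reduced diagram. Once these are in hand, reversibility of every arrow in the composition is automatic, and the resulting bijection is both explicit and manifestly compatible with the Schubert-polynomial structure exploited elsewhere in the paper.
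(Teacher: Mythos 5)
Your opening reduction is sound: reflecting $\lambda$ in a vertical axis does carry $k$-south-east fillings of $\lambda$ bijectively onto $k$-north-east fillings of the column-reversed stack polyomino $\widetilde\lambda$, since the reflection exchanges the two chain directions and preserves the rectangle condition. The gap is in the step you yourself flag as the crux. Column reversal is a reflection in a \emph{vertical} axis, whereas transposition of a pipe dream is reflection in the main \emph{diagonal}; these are not the same operation. Transposition sends $\RP(\sigma_k(\lambda))=\RP(\sigma_k(\lambda),\lambda)$ to $\RP(\sigma_k(\lambda)^{-1},\lambda')$, where $\lambda'$ is the \emph{conjugate} shape, and under the complementary map this is just the tautological bijection $\Fne(\lambda,k)\cong\Fne(\lambda',k)$ obtained by transposing the filling --- it preserves the north-east chain direction and says nothing about south-east chains. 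Correspondingly, the identity $\sigma(\widetilde\lambda,k)=\sigma(\lambda,k)^{-1}$ is false: the essential set of $\sigma_k(\lambda)^{-1}$ is the transpose of that of $\sigma_k(\lambda)$, with every box labelled $k$, whereas running the moon-polyomino recipe on the maximal rectangles of $\widetilde\lambda$ (north-west corners $(1,t)$, south-east corners in the last column) places every box of the essential set of $\sigma_k(\widetilde\lambda)$ in the last column with labels $k, k+1, k+2,\dots$ growing with $t$. There is also a quantitative mismatch: since $\widetilde\lambda$ is not a Ferrers shape, $\Fne(\widetilde\lambda,k)$ is in bijection only with the pipe dreams \emph{living inside} $\widetilde\lambda$, in general a proper subset of $\RP(\sigma_k(\widetilde\lambda))$, so any bijection onto a full set of reduced pipe dreams overshoots.

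What remains after your correct first reduction --- an explicit bijection between the $k$-north-east fillings of a stack polyomino and of the Ferrers shape obtained by sorting its columns --- is precisely J.~Jonsson's column-rearrangement theorem, which is known as an equality of cardinalities but for which no explicit bijection is given here (this is why Corollary~\ref{cor:3} asserts only monomial positivity of a difference of Schubert polynomials rather than an inclusion of pipe dreams). So your plan reduces Theorem~\ref{th:2} to a bijective problem at least as hard as the original. The paper's route is entirely different: after the complementary map to $\RP(\sigma_k(\lambda))$ it reads off the compatible sequence, applies column Edelman--Greene insertion to obtain a $k$-flagged tableau of shape $\D(\sigma_k(\lambda))$, subtracts $i$ from the entries in row $i$ to get a reverse plane partition of height $k$, interprets that as a $k$-fan of noncrossing lattice paths, and finally shifts the paths into $\lambda$ to produce the south-east filling. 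If you wish to keep a reflection-based strategy, you must supply, as a separate and substantial ingredient, an explicit bijection realizing Jonsson's invariance under column rearrangement.
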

		The constructed bijection goes through two intermediate objects, namely through pipe dreams and flagged tableaux, both arising in the theory of Schubert polynomials. The third main theorem is a central step in the proof of Theorem~\ref{th:2} and it concerns the connection between north-east chains and reduced pipe dreams.
		\begin{theorem}\label{th:3}
			Let $\lambda$ be a Ferrers shape and let $k$ be a positive integer. There exists a canonical bijection between $k$-north-east fillings of $\lambda$ and reduced pipe dreams of a permutation depending on $\lambda$ and $k$.
		\end{theorem}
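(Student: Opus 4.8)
The plan is to realize both families as $(0,1)$-fillings of a single ambient staircase, so that the asserted correspondence becomes, after a fixed change of coordinates, an essentially position-preserving map: a $1$ of the filling becomes a crossing of the pipe dream and a $0$ becomes an elbow. The first task is to attach to the pair $(\lambda,k)$ the relevant permutation $w=w(\lambda,k)$. I would read off the column heights of $\lambda$, increase each by $k$, and take these shifted heights to be (a rearrangement of) the Lehmer code of $w$; the effect is that the Rothe diagram of $w$ consists of a copy of $\lambda$ sitting above a forced width-$k$ staircase, and the extra $k$ in each column is exactly what will encode the permitted chain length. It is this shift that links the combinatorial parameter $k$ with the algebraic length of $w$.

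Before building the map I would pin down the structure of an arbitrary $F\in\Fne(\lambda,k)$. The key preparatory fact is that maximality forces the number of $1$'s to be constant across $\Fne(\lambda,k)$: by the Greene--Kleitman-type theorem for fillings of Ferrers (indeed moon) shapes, this common number equals $|\lambda|$ minus an explicit quantity determined by $\lambda$ and $k$. I would then identify this number with the Coxeter length $\ell(w)$, i.e.\ with the number of crossings in any reduced pipe dream of $w$. Matching these two counts is what makes a position-preserving bijection possible in principle and concentrates all remaining content into a single local equivalence.

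Next comes the construction together with the proof that it lands among reduced pipe dreams of $w$. Given $F$, I place a crossing in every cell of the forced width-$k$ staircase and in every cell of the embedded $\lambda$ carrying a $1$, and an elbow in every remaining cell. The crux of the argument is the equivalence: the resulting tiling is reduced, meaning no two pipes cross twice, precisely when $F$ has no north-east chain of $1$'s of length $k+1$. To see this I would trace pipes through the staircase: each pipe acquires $k$ crossings with a fixed bundle of neighbouring pipes, and a $1$ in $\lambda$ that completes a chain of length $k+1$ forces one of those pipes to meet a bundle pipe a second time; conversely a repeated crossing, traced back through the two arcs joining the two meeting points, produces a north-east chain of length exceeding $k$. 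Since $F$ is maximal, the tiling carries exactly $\ell(w)$ crossings, so once it is known to be reduced it must represent $w$ and not a shorter permutation---this is where the length count of the previous step is used.

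The remaining, and I expect hardest, step is invertibility. One must show that every reduced pipe dream of $w$ has its crossings covering the entire forced staircase, so that deleting that fixed part returns a genuine filling of $\lambda$, and that no reduced pipe dream of $w$ places a crossing strictly outside the region corresponding to $\lambda$. The forced occupation of the staircase should follow from the chosen code together with the standard fact that every reduced pipe dream contains the dominant crossings of its permutation; excluding stray crossings outside $\lambda$ is the delicate point, and I would dispatch it by running the pipe-tracing argument in reverse, showing that a stray crossing either produces a double crossing, contradicting reducedness, or changes the permutation, contradicting the length count. Once the image is characterized in this way, maximality of the recovered filling is automatic from the common value $\ell(w)$, and the map is a bijection, proving the theorem.
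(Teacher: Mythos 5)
There is a genuine gap: your map is set up with the complementation backwards, and this is fatal rather than a matter of convention. In the correct correspondence a $1$ of the filling becomes an \emph{elbow} and a $0$ becomes a \emph{crossing}, so that the crossing set of the pipe dream is the \emph{complement} of the filling inside $\lambda$. The reason is structural: by the Knutson--Miller antidiagonal characterization, the crossing sets of reduced pipe dreams for $\sigma$ are exactly the \emph{minimal transversals} of a prescribed family of north-east chains, and one chooses the permutation (here $\sigma_k(\lambda)$, whose Rothe diagram is $\lambda$ with its first $k$ rows and columns deleted --- not $\lambda$ augmented by a forced staircase) so that this family is precisely the set of $(k+1)$-north-east chains of $\lambda$; the complements of minimal transversals are then the maximal chain-avoiding sets, i.e.\ the elements of $\Fne(\lambda,k)$. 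Your map instead sends the $1$'s themselves (plus a fixed forced region) to crossings, i.e.\ it tries to realize the \emph{facets} of the subword complex, rather than their complements, as reduced pipe dreams of a single permutation. This already fails for $\lambda=(2,2)$, $k=1$: the two maximal fillings are $\lambda\setminus\{(1,2)\}$ and $\lambda\setminus\{(2,1)\}$, and reading their boxes as crossings (box $(i,j)\mapsto s_{i+j-1}$, rows read east to west, north to south) yields the words $s_1s_3s_2$ and $s_2s_1s_3$, which are reduced words for \emph{different} permutations; appending or prepending any fixed forced set of crossings multiplies both by the same element and cannot make them equal. By contrast the two complements $\{(1,2)\}$ and $\{(2,1)\}$ are exactly the two reduced pipe dreams of $\sigma_1((2,2))=[1,3,2]$.

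There is also an internal inconsistency in your counting step. You identify $\ell(w)$ with the number of $1$'s of a filling, which is $|\lambda|-\ell(\sigma_k(\lambda))$, yet your proposed crossing set has cardinality (number of $1$'s) $+$ (size of the forced staircase), and your proposed Rothe diagram for $w$ ($\lambda$ together with that staircase) has $|\lambda|+(\text{size of the forced staircase})$ boxes; these three quantities cannot all coincide once $\lambda$ contains a single $(k+1)$-chain. The pipe-tracing heuristic in your third paragraph (a repeated crossing yields a long north-east chain and conversely) is the right local picture, but it must be applied to the $0$'s, and the clean way to make it rigorous is exactly the transversal/essential-set argument above: one checks that the antidiagonal collection $\mathcal{A}_\sigma$ of $\sigma=\sigma_k(\lambda)$, computed from the essential set of $\sigma$, consists precisely of the north-east chains of length $k+1$ in $\lambda$, after which the bijection is literally complementation and no separate surjectivity or ``no stray crossings'' argument is needed (the containment $\RP(\sigma)=\RP(\sigma,\lambda)$ falls out of the same computation).
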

		This bijection will be described in Section~\ref{sec:ne to pipe}. A variation of the argument gives the following generalization to moon polyominoes as defined in Section~\ref{sec:moon}.
		\begin{theorem}\label{th:4}
			Let $M$ be a moon polyomino and let $k$ be a positive integer. Then there exists a canonical bijection between $k$-north-east fillings of $M$ and reduced pipe dreams (of a given permutation) living inside~$M$.
		\end{theorem}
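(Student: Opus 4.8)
The plan is to run the construction of Section~\ref{sec:ne to pipe} directly inside $M$, rather than to first reduce $M$ to a Ferrers shape. That construction is purely local: box by box it reads off a crossing tile or an elbow tile from the filling, according to the convention fixed there, and in this form it makes sense inside any polyomino, in particular inside a moon polyomino as defined in Section~\ref{sec:moon}. First I would attach to the pair $(M,k)$ a permutation $\sigma$ by reading the south-east boundary word of $M$ and incorporating $k$ exactly as in the Ferrers case. The assertion to prove is then that the local rule carries $\Fne(M,k)$ bijectively onto the set of reduced pipe dreams of $\sigma$ living inside $M$.

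The core is to show that the output of the local rule is always a reduced pipe dream of $\sigma$. Passing to the associated wiring diagram, each box carrying a crossing lets two pipes cross and each box carrying an elbow lets them turn, and the realised permutation is read off from the endpoints. Since a pipe dream is reduced precisely when no two pipes cross twice, the decisive step is the dictionary: two pipes cross twice in the constructed diagram if and only if the filling contains a north-east chain of $1$'s of length $k+1$. Consequently, avoidance of such chains is equivalent to reducedness. Maximality of the filling then translates into the pipe dream realising $\sigma$ itself rather than a smaller permutation, so that maximal $k$-north-east fillings correspond exactly to reduced pipe dreams of $\sigma$.

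The inverse map is immediate from locality: from a reduced pipe dream inside $M$ one marks the filling according to the same convention, and the dictionary above shows the resulting filling avoids north-east chains of length $k+1$ and is maximal. That the two maps are mutually inverse then follows box by box, since neither the forward rule nor its inverse ever inspects more than a single box at a time.

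The main obstacle is verifying reducedness once the boundary is no longer monotone. For a Ferrers shape the south-east boundary descends monotonically, the pipes behave like a triangular sorting network, and the equivalence between double crossings and long north-east chains is visible directly. In a moon polyomino the boundary bulges in and out, and two pipes can enter and leave the region in a more intricate pattern. Here the defining properties of a moon polyomino are exactly what is required: row- and column-convexity guarantees that each pipe, once it enters $M$, traverses a contiguous strip, while the nestedness of the rows (equivalently of the columns) guarantees that any two pipes are comparable, so that a second crossing forces the local configuration to encode a north-east chain of length $k+1$. I would isolate this equivalence as a lemma; its proof is the one place where the moon hypothesis enters essentially, and relaxing either convexity or nestedness destroys the dictionary and hence the bijection.
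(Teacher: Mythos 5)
Your overall target is right, but two essential ingredients are missing, and the paper's proof consists precisely of supplying them. First, the permutation. For a moon polyomino that is not a Ferrers shape, ``reading the south-east boundary word of $M$ and incorporating $k$ as in the Ferrers case'' does not define the correct permutation: the diagram of $\sigma_k(M)$ is \emph{not} obtained from $M$ by deleting its first $k$ rows and columns, and indeed reduced pipe dreams for $\sigma_k(M)$ need not live inside $M$ at all (which is exactly why the theorem must restrict to $\RP(\sigma,M)$ rather than $\RP(\sigma)$, in contrast to Corollary~\ref{cor:living dream}). The paper defines $\sigma_k(M)$ through its essential set: each maximal rectangle of $M$ with north-west corner $(a+1,b+1)$ and south-east corner $(i,j)$, of width and height both exceeding $k$, contributes the box $(i+b,j+a)$ with label $a+b+k$, and one must verify (via \cite[2.2.8]{Man2001}) that a permutation with this essential set exists. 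Your proposal has no substitute for this construction, and without it the statement ``the pipe dream realises $\sigma$ itself'' has no content.

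Second, the lemma you defer is where the whole proof lives, and the form in which you state it is not the correct equivalence. In the Knutson--Miller/Jia--Miller characterization used by the paper, a subset of the staircase is a reduced pipe dream for $\sigma$ iff it meets every north-east chain of length $r_\sigma(p,q)+1$ in every rectangle $[p]\times[q]$ with $(p,q)$ in the essential set, and is minimal with this property; equivalently, its complement is a maximal filling avoiding those chains. The required chain lengths are $a+b+k+1$, not uniformly $k+1$, and the relevant rectangles $[p]\times[q]$ are not the maximal rectangles of $M$. The step that makes the theorem true --- and the only place the moon hypothesis enters --- is that intersection-freeness forbids any box of $M$ strictly south-west or strictly north-east of a maximal rectangle $R$, so a $(k+1)$-chain inside $R$ extends canonically, through the stars of $\sigma$ outside $M$, to an $(a+b+k+1)$-chain inside $[i+b]\times[j+a]$. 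Without this extension argument the equivalence between $(k+1)$-chain avoidance in $M$ and membership in $\RP(\sigma,M)$ is only asserted. Note also that your dictionary conflates two distinct failure modes: a $(k+1)$-chain of $1$'s in the filling corresponds to the complementary pipe dream \emph{missing} an antidiagonal (so it is not a pipe dream for $\sigma$), whereas a double crossing corresponds to non-maximality of the filling; the claim ``two pipes cross twice iff the filling contains a $(k+1)$-north-east chain'' is therefore not the statement you need to prove.
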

		We will use the construction to obtain new properties and simple proofs for known properties of $k$-north-east fillings and of $k$-triangula\-tions. In particular, we obtain the following corollaries.
		\begin{corollary}\label{cor:1}
			The simplicial complex with facets being $k$-north-east fillings of a moon polyomino $M$ is the join of a vertex-decomposable, triangulated sphere with a full simplex. In particular, it is shellable and Cohen-Macaulay.
		\end{corollary}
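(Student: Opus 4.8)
The plan is to transport the whole complex through Theorem~\ref{th:4} into the setting of pipe dreams and there to recognize it as a \emph{subword complex} in the sense of Knutson and Miller, whose combinatorics and topology are completely understood. Let $\pi$ be the permutation attached to $M$ and $k$ in Theorem~\ref{th:4}, and let $Q$ be the word in simple transpositions read off from the boxes of $M$ in the order underlying the pipe-dream model, each box contributing the generator dictated by that model. By Theorem~\ref{th:4} the $k$-north-east fillings of $M$ are exactly the reduced pipe dreams for $\pi$ living inside $M$, and under the standard correspondence between reduced pipe dreams and facets of a subword complex this realizes our complex as $\Delta(Q,\pi)$. The first task is to make this precise as an isomorphism of simplicial complexes: a partial filling should be a face of our complex exactly when, after deleting the corresponding letters, the word $Q$ still contains a reduced expression for $\pi$.

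Granting this identification, I would invoke the two structural results of Knutson and Miller. First, every subword complex is vertex-decomposable; since vertex-decomposability implies shellability and shellability implies Cohen-Macaulayness, the final sentence of the corollary follows at once. Second, $\Delta(Q,\pi)$ is homeomorphic to a sphere when the Demazure product $\delta(Q)$ equals $\pi$, and to a ball otherwise. To extract the stated join decomposition I would single out the cone points of $\Delta(Q,\pi)$ --- the boxes lying in every facet, i.e.\ the boxes forced to carry a $1$ in every $k$-north-east filling of $M$ --- which together span a full simplex. Taking successive links deletes exactly these letters and realizes the remaining complex, the restriction to the non-cone boxes, again as a subword complex $\Delta(Q^{\circ},\pi)$ on the ``active'' boxes. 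Since a join with a simplex preserves vertex-decomposability (and a simplex is trivially vertex-decomposable), it suffices to prove that $\Delta(Q^{\circ},\pi)$ is a vertex-decomposable sphere, and by the second Knutson--Miller result this reduces to the single equality $\delta(Q^{\circ})=\pi$.

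The crux, and the step I expect to be the main obstacle, is precisely this sphere-versus-ball point, and it is a genuine condition rather than a formality: a subword complex on a word every letter of which is actually used in some reduced subword may still be a ball that does \emph{not} split off as a simplex joined with a sphere, so the equality $\delta(Q^{\circ})=\pi$ must really be established. I would attack it directly from the explicit combinatorial description of $\pi$ coming from the construction underlying Theorem~\ref{th:4}, computing the Demazure product letter by letter along the reading order of the active boxes. As a more transparent alternative I would bypass the Demazure product and argue on the filling side: a shellable complex is a sphere exactly when it has empty boundary, i.e.\ when every ridge lies in exactly two facets, so it would suffice to prove the ``flip'' property that each near-maximal configuration completes to a maximal $k$-north-east filling in precisely two ways. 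Pinning down the exact set of cone points is then a secondary bookkeeping matter; with the equality $\delta(Q^{\circ})=\pi$ (equivalently, the flip property) in hand, vertex-decomposability, shellability, and Cohen-Macaulayness all follow formally from the Knutson--Miller theory together with the standard implications among these properties.
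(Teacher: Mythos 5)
Your route is the paper's route. The proof given there is essentially your first two paragraphs made concrete: label the box $(i,j)$ of $M$ by the simple transposition $s_{i+j-1}$ and read the boxes of $M$ row by row from east to west and from north to south to obtain the word $Q$; by Theorem~\ref{th:moon fillings-pipe dreams} the facets of the complex are the complements of reduced pipe dreams for $\sigma_k(M)$ living inside $M$, which identifies it with the subword complex $\Delta(Q,\sigma_k(M))$; the passive boxes (those lying in no $(k+1)$-north-east chain of $M$) are exactly the cone points; and the conclusion is drawn from Theorems~2.5 and~3.7 of \cite{KM2004}.

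The one place where your proposal stops short --- establishing $\delta(Q^{\circ})=\sigma_k(M)$ so that the link of the passive boxes is a sphere rather than a ball --- is indeed the only non-formal step, and you are right to insist it is a genuine condition: a subword complex all of whose letters are used in some reduced subword can still be a ball (e.g.\ $Q=(s_1,s_2,s_1,s_2)$ and $\pi=s_1s_2$ gives a $1$-ball with no cone points). The paper does not carry out this verification explicitly either; it is absorbed into the citation of \cite[Theorem~3.7]{KM2004} together with the precedent of \cite[Corollary~1.3]{Stu2010} for the staircase case. Note that the ``flip'' alternative you sketch is already available inside the paper: the mutation operation of Section~\ref{sec:mutation} shows that a facet $F(D)$ can be mutated at a vertex $b$ exactly when the two pipes of $D$ touching in $b$ form an inversion of $\sigma$, and the mutation partner is then unique, so sphericity of the link amounts to checking that for every facet and every non-passive box $b$ in it those two pipes do cross elsewhere. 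That check is what you would still need to supply to make your argument self-contained, but your proposal is otherwise a faithful reconstruction of the paper's proof.
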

		\begin{corollary}\label{cor:3}
			Let $S$ be a stack polyomino and $\lambda$ the Ferrers shape obtained from $S$ be properly rearranging its columns. Let $\sigma$ and $\tau$ be the associated permutations. Then the difference
			$$\mathfrak{S}_\sigma(x_1,x_2,\ldots) - \mathfrak{S}_\tau(x_1,x_2,\ldots)$$
			of Schubert polynomials is monomial positive.
		\end{corollary}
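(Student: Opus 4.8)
The plan is to pass to reduced pipe dreams via Theorem~\ref{th:4} and then reduce the statement to the existence of a weight-preserving injection. Recall the Billey--Jockusch--Stanley/Fomin--Kirillov formula $\mathfrak{S}_w(x_1,x_2,\ldots)=\sum_D x^D$, the sum running over the reduced pipe dreams $D$ of $w$, with $x^D=\prod_{(i,j)\in D}x_i$ recording for each cross the variable indexed by its \emph{row}. By Theorem~\ref{th:4} the reduced pipe dreams of $\sigma$ are canonically the elements of $\Fne(S,k)$ and those of $\tau$ the elements of $\Fne(\lambda,k)$; transporting the monomial weight along these bijections exhibits $\mathfrak{S}_\sigma$ and $\mathfrak{S}_\tau$ as the generating functions of $\Fne(S,k)$ and $\Fne(\lambda,k)$ for this weight. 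It therefore suffices to produce a weight-preserving injection $\Phi\colon\Fne(\lambda,k)\hookrightarrow\Fne(S,k)$: then $\mathfrak{S}_\sigma-\mathfrak{S}_\tau=\sum_F x^{F}$, summed over the pipe dreams of $\sigma$ not in the image of $\Phi$, which is a sum of monomials with nonnegative coefficients, hence monomial positive.

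The candidate for $\Phi$ is essentially forced by the weight. Since $S$ and $\lambda$ carry the same multiset of columns and $\lambda$ is obtained by sorting them, there is a permutation of the columns taking the one shape to the other; in particular $S$ and $\lambda$ have identical row widths. Applying this column rearrangement to a pipe dream permutes only the column index of each cross and leaves its row untouched, so it automatically preserves $x^D$ and is visibly injective on configurations. Consequently the entire content of the corollary is the assertion that this rearrangement carries a \emph{reduced} pipe dream of $\tau$ to a \emph{reduced} pipe dream of $\sigma$ (equivalently, that it sends a $k$-north-east filling of $\lambda$ to one of $S$), possibly after a local correction. The hypothesis that $\lambda$ is the fully sorted shape is essential here: a generic column permutation destroys both north-east-chain avoidance and maximality, so one cannot use the rearrangement verbatim.

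To establish this I would induct on the number of inversions of the sorting permutation, factoring the passage from $\lambda$ to $S$ as a sequence of elementary column interchanges through moon polyominoes (the intersection-free condition keeps any two columns nested, and by shifting columns vertically one can choose the order of interchanges so that every intermediate shape remains convex). By transitivity it then suffices to treat one interchange $M\to M'$ of two adjacent nested columns and to define, on fillings, a modification equal to the identity away from the two affected columns that keeps each entry in its row while repairing north-east-chain avoidance. Nestedness is what makes such a local repair available: the marked cells of the shorter column can be redistributed against the taller one in the way dictated by maximality and the chain bound, and one checks that this reassignment is reversible, hence injective.

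The main obstacle is precisely this local lemma: verifying that the single-interchange modification lands in $\Fne(M,k)$ --- that is, preserves both maximality and the absence of north-east chains of length $k+1$ --- while remaining injective and row-preserving. I expect the cleanest route is to phrase the modification directly on reduced pipe dreams, where reducedness and the identity of the target permutation $\sigma$ are supplied for free by Theorem~\ref{th:4}, so that the verification collapses to a finite check confined to the two interchanged columns, using the characterization of maximal fillings developed in Section~\ref{sec:ne to pipe}. Composing the injections obtained for the successive interchanges yields $\Phi$, and hence the monomial positivity of $\mathfrak{S}_\sigma-\mathfrak{S}_\tau$.
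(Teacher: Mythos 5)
Your overall frame agrees with the paper's: both arguments write $\mathfrak{S}_\sigma$ and $\mathfrak{S}_\tau$ as row-weighted generating functions of reduced pipe dreams, observe that $S$ and $\lambda$ have identical row lengths so that the monomial of a pipe dream depends only on the number of crossings in each row, and reduce monomial positivity to a row-weight-preserving comparison between $\Fne(\lambda,k)$ and $\Fne(S,k)$. One misstatement first: Theorem~\ref{th:4} identifies $\Fne(S,k)$ with $\RP(\sigma,S)$, the reduced pipe dreams of $\sigma$ \emph{living inside} $S$, not with all of $\RP(\sigma)$; the paper explicitly warns that Corollary~\ref{cor:living dream} fails for moon polyominoes that are not Ferrers shapes. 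This error happens to be harmless for the direction of your inequality (it only enlarges the set of leftover monomials contributing to the difference), but it should be corrected.

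The genuine gap is the ``local lemma'' you defer to the end: that an interchange of two adjacent nested columns admits a row-preserving injection on maximal fillings avoiding $(k+1)$-north-east chains. This is not a finite check confined to the two interchanged columns --- it is essentially the entire content of Jonsson's theorem, which the paper takes as a black box, citing \cite[Theorem~14]{J2005} (via Corollary~\ref{cor:2}): the number of $k$-north-east fillings of $S$ with a prescribed number of $+$'s in every row equals the corresponding number for $\lambda$, which yields a row-weight-preserving bijection, not merely an injection. Your sketch of the repair (``redistribute the marked cells of the shorter column against the taller one'') does not specify a map, and the obvious candidates fail: a new $(k+1)$-chain created by the swap may use one box from a modified column and $k$ boxes from unmodified columns, and restoring maximality can force changes outside the two affected columns, so the modification is not local in the sense you require. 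Unless you import Jonsson's theorem (or its later jeu-de-taquin-style refinements), the proof is incomplete precisely at its mathematical core; with that citation in hand, the rest of your argument goes through and coincides with the paper's.
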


		The bijection for $k$-triangulations has the additional property that the cyclic action given by rotation of the $n$-gon corresponds to a promotion-like operation on flagged tableaux and thus transforms a conjectured cyclic sieving phenomenon (CSP) into the context of $k$-flagged tableaux.
		\begin{conjecture}\label{con:CSP2}
			Let $\FT(\lambda,k)$ be the set of $k$-flagged tableaux and let $\rho$ be the promotion-like cyclic action on $\FT(\lambda,k)$. The triple
			$$\Big( \FT(\lambda,k), \langle \rho \rangle, F(q) \Big),$$
			exhibits the CSP, where
				$$F(q) := \prod_{1 \leq i \leq j < n-2k} \frac{[i+j+2k]_q}{[i+j]_q}$$
				is a natural $q$-analogue of the cardinality of $\Fne(\lambda,k)$.
		\end{conjecture}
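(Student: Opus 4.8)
The plan is to verify directly the two defining conditions of the cyclic sieving phenomenon for the triple $\big(\FT(\lambda,k),\langle\rho\rangle,F(q)\big)$. Let $N$ denote the order of $\rho$ and $\zeta$ a primitive $N$-th root of unity; the CSP asserts that for every $d$ the number of $\rho^d$-fixed $k$-flagged tableaux equals $F(\zeta^d)$. The case $d\equiv 0$ reduces to the normalization $F(1)=|\FT(\lambda,k)|$, and this I would settle first. At $q=1$ the product collapses to Jonsson's closed form $\prod_{1\le i\le j<n-2k}\frac{i+j+2k}{i+j}$ for the number of $k$-triangulations of the $n$-gon; since the bijections underlying Theorems~\ref{th:1}--\ref{th:4} identify $\FT(\lambda,k)$ with $k$-fans of Dyck paths, hence with $k$-triangulations, this gives $F(1)=|\FT(\lambda,k)|$ at once.

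The substance lies in the fixed-point count for $d\not\equiv 0$, and here I see two complementary routes. The first is to encode $k$-flagged tableaux---equivalently, through the bijections of the paper, $k$-fans of Dyck paths---as standard Young tableaux of a single rectangular shape $R$, chosen so that the natural $q$-hook-length polynomial of $R$ reproduces $F(q)$ and so that $\rho$ is carried to a power of Schützenberger promotion. Rhoades' cyclic sieving theorem for promotion on rectangular standard Young tableaux would then apply, once the shape $R$ and the relevant power of promotion have been pinned down. The second route is the representation-theoretic criterion of Reiner--Stanton--White: realize $F(q)$ either as a principal specialization of a flagged Schur function or as the graded trace of $\rho$ on a suitable cyclic-group module, reduce $F(q)$ modulo $q^{N}-1$, and match the residues against the cycle structure of the permutation induced by $\rho$ on $\FT(\lambda,k)$.

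The main obstacle is to make the equivariance sharp enough to feed into one of these machines. One must prove that, under the chosen encoding, $\rho$ is genuinely conjugate to a power of promotion, and that the order $N$---inherited from rotation of the $n$-gon, hence dividing $n$---agrees with the order of that promotion on $R$; the matching of these two cyclic actions is exactly the delicate point, since the rotation of a $k$-fan (or of a $k$-triangulation) does not obviously linearize for $k>1$. Even granting the encoding, the residual statement is a bona fide cyclic sieving phenomenon: the fixed-point enumeration of $\rho^d$ resists a purely bijective treatment and appears to require either Rhoades-type Kazhdan--Lusztig cellular representation theory transported to this setting, or an independent evaluation of $F(\zeta^d)$ matched against a separately computed fixed-point formula. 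I therefore expect the crux to be the construction of the rectangular promotion model (or the cyclic module) underlying the phenomenon, with the normalization $F(1)=|\FT(\lambda,k)|$ and the compatibility with $n$-gon rotation serving only as consistency checks.
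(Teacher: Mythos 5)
This statement is a \emph{conjecture}: the paper offers no proof of it, and its only result in this direction is the equivalence with Reiner's Conjecture~\ref{con:CSP1}, obtained by showing that the composite bijection from $k$-triangulations to $k$-flagged tableaux intertwines rotation of the $n$-gon with flagged promotion. Your proposal does not prove the conjecture either. The only step you actually carry out is the $q=1$ normalization $F(1)=|\FT(\lambda,k)|$, which follows from the known product formula for the number of $k$-triangulations together with the paper's bijections; that is the trivial part of any CSP claim. Everything of substance --- the evaluation of the fixed-point counts $|\FT(\lambda,k)^{\rho^d}|$ for $d\not\equiv 0$ and their matching with $F(\zeta^d)$ --- is deferred to ``two complementary routes,'' neither of which is executed: the rectangular shape $R$ on which Rhoades' theorem would be invoked is never identified, the conjugacy of $\rho$ with a power of Sch\"utzenberger promotion on standard tableaux of shape $R$ is never established, and no cyclic module realizing $F(q)$ as a graded trace is constructed. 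You say yourself that this is ``the crux''; conceding the crux is not a proof.

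Two further cautions. First, even the order of $\rho$ is not pinned down in your sketch: you assert it divides $n$ by appeal to rotation, but on the flagged-tableau side this is exactly the content of the paper's equivariance theorem, which requires the Edelman--Greene/promotion compatibility argument and is not free. Second, the obvious candidate for your first route --- identifying $k$-fans of Dyck paths in the staircase with standard Young tableaux of a $k\times(n-2k)$-type rectangle so that Rhoades' CSP applies --- is precisely the approach that is not known to work here, because rotation of the $n$-gon does not correspond to promotion on any such rectangle in an evident way for $k>1$; this is why the statement remains open. If you want a provable statement in the spirit of your write-up, the correct target is the paper's actual result: that Conjecture~\ref{con:CSP2} is equivalent to Conjecture~\ref{con:CSP1}, which requires only the equivariance of the bijection and no fixed-point enumeration.
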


	\section{From north-east fillings to pipe dreams}\label{sec:ne to pipe}
		In this section we exhibit a connection between $k$-north-east fillings of Ferrers shapes as well as of stack and moon polyominoes on the one hand  and reduced pipe dreams on the other. This generalizes a construction by the second author for $k$-triangulations \cite{Stu2010}, and by Pilaud and Pocchiola~\cite{Pil2010,PP2010}, where they refer to pipe dreams as pseudoline arrangements.

		Reduced pipe dreams (or $rc$-graphs) were introduced by S.~Fomin and A.~Kirillov in \cite{FK1996} (see also work of N.~Bergeron and S.~Billey \cite{BB1993}). They play a central role in the combinatorics of Schubert polynomials of A.~Lascoux and M.-P.~Sch\"utzenberger. A \Dfn{pipe dream} of size $n$ is a filling of the staircase shape $(n-1,\ldots,2,1)$ where each box contains two crossing pipes $\textcross$ or two turning pipes $\textelbow$. See Figure~\ref{fig:pipe dream}(b) for an example. A pipe dream is identified with its set of boxes containing two crossing pipes $\textcross$.
		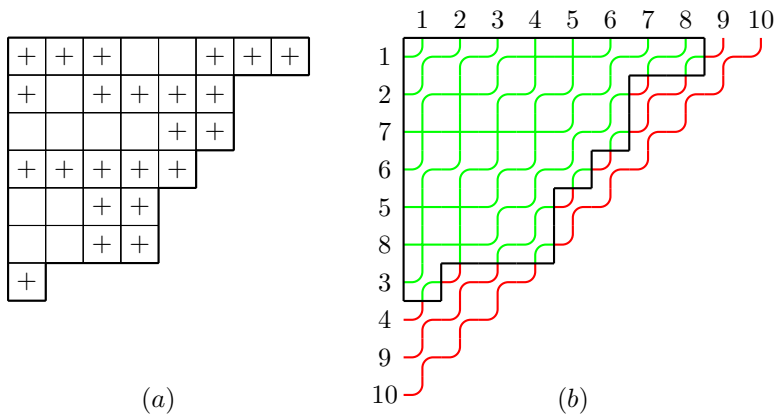
\begin{figure}
			\centering
			$\begin{array}{ccc}
			\begin{tikzpicture}[scale=1]
				\boxcollection{0.5}{(0,0)}{
					0/0/$+$,1/0/$+$,2/0/$+$,3/0/,4/0/,5/0/$+$,6/0/$+$,7/0/$+$,
					0/1/$+$,1/1/,2/1/$+$,3/1/$+$,4/1/$+$,5/1/$+$,
					0/2/,1/2/,2/2/,3/2/,4/2/$+$,5/2/$+$,
					0/3/$+$,1/3/$+$,2/3/$+$,3/3/$+$,4/3/$+$,
					0/4/,1/4/,2/4/$+$,3/4/$+$,
					0/5/,1/5/,2/5/$+$,3/5/$+$,
					0/6/$+$}
				\latticepath{0.5}{(0,-3)}{thick}{
					1/0/black,0/1/black,1/0/black,1/0/black,1/0/black,0/1/black,0/1/black,1/0/black,0/1/black,1/0/black,0/1/black,0/1/black,1/0/black,1/0/black,0/1/black,-8/0/black,0/-7/black}
				\content{0.5}{(0,-4.15)}{
					0/0/}
			\end{tikzpicture}
			&&
			\begin{tikzpicture}[scale=1]
				\tpipedream{0.5}{(0,0)}{
					0/0/green/green,1/0/green/green,2/0/green/green,5/0/green/green,6/0/green/green,7/0/green/green,8/0/red/red,9/0/red/white,
					0/1/green/green,2/1/green/green,3/1/green/green,4/1/green/green,5/1/green/green,6/1/red/red,7/1/red/red,8/1/red/white,
					4/2/green/green,5/2/green/green,6/2/red/red,7/2/red/white,
					0/3/green/green,1/3/green/green,2/3/green/green,3/3/green/green,4/3/green/green,5/3/red/red,6/3/red/white,
					2/4/green/green,3/4/green/green,4/4/red/red,5/4/red/white,
					2/5/green/green,3/5/green/green,4/5/red/white,
					0/6/green/green,1/6/red/red,2/6/red/red,3/6/red/white,
					0/7/red/red,1/7/red/red,2/7/red/white,
					0/8/red/red,1/8/red/white,
					0/9/red/white}
				\cpipedream{0.5}{(0,0)}{
					3/0/green/green,4/0/green/green,
					1/1/green/green,
					0/2/green/green,1/2/green/green,2/2/green/green,3/2/green/green,
					0/4/green/green,1/4/green/green,
					0/5/green/green,1/5/green/green}
				\latticepath{0.5}{(0,-3)}{thick}{
					1/0/black,0/1/black,1/0/black,1/0/black,1/0/black,0/1/black,0/1/black,1/0/black,0/1/black,1/0/black,0/1/black,0/1/black,1/0/black,1/0/black,0/1/black,-8/0/black,0/-7/black}
				\content{0.5}{(0,1)}{
					0/0/$1$,1/0/$2$,2/0/$3$,3/0/$4$,4/0/$5$,5/0/$6$,6/0/$7$,7/0/$8$,8/0/$9$,9/0/$10$,
					-1/1/$1$,-1/2/$2$,-1/3/$7$,-1/4/$6$,-1/5/$5$,-1/6/$8$,-1/7/$3$,-1/8/$4$,-1/9/$9$,-1/10/$10$}
			\end{tikzpicture}
			\\[-15pt]
			(a) && (b)
			\end{array}$
			\caption{A $2$-north-east filling of $\lambda = (8,6,6,5,4,4,1)$ and its associated reduced pipe dream.}
			\label{fig:pipe dream}
		\end{figure}
		The permutation $\pi(D)$ of a pipe dream $D$ is obtained by following the pipes starting from the top and going all the way to the left, and then reading $\pi(D)$ on the left from top to bottom in one line notation. For example, the permutation of the pipe dream in Figure~\ref{fig:pipe dream}(b) is $[1,2,7,6,5,8,3,4,9,10]$. A pipe dream is \Dfn{reduced} if two pipes cross at most once. We say that a pipe dream \Dfn{lives inside} a set $M$ of boxes in the staircase shape if all its crossings are contained in $M$. For a given permutation $\pi$ and a set $M$ of boxes, denote the set of reduced pipe dreams for $\pi$ by $\RP(\pi)$ and the set of reduced pipe dreams for $\pi$ which live inside $M$ by $\RP(\pi,M)$.

		\subsection{A bijection between north-east fillings and reduced pipe dreams}
		Starting with a $k$-north-east filling of $\lambda$, one obtains a pipe dream by replacing every $1$ by two turning pipes and every $0$ by two crossing pipes. Afterwards, $\lambda$ is embedded into the smallest staircase containing it, and all boxes in the staircase outside of $\lambda$ are replaced by turning pipes. In other words, a $k$-north-east filling of $\lambda$ and its associated pipe dream are complementary $(0,1)$-fillings of $\lambda$ when both are identified with their sets of boxes. For example, the $\textcross$'s in the pipe dream in Figure~\ref{fig:pipe dream}(b) and the marked boxes in (a) are complementary $(0,1)$-fillings of $\lambda$. The pieces in boxes outside of $\lambda$ are drawn in the pipe dream in {\color{red}red} whereas pieces within $\lambda$ are drawn in {\color{green}green}. We call this identification between $k$-north-east fillings of $\lambda$ and reduced pipe dreams \Dfn{complementary map}.
		
		For a permutation $\sigma \in \S{n}$, define its (\Dfn{Rothe}) \Dfn{diagram} (see \cite[Section~2.1]{Man2001}) to be the set of boxes in the staircase shape given by
		$$\D(\sigma) := \big\{ (i,\sigma_j) : i < j, \sigma_i > \sigma_j \big\}.$$
		For example, the diagram of $[1,2,7,6,5,8,3,4,9,10]$ in Figure~\ref{fig:permutation} is given by the shaded area. Clearly, the number of boxes in $\D(\sigma)$ equals the \Dfn{length} of $\sigma$, i.e., the minimal number of simple transpositions needed to write $\sigma$. A permutation is called \Dfn{dominant} if its diagram is a Ferrers shape containing the box $(1,1)$. By construction, different permutations in $\S{n}$ have different shapes and one can obtain every Ferrers shape in this way for some $n$. Thus, starting with a Ferrers shape $\lambda$, let $\sigma(\lambda)$ be the unique dominant permutation $\sigma \in \S{n}$ for which $\D(\sigma) = \lambda$, where $n$ is given by the size of the smallest staircase shape containing $\lambda$. Moreover, define $\sigma_k(\lambda)$ to be
		$$\1{k} \times \tau := [1,2,\ldots,k, \tau_1+k, \ldots, \tau_n+k] \in \S{n+k}$$
		where $\tau = \sigma(\mu)$ and $\mu$ is obtained from $\lambda$ by removing its first $k$ rows and columns. Graphically, this means that $\sigma_k(\lambda)$ is obtained by removing the first $k$ columns and rows from $\sigma(\lambda)$. Note that the north-west corner of $\sigma_k(\lambda)$ remains in box $(k+1,k+1)$. See Figure~\ref{fig:permutation} for $\sigma_2(\lambda)$ with $\lambda$ as in Figure~\ref{fig:pipe dream}.
		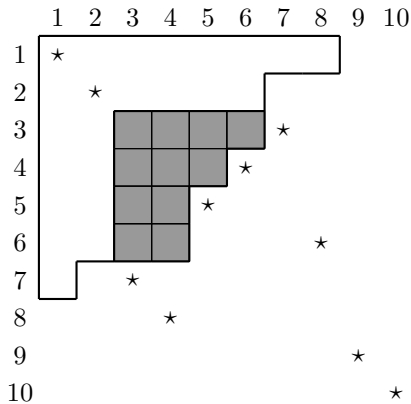
\begin{figure}
			\begin{tikzpicture}[scale=1]
				\bgboxcollection{0.5}{(0,0)}{
					2/2//lightgrey,3/2//lightgrey,4/2//lightgrey,5/2//lightgrey,
					2/3//lightgrey,3/3//lightgrey,4/3//lightgrey,
					2/4//lightgrey,3/4//lightgrey,
					2/5//lightgrey,3/5//lightgrey}
				\latticepath{0.5}{(0,-3)}{thick}{
					1/0/black,0/1/black,1/0/black,1/0/black,1/0/black,0/1/black,0/1/black,1/0/black,0/1/black,1/0/black,0/1/black,0/1/black,1/0/black,1/0/black,0/1/black,-8/0/black,0/-7/black}
				\latticepath{0.5}{(1,-2.5)}{thick}{
					0/1/black,0/1/black,0/1/black,0/1/black,1/0/black,1/0/black,1/0/black,1/0/black}
				\content{0.5}{(0,1)}{
					0/0/$1$,1/0/$2$,2/0/$3$,3/0/$4$,4/0/$5$,5/0/$6$,6/0/$7$,7/0/$8$,8/0/$9$,9/0/$10$,
					-1/1/$1$,-1/2/$2$,-1/3/$3$,-1/4/$4$,-1/5/$5$,-1/6/$6$,-1/7/$7$,-1/8/$8$,-1/9/$9$,-1/10/$10$,
					0/1/$\star$,1/2/$\star$,6/3/$\star$,5/4/$\star$,4/5/$\star$,7/6/$\star$,2/7/$\star$,3/8/$\star$,8/9/$\star$,9/10/$\star$}
			\end{tikzpicture}
			\caption{$\sigma_2(\lambda) = [1,2,7,6,5,8,3,4,9,10]$ for $\lambda = (8,6,6,5,4,4,1)$.}
			\label{fig:permutation}
		\end{figure}
		The following theorem is a more precise reformulation of Theorem~\ref{th:3}.
		\begin{theorem}\label{th:fillings-pipe dreams}
			Let $\lambda$ be a Ferrers shape and let $\sigma = \sigma_k(\lambda)$. The complementary map from $k$-north-east fillings to pipe dreams is a bijection between $\Fne(\lambda,k)$ and~$\RP(\sigma)$.
		\end{theorem}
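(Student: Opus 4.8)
The plan is to exploit that the complementary map is, at the level of underlying sets, nothing but the involution $F \mapsto \lambda \setminus F$ on subsets of the boxes of $\lambda$: a filling $F$ is sent to the pipe dream $D_F$ whose crossings are exactly the boxes of $\lambda$ carrying a $0$, all remaining boxes of the ambient staircase being elbows. This is manifestly a bijection between $(0,1)$-fillings of $\lambda$ and pipe dreams all of whose crossings lie inside $\lambda$, so injectivity is automatic and the theorem reduces to the single set equality $\{D_F : F \in \Fne(\lambda,k)\} = \RP(\sigma)$. I would record at the outset that $\ell(\sigma) = |\D(\sigma)| = |\mu|$, where $\mu$ is $\lambda$ with its first $k$ rows and columns deleted; consequently a reduced pipe dream of $\sigma$ has exactly $|\mu|$ crossings, so the matching filling has exactly $|\lambda| - |\mu|$ entries $1$. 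I would then prove the set equality by two inclusions.

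For the forward inclusion, take $F \in \Fne(\lambda,k)$ and show that $D_F$ is reduced with $\pi(D_F) = \sigma$. Reducedness I would derive from maximality: if two pipes of $D_F$ crossed twice, a local uncrossing move turns the two offending crossings into elbows, i.e.\ flips two $0$'s of $F$ to $1$'s and strictly increases the number of $1$'s while preserving the permutation; the point to check is that such a move can be chosen so as not to create a north-east chain of length $k+1$, which would contradict maximality of $F$ among the $(k+1)$-chain-free fillings. Granting reducedness, $D_F$ has $\ell(\pi(D_F))$ crossings, and it remains to identify the permutation. Here I would trace the pipes row by row, using that maximality forces the first $k$ rows and columns to behave in a prescribed way: the $k$ fixed points of $\sigma = \1{k}\times\tau$ come from the first $k$ pipes, while the dominant block $\tau = \sigma(\mu)$ emerges from the remaining boxes, matching that $\D(\sigma) = \mu$ sits with north-west corner $(k+1,k+1)$. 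Since $\pi(D_F)=\sigma$, the crossing count is $|\mu|$, so every element of $\Fne(\lambda,k)$ carries $|\lambda|-|\mu|$ ones, and this is therefore the maximal possible number.

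For the reverse inclusion I must establish two things about an arbitrary $D \in \RP(\sigma)$: that it lives inside $\lambda$, and that the filling $F = \lambda \setminus (\text{crossings of }D)$ lies in $\Fne(\lambda,k)$. Containment I would obtain from the fact that a Ferrers shape is closed under moving boxes north and west, together with the fact that all reduced pipe dreams of a fixed permutation are connected by chute and ladder moves that only push crossings toward the north-west: it then suffices to check that the extreme (most south-easterly) reduced pipe dream of $\sigma$ already has its crossings inside $\lambda$, which can be read off from the explicit shape $\D(\sigma) = \mu \subseteq \lambda$. Once $D = D_F$, reducedness gives $|\mu|$ crossings, hence $F$ has $|\lambda|-|\mu|$ ones, the maximal number by the forward inclusion; and the uncrossing dictionary, run in reverse, shows that a north-east chain of $1$'s of length $k+1$ in $F$ would force two pipes of $D$ to cross twice, contradicting reducedness. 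Thus $F$ is $(k+1)$-chain-free and maximal, i.e.\ $F \in \Fne(\lambda,k)$.

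The main obstacle is the geometric dictionary relating crossings of pipes to chains of the filling, and in particular pinning the threshold to exactly $k$. Two features make it delicate: a double crossing of two pipes is not literally a chain of $1$'s but is governed by the monotone staircase of elbows bounding the lens between the two crossings, so the translation requires a careful analysis of that boundary; and the value $k$ enters only through the $\1{k}$-prefix of $\sigma$, so I must show that it is precisely a chain of length $k+1$, with its bounding rectangle inside $\lambda$, and no shorter one, that obstructs reducedness for this permutation. I would isolate this as a lemma, proved by examining a minimal double-crossing configuration and reading off the forced chain, and dually by inserting a double crossing along a prescribed $(k+1)$-chain. The containment statement for surjectivity is a secondary technical point, handled as above via closure of Ferrers shapes under north-west moves.
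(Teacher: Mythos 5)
Your route is genuinely different from the paper's, which never manipulates pipes directly: it quotes the Knutson--Miller/Jia--Miller antidiagonal characterization of reduced pipe dreams (complements of reduced pipe dreams for $\sigma$ are exactly the maximal fillings avoiding every chain in $\As$; Proposition~\ref{pro:pipe dream description}) and then computes, via the essential set, that for $\sigma=\sigma_k(\lambda)$ the collection $\As$ is precisely the set of $(k+1)$-north-east chains in $\lambda$ (Propositions~\ref{prop:essential} and~\ref{prop:As}), after which the theorem is immediate. The problem with your plan is that the ``geometric dictionary'' you defer to a lemma is not a technical step of the proof --- it \emph{is} the theorem, namely the specialization of that antidiagonal characterization to $\sigma_k(\lambda)$ --- and your sketch does not contain the idea that makes it work. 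In particular, nothing in the lens analysis explains how a double crossing of two pipes produces a chain of length exactly $k+1$: the turning elbows on the boundary of the lens give some north-east chain, but reaching length $r_\sigma(p,q)+1=k+1$ requires accounting for the $k$ additional pipes forced through the relevant rectangle $[p]\times[q]$ by the condition $r_\sigma(p,q)=k$ on the essential set, and that counting is the entire content.

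Two further steps fail as stated. First, for the forward inclusion you must show $\pi(D_F)=\sigma$ for an \emph{arbitrary} maximal filling $F$, and the proposed mechanism --- ``maximality forces the first $k$ rows and columns to behave in a prescribed way'' --- is false: the boxes in the first $k$ rows and columns are not all passive (for $k=2$ the box $(2,2)$ lies on the chain $(3,1),(2,2),(1,3)$), and indeed in the paper's own Figure~\ref{fig:pipe dream}(a) the maximal $2$-filling has $0$'s at $(1,4)$, $(1,5)$ and $(2,2)$. The count of $1$'s equals the number of boxes in the first $k$ rows and columns, but the $1$'s do not sit there, and tracing where each pipe $j>k$ exits for a general maximal filling is exactly as hard as the theorem. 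Second, the containment $\RP(\sigma)\subseteq\RP(\sigma,\lambda)$ via ``moves that only push crossings toward the north-west'' misdescribes the moves: chute moves push a crossing down and to the left, ladder moves up and to the right, and neither direction obviously preserves containment in a Ferrers shape (a ladder move sends $(i,j)$ to some $(i',j+1)$, which can exit $\lambda$ when $\lambda_{i'}=\lambda_i=j$); in the paper this containment is instead immediate from Proposition~\ref{prop:As}, since a box outside $\lambda$ lies on no chain of $\As$ and is therefore expelled by minimality. To repair the proposal you should either import the antidiagonal characterization as the paper does, or first prove, as a genuine standalone lemma, that for $(p,q)$ in the essential set any pipe dream with permutation $\sigma$ whose complement contains $r_\sigma(p,q)+1$ elbows forming a north-east chain in $[p]\times[q]$ must have two pipes crossing twice; everything else in your outline is routine once that is in hand.
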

		For the proof of this theorem we use an alternative description of pipe dreams as given by A.~Knutson and E.~Miller in \cite[Theorem~B]{KM2005}, or, in a more combinatorial language, by N.~Jia and E.~Miller in \cite[Theorem~3]{JM2008}. First, we observe that the definition of antidiagonals in \cite[Definition~2]{JM2008} is equivalent to the definition of a north-east chain inside $[n] \times [n]$. Following the notion in the latter, define $\As$ for $\sigma \in \S{n}$ to be the collection over all $1 \leq p,q \leq n$ of all minimal north-east chains of length $r_\sigma(p,q) + 1$ lying inside the rectangle $[p] \times [q]$ where
		$$r_\sigma(p,q) := \#\big\{ (i,j) : i \leq p, j \leq q, \sigma(i) = j \big\}.$$
		As it can be seen in Figure~\ref{fig:permutation}, $r_\sigma(p,q)$ equals the number of stars in the matrix presentation of $\sigma$ lying inside the rectangle $[p]\times[q]$. 

		The set $\RP(\sigma)$ of reduced pipe dreams for $\sigma$ can be described in terms of $\As$ as follows. A subset of the staircase $(n-1,\ldots,2,1)$ is a reduced pipe dream for $\sigma$ if and only if it intersects every north-east chain in $\As$, and it is minimal in this sense. Looking at this observation in a slightly different way, we obtain the following proposition describing pipe dreams in terms of maximal fillings of the staircase shape.
		\begin{proposition}\label{pro:pipe dream description}
			Let $\sigma \in \S{n}$. A subset of the staircase $(n-1,\ldots,2,1)$ is a reduced pipe dream for $\sigma$ if and only if its complement is a maximal filling not containing any north-east chain in $\As$.
		\end{proposition}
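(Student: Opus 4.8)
The plan is to deduce the Proposition directly from the characterization of $\RP(\sigma)$ recalled just above it, by passing to complements inside the staircase. Write $T$ for the full set of boxes of the staircase $(n-1,\ldots,2,1)$, and for a subset $D \subseteq T$ let $F = T \setminus D$ denote its complement. The recalled observation (following Knutson--Miller and Jia--Miller) states that $D \in \RP(\sigma)$ if and only if $D$ meets every north-east chain in $\As$ and is minimal with respect to inclusion among the subsets of $T$ having this property. So it suffices to show that, under complementation, these two conditions on $D$ become exactly the two conditions defining a maximal filling of $T$ that avoids every chain in $\As$.

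First I would translate the covering condition. For a chain $C \in \As$ one has $D \cap C \neq \emptyset$ if and only if $C \not\subseteq F$; hence $D$ meets every chain in $\As$ if and only if no chain of $\As$ is entirely contained in $F$, that is, $F$ contains no north-east chain belonging to $\As$. This already identifies, complementarily, the transversals of $\As$ with the subsets of $T$ avoiding all chains in $\As$.

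Second, I would match the extremality conditions via the standard duality between transversals and independent sets in the hypergraph on vertex set $T$ whose hyperedges are the chains in $\As$. Concretely, assume $D$ is a transversal. For a box $b \in D$, the set $D \setminus \{b\}$ is still a transversal exactly when every chain through $b$ is also met by some other box of $D$; by the complementary reading of the previous paragraph, this is equivalent to saying that $F \cup \{b\}$ contains no chain $C \in \As$, since any such $C$ would have to pass through $b$ and satisfy $C \cap D = \{b\}$. Therefore the failure of inclusion-minimality of $D$ (some $b$ can be deleted) is literally the same statement as the failure of inclusion-maximality of $F$ (some $b$ can be added without creating a contained chain). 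Combining the two translations, $D$ is a minimal transversal of $\As$ if and only if $F$ is a maximal filling not containing any chain in $\As$, which is the claim.

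The argument is a formal complementation, so no single step is a genuine obstacle; the only point requiring care is the precise meaning of \emph{minimal} in the recalled characterization. I would make explicit that it means minimal under inclusion (not merely of minimum cardinality), so that the inclusion-reversing involution $D \mapsto T \setminus D$ carries inclusion-minimal transversals exactly onto inclusion-maximal chain-avoiding sets; once this convention is fixed the equivalence is immediate.
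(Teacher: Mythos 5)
Your proof is correct and follows essentially the same route as the paper's: both pass to complements in the staircase and observe that the inclusion-reversing involution $D \mapsto T\setminus D$ carries minimal transversals of $\As$ onto maximal $\As$-chain-avoiding fillings. You merely spell out the transversal/independent-set duality in more detail than the paper does, which is fine.
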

		\begin{proof}
			From the description of reduced pipe dreams above, it follows that a subset $R$ of the staircase shape is a reduced pipe dream for $\sigma_k(\lambda)$ if and only if it intersects every north-east chain in $\As$, and it is minimal with respect to this property. Thus, $R$ is a reduced pipe dream if and only if its complement $R^c$ in the staircase shape does not contain any north-east chain in $\As$, and $R^c$ is maximal with respect to this property. The latter is precisely a maximal filling of the staircase shape not containing any north-east chain in $\As$.
		\end{proof}
		However, we do not need to consider all rectangles $[p] \times [q]$ to define $\As$. It is enough to consider the collection of all south-east corner boxes $(p,q)$ of $\D(\sigma)$, each labelled by $r_\sigma(p,q)$. This labelled collection is called the \Dfn{essential set} of $\sigma$ in \cite[Section~2.2]{Man2001}. See Figure~\ref{fig:moon pipe} for an example.
		\begin{proposition}\label{prop:essential}
			$\As$ is given by the collection of north-east chains of length $r_\sigma(p,q) + 1$ lying inside rectangles $[p] \times [q]$ for boxes $(p,q)$ in the essential set of $\sigma$.
		\end{proposition}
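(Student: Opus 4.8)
The plan is to prove the two inclusions of collections separately, reading ``minimal north-east chain'' in the sense of Jia--Miller \cite{JM2008}, i.e.\ as an inclusion-minimal forbidden chain. The main tool is the monotonicity of the rank function $r_\sigma$ together with the standard characterization of $\D(\sigma)$ and its essential set: recall that $(p,q)\in\D(\sigma)$ precisely when $\sigma(p)>q$ and $\sigma^{-1}(q)>p$, that the essential set consists of the south-east corners of $\D(\sigma)$, and that $r_\sigma(p+1,q)-r_\sigma(p,q)=1$ exactly when $\sigma(p+1)\le q$ (and symmetrically in columns), being $0$ otherwise. Since every essential box is in particular a box $(p,q)$, one inclusion will be immediate once one checks that chains coming from essential boxes are genuinely inclusion-minimal; I address this at the end.

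First I would show that every minimal chain in $\As$ already arises from an essential box. Let $C$ be a minimal forbidden chain of length $r_\sigma(p,q)+1$ inside $[p]\times[q]$. The central observation is that if enlarging the rectangle to $[p+1]\times[q]$ does not raise the rank, equivalently $\sigma(p+1)>q$, then the very same set $C$ is a forbidden chain of the required length $r_\sigma(p+1,q)+1=r_\sigma(p,q)+1$ inside the larger rectangle, and symmetrically for $[p]\times[q+1]$ when $\sigma^{-1}(q+1)>p$. Hence I may push the rectangle south and east, never altering the set $C$, until I reach a box $(a,b)$ at which both the southward and the eastward step strictly increase the rank, i.e.\ $\sigma(a+1)\le b$ and $\sigma^{-1}(b+1)\le a$. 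This terminates because $p+q$ strictly increases and is bounded by $2n$; moreover the existence of a chain of length $r_\sigma(a,b)+1$ forces $r_\sigma(a,b)<\min(a,b)$, which excludes $a=n$ and $b=n$, so the two stopping inequalities are honest rank jumps with $a+1,b+1\le n$.

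It remains to see that the terminal box $(a,b)$ lies in $\D(\sigma)$, for then $\sigma(a+1)\le b$ and $\sigma^{-1}(b+1)\le a$ say exactly that $(a+1,b)\notin\D(\sigma)$ and $(a,b+1)\notin\D(\sigma)$, making $(a,b)$ a south-east corner and hence essential. This is where minimality of $C$ is used and is the main obstacle. If $(a,b)\notin\D(\sigma)$, then $\sigma(a)\le b$ or $\sigma^{-1}(b)\le a$; in the first case row $a$ carries a star in a column $\le b$, so $r_\sigma(a,b)=r_\sigma(a-1,b)+1$, and deleting from $C$ its unique box in row $a$ (or, if $C$ meets no such row, passing to a sub-chain) yields a strictly smaller forbidden chain inside $[a-1]\times[b]$, contradicting minimality of $C$; the case $\sigma^{-1}(b)\le a$ is symmetric in columns. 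Thus $(a,b)$ is essential and $C$ is forbidden there. For the reverse inclusion one must check that a chain attached to an essential box contains no shorter forbidden one: since an essential box is a south-east corner the rank is strictly deficient there, and the same row/column bookkeeping run in the opposite order shows every proper sub-chain meets its bounding rectangle with rank equal to its length, hence is not forbidden; this is precisely Fulton's irredundancy of the essential set \cite[Section~2.2]{Man2001}. I would note that for the intended use, namely the equivalence of avoidance needed together with Proposition~\ref{pro:pipe dream description}, the first inclusion already suffices: if a filling contains any forbidden chain it contains an inclusion-minimal one, which by the above is an essential chain.
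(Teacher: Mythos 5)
Your first inclusion --- every inclusion-minimal forbidden chain is attached to an essential box --- is correct, and it is essentially a more careful rendering of the paper's own argument: where you push the rectangle south and east one step at a time while the rank stays constant, and then use minimality of $C$ at the terminal box to force that box into $\D(\sigma)$, the paper reduces in one stroke via the constancy of $r_\sigma$ on components of $\D(\sigma)$ together with the identity $r_\sigma(p+a,q+b)=r_\sigma(p,q)+a+b$ south-east of an essential box. Your termination bound ($a,b<n$ because $r_\sigma(a,b)+1\le\min(a,b)$) and the dichotomy $\sigma(a)\le b$ or $\sigma^{-1}(b)\le a$ at the terminal box are fine, and this half is if anything more complete than the published sketch.

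The reverse inclusion is where the proposal has a genuine gap: the claim that every chain of length $r_\sigma(p,q)+1$ in $[p]\times[q]$ for essential $(p,q)$ is inclusion-minimal is \emph{false} for general $\sigma$, and ``Fulton's irredundancy'' does not deliver it --- irredundancy concerns rank conditions, not containment of chains. Concretely, for $\sigma=[5,6,1,7,2,3,4]\in\S{7}$ the essential set is $\{(2,4),(4,4)\}$ with $r_\sigma(2,4)=0$ and $r_\sigma(4,4)=1$; the chain $\{(2,1),(1,2)\}$ has length $2=r_\sigma(4,4)+1$ and lies in $[4]\times[4]$, yet it properly contains the forbidden singleton $\{(2,1)\}\subseteq[2]\times[4]$, so it is not minimal. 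This does not sink the argument, for the two reasons you yourself touch on: the first inclusion alone gives the avoidance equivalence needed for Proposition~\ref{pro:pipe dream description}, and for $\sigma=\sigma_k(\lambda)$ the reverse inclusion does hold, since $r_\sigma(p,q)\ge\min(p,q,k)$ while a chain in $[p]\times[q]$ has length at most $\min(p,q)$, so every forbidden chain has length at least $k+1$ and the essential chains, having length exactly $k+1$, cannot properly contain one. But as written your proof of the second inclusion would fail, and the proposition should either be restricted to $\sigma_k(\lambda)$ (where it feeds Proposition~\ref{prop:As}) or be weakened to the statement that a filling avoids $\As$ if and only if it avoids the essential chains.
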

		\begin{proof}
			Every $[i] \times [i]$ rectangle for $i \leq k$, where $k$ is the smallest label of an element in the essential set of $\sigma$, contains $i$ stars. Therefore, $\As$ does not contain any north-east chains of length smaller or equal to $k$. Moreover, observe that $r_\sigma(p,q)$ is constant inside a component of $\D(\sigma)$ and thus, among those it is enough to consider boxes $(p,q)$ in the essential set. As $r_\sigma(p+a,q+b) = r_\sigma(p,q) + a + b$ for such a box $(p,q)$, the lemma follows from the minimality condition in the definition of $\As$.
		\end{proof}
		Using this proposition, we also obtain the following description of $\As$ coming from Ferrers shapes.
		\begin{proposition}\label{prop:As}
			Let $\lambda$ be a Ferrers shape and let $\sigma := \sigma_k(\lambda)$. $\As$ is given by the collection of all north-east chains of length $k+1$ in $\lambda$.
		\end{proposition}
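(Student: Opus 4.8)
The plan is to deduce this from Proposition~\ref{prop:essential}, which already reduces $\As$ to the essential set of $\sigma := \sigma_k(\lambda)$. So the first step is to pin down the diagram, the essential boxes, and the ranks of $\sigma$. Since $\sigma = \1{k} \times \tau$ with $\tau = \sigma(\mu)$ dominant and $\D(\tau) = \mu$, prepending $k$ fixed points at the front creates no new inversions and merely shifts the existing ones, so $\D(\sigma)$ is $\mu$ translated to have its north-west corner in box $(k+1,k+1)$. Consequently the essential set of $\sigma$ is exactly the set of south-east (outer) corners $(a,b)$ of $\mu$, each translated to $(a+k,b+k)$.

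Second, I would compute the rank $r_\sigma(p,q)$ at such an essential box $(p,q)=(a+k,b+k)$. The permutation matrix of $\sigma$ has its first $k$ stars on the diagonal, in boxes $(i,i)$ for $i \le k$, all of which lie inside $[p]\times[q]$ since $p,q \ge k+1$; the remaining stars are those of $\tau$ translated by $(k,k)$, and the number of these inside $[p]\times[q]$ is precisely $r_\tau(a,b)$. Because $(a,b)$ is an essential box of the dominant permutation $\tau$, and the essential-set conditions of a dominant permutation are all rank-zero conditions, we have $r_\tau(a,b)=0$. Hence $r_\sigma(p,q)=k$ at every essential box, and Proposition~\ref{prop:essential} then says that $\As$ consists of the north-east chains of length $k+1$ lying inside the rectangles $[a+k]\times[b+k]$, as $(a,b)$ ranges over the south-east corners of $\mu$.

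The heart of the argument is to identify this family with all north-east chains of length $k+1$ in $\lambda$, and here I would use two facts. The first is the standard observation that a Young diagram is the union of the rectangles $[a]\times[b]$ taken over its south-east corners $(a,b)$; applied to $\mu$, a box $(R',C')$ lies in $\mu$ if and only if it lies in some such corner rectangle. The second is the defining relation $\mu_i = \lambda_{k+i}-k$, which yields the key equivalence: for $R,C \ge k+1$ one has $(R,C)\in\lambda$ if and only if $(R-k,C-k)\in\mu$. Now a north-east chain of length $k+1$ has strictly increasing columns and strictly decreasing rows, so its largest row $R$ and largest column $C$ satisfy $R,C \ge k+1$, and the chain lies in $\lambda$ exactly when its south-east-most corner $(R,C)$ lies in $\lambda$, i.e. when $(R-k,C-k)\in\mu$, i.e. when $(R-k,C-k)$ lies in some corner rectangle $[a]\times[b]$ of $\mu$. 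The latter is equivalent to the whole chain lying inside $[a+k]\times[b+k]$. Running this equivalence in both directions — and noting that $(a,b)\in\mu$ forces $(a+k,b+k)\in\lambda$ and hence $[a+k]\times[b+k]\subseteq\lambda$, so that every chain inside such a rectangle is genuinely a chain in $\lambda$ — gives the claimed equality of the two families.

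The one place demanding care is the bookkeeping in the translation between the shifted coordinates of $\D(\sigma)$ and the original shape $\lambda$, together with confirming that restricting to chains of length exactly $k+1$ whose bounding box lies in $\lambda$ captures precisely the minimal chains recorded in $\As$. Once the equivalence $(R,C)\in\lambda \Leftrightarrow (R-k,C-k)\in\mu$ is in place this is routine, and the only remaining loose ends are the degenerate cases in which some rows or columns of $\mu$ are empty.
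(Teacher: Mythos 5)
Your argument is correct and follows the same route as the paper's own proof: reduce to Proposition~\ref{prop:essential}, identify the essential set of $\sigma_k(\lambda)$ with the (shifted) south-east corners of the maximal rectangles $[p]\times[q]$ of $\lambda$ of width and height exceeding $k$, observe that $r_\sigma(p,q)=k$ there, and match the resulting chain families. You merely spell out in more detail the coordinate bookkeeping and the equivalence $(R,C)\in\lambda \Leftrightarrow (R-k,C-k)\in\mu$ that the paper leaves implicit.
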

		\begin{proof}
			We have already seen that the essential set of $\sigma$ is given by the south-east corner boxes of maximal rectangles in $\lambda$ of width and height strictly larger than $k$. As all those maximal rectangles are of the form $[p] \times [q]$, the result follows with the observation that $r_\sigma(q,p) = k$ for such $(p,q)$.
		\end{proof}
		This proposition implies the following well known corollary.
		\begin{corollary}\label{cor:living dream}
			Every reduced pipe dream for $\sigma = \sigma_k(\lambda)$ lives inside $\lambda$, namely
			$$\RP(\sigma) = \RP(\sigma,\lambda).$$
		\end{corollary}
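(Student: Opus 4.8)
The plan is to translate the claim into the complementary, maximal-filling language and then exploit the fact, established in Proposition~\ref{prop:As}, that every north-east chain in $\As$ lies entirely inside $\lambda$. Saying that a reduced pipe dream $R$ lives inside $\lambda$ is the same as saying that its set of crossings is contained in $\lambda$, i.e.\ that every box of the staircase $(n-1,\ldots,2,1)$ lying outside $\lambda$ is \emph{not} a crossing of $R$. Passing to complements via Proposition~\ref{pro:pipe dream description}, $R$ is a reduced pipe dream for $\sigma = \sigma_k(\lambda)$ precisely when its complement $R^c$ in the staircase is a maximal filling containing no north-east chain from $\As$. Thus it suffices to show that every such maximal filling $R^c$ necessarily contains all boxes of the staircase outside $\lambda$.

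First I would record the governing geometric observation: by Proposition~\ref{prop:As} each chain in $\As$ is a north-east chain of length $k+1$ in $\lambda$, so by the very definition of a north-east chain in a Ferrers shape the smallest rectangle containing it, and in particular every one of its boxes, lies inside $\lambda$. Consequently no box of the staircase lying outside $\lambda$ can ever be a box of any chain in $\As$.

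Next I would run the maximality argument. Suppose for contradiction that some box $b$ of the staircase with $b \notin \lambda$ fails to lie in $R^c$. Since $R^c$ is maximal among fillings avoiding the chains in $\As$, adjoining $b$ to $R^c$ must create a north-east chain belonging to $\As$; but every such chain consists only of boxes of $\lambda$ by the previous step, whereas $b \notin \lambda$, so $b$ cannot be one of its boxes, a contradiction. Hence $R^c$ contains every box outside $\lambda$, which forces $R \subseteq \lambda$, i.e.\ $R$ lives inside $\lambda$. As this holds for an arbitrary $R \in \RP(\sigma)$, we conclude $\RP(\sigma) = \RP(\sigma,\lambda)$.

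I do not expect any genuine obstacle here: all the content is already packaged in Proposition~\ref{prop:As}, which confines the forbidden chains to $\lambda$, and the corollary then drops out in one line from maximality. The only point deserving a little care is the correct reading of ``lives inside,'' namely that it refers to the crossings (the boxes of $R$ itself) rather than to the elbows, so that the statement is genuinely the assertion that the complementary maximal filling is forced to occupy the entire region of the staircase outside $\lambda$.
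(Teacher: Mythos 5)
Your argument is correct and is exactly the deduction the paper intends when it states that the corollary follows from Proposition~\ref{prop:As} (the paper gives no further details): since every chain in $\As$ lies wholly inside $\lambda$, inclusion-maximality forces the complementary filling to occupy all staircase boxes outside $\lambda$, so the crossings of any reduced pipe dream are confined to $\lambda$. Your expanded write-up, including the careful reading of ``lives inside'' and the two-case analysis at the box $b$, is a faithful and complete version of that one-line proof.
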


		Putting the arguments together, we can now prove Theorem~\ref{th:fillings-pipe dreams}.
		\begin{proof}[Proof of Theorem~\ref{th:fillings-pipe dreams}]
			The result follows from Propositions~\ref{pro:pipe dream description} and~\ref{prop:As}.
		\end{proof}
		
		\subsection{Generalizations to moon polyominoes}\label{sec:moon}
			The results in the previous section can be partially generalized to moon polyominoes which were studied by J.~Jonsson in \cite{J2005}. A polyomino $M$ (i.e., a set of boxes in the positive integer quadrant) is called \Dfn{convex} if for any two boxes in $M$ lying in the same row or column, all boxes in between are also contained in $M$. Moreover, $M$ is called \Dfn{intersection-free} if for any two columns (or equivalently, rows) of $M$, one is contained in the other. A polyomino is called a \Dfn{moon polyomino} if it is convex and intersection-free.
			\begin{figure}
				\begin{tikzpicture}[scale=1]
					\boxcollection{0.5}{(0,0)}{
						2/0/$+$,3/0/$+$,4/0/,
						0/1/$+$,1/1/$+$,2/1/,3/1/$+$,4/1/,5/1/,
						0/2/,1/2/,2/2/,3/2/$+$,4/2/$+$,5/2/,6/2/,
						0/3/,1/3/,2/3/,3/3/,4/3/$+$,5/3/$+$,6/3/$+$,
						1/4/,2/4/,3/4/,4/4/$+$,
						2/5/,3/5/$+$}
					\latticepath{0.5}{(1,0.5)}{thick}{
						3/0/black,0/-1/black,1/0/black,0/-1/black,1/0/black,0/-2/black,-2/0/black,0/-1/black,-1/0/black,0/-1/black,-2/0/black,0/1/black,-1/0/black,0/1/black,-1/0/black,0/3/black,2/0/black,0/1/black}
					\latticepath{0.5}{(1.05,0.45)}{thick}{
						2.8/0/verylightgrey,0/-4.8/verylightgrey,-2.8/0/verylightgrey,0/4.8/verylightgrey}
					\content{0.5}{(0,-4.65)}{
						0/0/}
				\end{tikzpicture}
				\quad\qquad
				\begin{tikzpicture}[scale=1]
					\tpipedream{0.5}{(0,0)}{
						0/0/red/red,1/0/red/red,2/0/green/green,3/0/green/green,5/0/red/red,6/0/red/red,7/0/red/red,8/0/red/red,9/0/red/red,10/0/red/white,
						0/1/green/green,1/1/green/green,3/1/green/green,6/1/red/red,7/1/red/red,8/1/red/red,9/1/red/white,
						3/2/green/green,4/2/green/green,7/2/red/red,8/2/red/white,
						4/3/green/green,5/3/green/green,6/3/green/green,7/3/red/white,
						4/4/green/green,0/4/red/red,5/4/red/red,6/4/red/white,
						3/5/green/green,0/5/red/red,1/5/red/red,4/5/red/red,5/5/red/white,
						0/6/red/red,1/6/red/red,2/6/red/red,3/6/red/red,4/6/red/white,
						0/7/red/red,1/7/red/red,2/7/red/red,3/7/red/white,
						0/8/red/red,1/8/red/red,2/8/red/white,
						0/9/red/red,1/9/red/white,
						0/10/red/white}
					\cpipedream{0.5}{(0,0)}{
						4/0/green/green,
						2/1/green/green,4/1/green/green,5/1/green/green,
						0/2/green/green,1/2/green/green,2/2/green/green,5/2/green/green,6/2/green/green,
						0/3/green/green,1/3/green/green,2/3/green/green,3/3/green/green,
						1/4/green/green,2/4/green/green,3/4/green/green,
						2/5/green/green}
					\content{0.5}{(0,1)}{
						0/0/$1$,1/0/$2$,2/0/$3$,3/0/$4$,4/0/$5$,5/0/$6$,6/0/$7$,7/0/$8$,8/0/$9$,9/0/$10$,10/0/$11$,
						-1/1/$1$,-1/2/$2$,-1/3/$8$,-1/4/$10$,-1/5/$3$,-1/6/$7$,-1/7/$6$,-1/8/$5$,-1/9/$4$,-1/10/$9$,-1/11/$11$}
					\content{0.5}{(0,-2.6)}{
						0/0/}

					\latticepath{0.5}{(1,0.5)}{thick}{
						3/0/black,0/-1/black,1/0/black,0/-1/black,1/0/black,0/-2/black,-2/0/black,0/-1/black,-1/0/black,0/-1/black,-2/0/black,0/1/black,-1/0/black,0/1/black,-1/0/black,0/3/black,2/0/black,0/1/black}
				\end{tikzpicture}
				\caption{A $1$-north-east filling of a moon polyomino and its associated pipe dream.}
				\label{fig:moon}
			\end{figure}
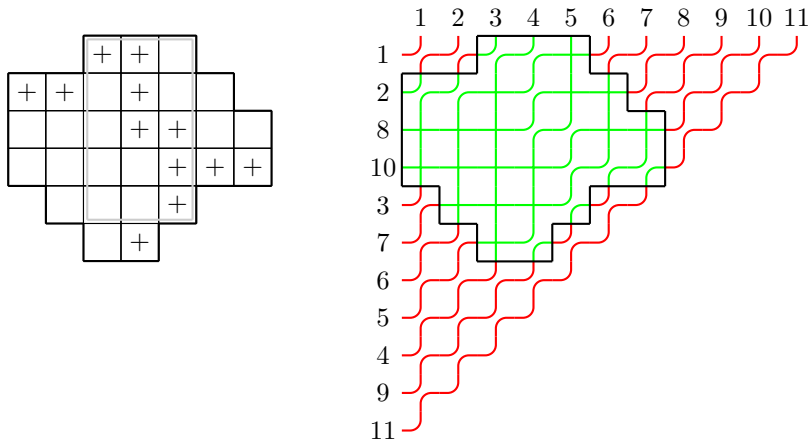
			
			Without loss of generality we consider always moon polyominoes which are north-west justified, namely, they contain boxes both in the first row and in the first column. Observe that Ferrers shapes are special types of moon polyominoes. A $k$-filling of a moon polyomino is defined exactly in the same way as for a Ferrers shape. See  Figure~\ref{fig:moon} for an example.

			To connect $k$-north-east fillings of a moon polyomino $M$ and pipe dreams of a certain permutation $\sigma = \sigma_k(M)$, we must relate maximal fillings of $M$ which do not contain a $(k+1)$-north-east chain in one of its maximal rectangles and maximal fillings of the staircase $(n-1,\ldots,2,1)$ which do not contain a north-east chain of length $r_\sigma(p,q) + 1$ in any rectangle $[p] \times [q]$. Define $\sigma_k(M)$ as follows: for a maximal rectangle $R$ in $M$ of width and height both strictly larger than $k$, let $(a+1,b+1)$ and $(i,j)$ be its north-west and south-east corner boxes. Mark the box $(i+b,j+a)$ with $a+b+k$. $\sigma_k(M)$ is the permutation with this collection as its essential set. This means that the diagram $\D(\sigma)$ of $\sigma$ has $(i+b,j+a)$ as a south-east corner with labels $r_\sigma(i+b,j+a)$ = $a+b+k$. Using \cite[2.2.8]{Man2001}, it is easy to see that this construction is well defined. Note that maximal rectangles of width or height less than or equal to $k$ cannot contain any north-east chain of length larger than $k$ and thus do not contribute to the essential set of the corresponding permutation.
			For example, the moon polyomino $M$ in Figure~\ref{fig:moon}(a) has maximal rectangles $(a+1,b+1) - (i,j)$ given by
			\begin{eqnarray*}
				&(1,3) - (5,5), \quad (1,3) - (6,4),&\\
				&(3,1) - (4,7), \quad (2,2) - (5,5),&\\
				&(2,1) - (4,6),&
			\end{eqnarray*}
			where the first maximal rectangle is highlighted. Thus, for $k = 1$, the resulting essential set and the associated diagram can be seen in Figure~\ref{fig:moon pipe} and the associated permutation is $\sigma_1(M) = [1,2,8,10,3,7,6,5,4,9]$.
			\begin{figure}
				\begin{tikzpicture}[scale=1]
					\content{0.5}{(0,1)}{
					0/0/$1$,1/0/$2$,2/0/$3$,3/0/$4$,4/0/$5$,5/0/$6$,6/0/$7$,7/0/$8$,8/0/$9$,9/0/$10$,
					-1/1/$1$,-1/2/$2$,-1/3/$3$,-1/4/$4$,-1/5/$5$,-1/6/$6$,-1/7/$7$,-1/8/$8$,-1/9/$9$,-1/10/$10$,
					3/8/$3$,4/7/$3$,5/6/$3$,
					8/4/$3$,6/4/$2$,
					0/1/$\star$,1/2/$\star$,2/5/$\star$,3/9/$\star$,4/8/$\star$,5/7/$\star$,
					6/6/$\star$,7/3/$\star$,8/10/$\star$,9/4/$\star$}
					
					\latticepath{0.5}{(1,-0.5)}{thick}{
						5/0/black,0/-2/black,-5/0/black,0/2/black}
					\latticepath{0.5}{(1.5,-2)}{thick}{
						3/0/black,0/-1/black,-1/0/black,0/-1/black,-1/0/black,0/-1/black,-1/0/black,0/3/black}
					\latticepath{0.5}{(4,-1)}{thick}{
						1/0/black,0/-1/black,-1/0/black,0/1/black}
				\end{tikzpicture}
				\caption{The essential set and the diagram $\D(\sigma)$ for $\sigma = [1,2,8,10,3,7,6,5,4,9]$.}
				\label{fig:moon pipe}
			\end{figure}
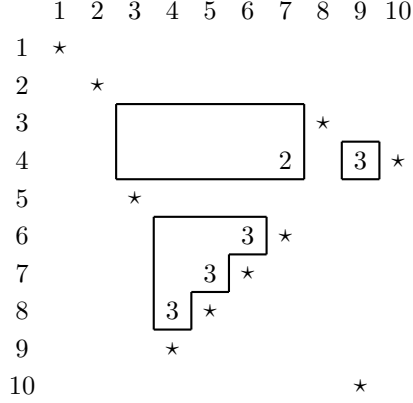
		As all maximal rectangles in a Ferrers shape are of the form $[p] \times [q]$, the definition of $\sigma_k(\lambda)$ reduces in this case to the definition given in the previous section. Moreover observe that in the more general context of moon polyominoes which are not Ferrers shapes, Corollary~\ref{cor:living dream} does not hold.
		
		The following theorem is a more precise reformulation of Theorem~\ref{th:4}.
		\begin{theorem}\label{th:moon fillings-pipe dreams}
			The complementary map from $k$-north-east fillings of a moon polyomino $M$ to pipe dreams of $\sigma = \sigma_k(M)$ is a bijection between $\Fne(M,k)$ and $\RP( \sigma, M )$.
		\end{theorem}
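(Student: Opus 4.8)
The plan is to adapt the proof of Theorem~\ref{th:fillings-pipe dreams} to moon polyominoes, the essential new feature being that Corollary~\ref{cor:living dream} now fails, so I must keep track of which pipe dreams live inside~$M$. Let $S$ denote the ambient staircase shape and, for a $(0,1)$-filling $F$ of $M$, write $G(F):=F\cup(S\setminus M)$ for the staircase filling obtained by adjoining every box of $S$ lying outside~$M$. By the definition of the complementary map, $G(F)$ is exactly the elbow set of the associated pipe dream, so that the pipe dream itself is the crossing set $S\setminus G(F)=M\setminus F\subseteq M$; in particular it always lives inside~$M$. Since a reduced pipe dream $R$ lies in $\RP(\sigma,M)$ precisely when $R\subseteq M$, equivalently when $S\setminus R\supseteq S\setminus M$, the complementations $F\mapsto M\setminus F$ and $R\mapsto M\setminus R$ are mutually inverse bijections between $(0,1)$-fillings of $M$ and crossing sets living inside~$M$. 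It therefore remains to match $\Fne(M,k)$ with $\RP(\sigma,M)$ under these maps.

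By Proposition~\ref{pro:pipe dream description}, $M\setminus F\in\RP(\sigma)$ if and only if $G(F)$ is a maximal filling of $S$ avoiding every north-east chain in $\As$. The crux is thus the \emph{chain-avoidance equivalence}: $G(F)$ avoids every north-east chain in $\As$ if and only if $F$ contains no north-east chain of length $k+1$ in any maximal rectangle of $M$. Granting this, maximality transfers for free: applying the equivalence to $F$ and to each $F\cup\{x\}$ with $x\in M\setminus F$ shows that $x$ can be added to $F$ without creating a $(k+1)$-chain if and only if $x$ can be added to $G(F)$ without creating an $\As$-chain, and every box of $S\setminus M$ already belongs to $G(F)$. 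Hence $F\in\Fne(M,k)$ exactly when $G(F)$ is maximal $\As$-avoiding, i.e.\ exactly when $M\setminus F\in\RP(\sigma,M)$, which is the theorem.

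The main obstacle is the chain-avoidance equivalence, which I would establish one maximal rectangle at a time. By Proposition~\ref{prop:essential} and the definition of $\sigma_k(M)$, the collection $\As$ consists, for each maximal rectangle $R$ of $M$ with corners $(a+1,b+1)$ and $(i,j)$ (of width and height exceeding $k$), of the north-east chains of length $a+b+k+1$ inside $[i+b]\times[j+a]$. The target is the additive identity
$$\operatorname{ne}\big(G(F);\,[i+b]\times[j+a]\big)=a+b+\operatorname{ne}(F;\,R),$$
where $\operatorname{ne}(X;Q)$ denotes the length of the longest north-east chain of $X$ inside a region $Q$. The equivalence follows by comparing both sides with the threshold $a+b+k$ and ranging over all maximal rectangles; this is legitimate since any north-east chain of $F$ has its bounding rectangle inside $M$, hence inside some maximal rectangle. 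The inequality ``$\ge$'' is constructive: the $b$ boxes $(i+b,1),(i+b-1,2),\dots,(i+1,b)$ south-west of $R$ and the $a$ boxes $(a,j+1),(a-1,j+2),\dots,(1,j+a)$ north-east of $R$ prolong any north-east chain of $F$ in $R$ into one of length $a+b+\operatorname{ne}(F;R)$ inside $[i+b]\times[j+a]$, once one checks—using the maximality of $R$ together with the convexity and intersection-freeness of $M$—that these $a+b$ boxes lie in the staircase and outside $M$, so that they belong to $G(F)$.

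The reverse inequality ``$\le$'' is the delicate part, and this is precisely where the moon-polyomino hypotheses are indispensable. Given a north-east chain of $G(F)$ inside $[i+b]\times[j+a]$, I would bound by $a+b$ the number of its boxes lying outside $R$: at most $b$ occupy the columns $1,\dots,b$ to the left of $R$ and at most $a$ occupy the columns $j+1,\dots,j+a$ to the right, while the boxes in the columns of $R$ but in rows outside the range $[a+1,i]$ must be shown not to lengthen the chain. The latter is exactly the phenomenon responsible for the failure of Corollary~\ref{cor:living dream}, and controlling it is where I expect the real work to lie: I would invoke convexity and intersection-freeness to describe the nested row-intervals of $M$ within the columns of $R$ and argue that any box of $G(F)$ in those columns but beyond $R$ is redundant, in that it cannot raise the total outside-$R$ count above $a+b$. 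Once the outside-$R$ contribution is pinned to at most $a+b$, the remaining boxes of the chain form a north-east chain of $F$ inside $R$, giving ``$\le$'' and completing the additive identity, hence the theorem.
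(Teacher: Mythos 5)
Your overall architecture is the paper's: reduce via the complementary map to Proposition~\ref{pro:pipe dream description}, then prove a chain-avoidance equivalence one maximal rectangle at a time, using intersection-freeness to extend a $(k+1)$-chain of $F$ inside a maximal rectangle $R$ by the $a+b$ boxes of the staircase strictly south-west and strictly north-east of $R$, which lie outside $M$ and hence in $G(F)$ automatically. That direction is correct and is exactly the paper's argument. The genuine gap is the converse inequality, which you explicitly leave open (``this is where I expect the real work to lie'') and for which your proposed route --- describing the nested row-intervals of $M$ inside the columns of $R$ and arguing that boxes of $G(F)$ beyond $R$ are ``redundant'' --- is both unfinished and misdirected: a proof cannot end with an announced intention to handle the hard case, and the structure of $M$ is not what closes it.

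The missing step needs no property of $M$ at all; it is a short positional count. Write a chain of $\As$ inside $[i+b]\times[j+a]$ as $(r_1,c_1),\dots,(r_m,c_m)$ with $m=a+b+k+1$, $r_1>\dots>r_m$, $c_1<\dots<c_m$, $r_1\le i+b$, $c_m\le j+a$. Strict monotonicity gives $r_t\le i+b-t+1$ and $c_t\ge t$, so a box with $r_t>i$ \emph{or} $c_t\le b$ forces $t\le b$; dually, $c_t\le j+a-m+t$ and $r_t\ge 1+m-t$, so a box with $r_t\le a$ or $c_t>j$ forces $t>m-a$. Hence every box outside $R=[a+1,i]\times[b+1,j]$ occupies one of the first $b$ or last $a$ positions, and the middle $k+1$ boxes lie in $R\subseteq M$, hence in $G(F)\cap M=F$, yielding the required $(k+1)$-chain of $F$ with bounding rectangle in $M$. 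In particular your worry about boxes in the columns of $R$ but in rows outside $[a+1,i]$ evaporates: such a box still consumes one of the $b$ early or $a$ late slots, so it cannot push the outside-$R$ count past $a+b$. With this count supplied, your transfer of maximality goes through and the proof closes; note that the published proof is itself silent on this converse direction, so the count is genuinely needed and not something you may cite away.
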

		\begin{proof}
			Recall that the set $\mathcal{A}_\sigma$ is the collection over all $(p,q)$ in the essential set of $\sigma$ of all minimal north-east chains in $[p] \times [q]$ of length $r_\sigma(p,q) + 1$. By construction, every such $(p,q)$ comes from a maximal rectangle $R$ in $M$ with north-west corner $(a+1,b+1)$ and south-east corner $(i,j)$. Thus, $(p,q) = (i+b,j+a)$ and $r_\sigma(p,q) = a+b+k$.
			
			As $M$ is intersection-free by definition, no box strictly south-west or strictly north-east of $R$ is contained in $M$. Therefore, any $(k+1)$-north-east chain inside $R$ can be extended to a $(a+b+k+1)$-north-east chain inside $[p] \times [q]$, compare the maximal rectangle highlighted in Figure~\ref{fig:moon}. This implies that a $(k+1)$-north-east chain inside $R$ cannot be contained in the complement of a pipe dream for $\sigma$ living inside $M$. In total, we obtain that the set of complements of pipe dreams for $\sigma$ living inside $M$ are exactly maximal fillings of $M$ not containing a north-east chain of length $k+1$. This completes the proof.
		\end{proof}

		We now use this theorem together with the main theorem in \cite{J2005} to get new insights on pipe dreams. A \Dfn{stack polyomino} is a moon polyomino where every column starts in the first row. Let $S$ be a stack polyomino and let $\lambda$ be the Ferrers shape obtained from $S$ by properly rearranging the columns. J.~Jonsson proved in \cite[Theorem~14]{J2005} that the number of $k$-north-east fillings of $S$ with a given number of $+$'s in every row equals the number of $k$-north-east fillings in $\lambda$ with the same number of $+$'s in every row. Moreover, he conjectured that this property still holds if the stack polyomino $S$ is replaced by a moon polyomino. Therefore, we obtain the following corollary and the conjecture for the analogous statement for moon polyominoes.
		\begin{corollary}\label{cor:2}
			Let $S$ be a stack polyomino and let $\lambda$ be the associated Ferrers shape. The number of pipe dreams in $\RP( \sigma_k(S), S )$ with a given number of crossings in every row is equal to the number of pipe dreams in $\RP( \sigma_k(\lambda) )$ with the same number of crossings in every row.
		\end{corollary}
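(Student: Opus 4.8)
The plan is to chain together three facts: the complementary-map bijection of Theorem~\ref{th:moon fillings-pipe dreams} (applied once to $S$ and once to $\lambda$), Jonsson's row-refined equinumerosity \cite[Theorem~14]{J2005}, and the elementary observation that permuting columns of a stack polyomino preserves the number of boxes in each row. The key bookkeeping point is to track how the complementary map converts the statistic ``number of crossings in a row'' into the statistic ``number of $+$'s in a row.''

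First I would make this row translation precise. Fix a row index $r$ and let $b_r$ denote the number of boxes of $M$ (where $M$ is either $S$ or $\lambda$) in row $r$. Since all our polyominoes are taken north-west justified, row $r$ of $M$ coincides with row $r$ of the enclosing staircase. Under the complementary map each $+$ in row $r$ becomes an elbow, each empty box of $M$ in row $r$ becomes a crossing, and every box of the staircase lying in row $r$ but outside $M$ becomes an elbow. Hence the number of crossings in row $r$ of the associated pipe dream equals $b_r - p_r$, where $p_r$ is the number of $+$'s of the filling in row $r$. Consequently, specifying the crossing-content $(c_r)_r$ of a pipe dream in $\RP(\sigma_k(M),M)$ is, through the bijection of Theorem~\ref{th:moon fillings-pipe dreams}, exactly the same as specifying the $+$-content $(b_r - c_r)_r$ of the corresponding $k$-north-east filling in $\Fne(M,k)$.

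Next I would record the one structural fact that makes $S$ and $\lambda$ comparable: the number of boxes in row $r$ equals the number of columns of height at least $r$, and this quantity is invariant under permuting the columns. Since $\lambda$ is obtained from $S$ precisely by rearranging columns, $S$ and $\lambda$ share identical per-row box counts $b_r$, so the conversion $c_r \mapsto b_r - c_r$ between crossing-content and $+$-content is governed by the same numbers on both sides. For the Ferrers side I would also invoke Corollary~\ref{cor:living dream}, which gives $\RP(\sigma_k(\lambda)) = \RP(\sigma_k(\lambda),\lambda)$, so that Theorem~\ref{th:moon fillings-pipe dreams} applies to all of $\RP(\sigma_k(\lambda))$.

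The proof then assembles as a composition: pipe dreams in $\RP(\sigma_k(S),S)$ of crossing-content $(c_r)_r$ biject with $k$-north-east fillings of $S$ of $+$-content $(b_r - c_r)_r$; by \cite[Theorem~14]{J2005} these are equinumerous with $k$-north-east fillings of $\lambda$ of the same $+$-content; and these in turn biject with pipe dreams in $\RP(\sigma_k(\lambda))$ of crossing-content $(c_r)_r$. I expect the only genuine obstacle to be the bookkeeping of the second paragraph, namely verifying that the complementary map is row-graded in the precise sense above and that the outside-$M$ elbows contribute no crossings; once that is established the corollary is a three-line composition of the stated results.
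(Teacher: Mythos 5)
Your proposal is correct and follows essentially the same route as the paper, which obtains the corollary by combining the complementary-map bijection of Theorem~\ref{th:moon fillings-pipe dreams} (together with Corollary~\ref{cor:living dream} on the Ferrers side) with Jonsson's row-refined equinumerosity \cite[Theorem~14]{J2005}. Your explicit bookkeeping --- that crossings in row $r$ number $b_r - p_r$, that boxes outside $M$ contribute only elbows, and that column permutation preserves the row box counts $b_r$ --- simply spells out details the paper leaves implicit.
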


		\subsection{The simplicial complex of north-east-fillings}
		We are now in position to prove Corollary~\ref{cor:1}. The canonical connection between $k$-north-east fillings and reduced pipe dreams can be used in the same way as described in the proof of \cite[Corollary~1.3]{Stu2010} for $k$-triangulations in this more general setting. For the necessary background on simplicial complexes and in particular on subword complexes, we refer to \cite{KM2004}. A box in a moon polyomino $M$ is called \Dfn{passive} if it is not contained in any north-east chain in $M$ of length $k+1$. Let $\Delta(M,k)$ be the simplicial complex with vertices being the collection of boxes in $M$, and with facets being $k$-north-east fillings of $M$.
		\begin{corollary}
			$\Delta(M,k)$ is the join of a vertex-decomposable, triangulated sphere and a full simplex of dimension $i-1$, where $i$ equals the number of passive boxes in $M$. In particular, it is shellable and Cohen-Macauley.
		\end{corollary}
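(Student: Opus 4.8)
The plan is to realize $\Delta(M,k)$ as a \emph{subword complex} in the sense of Knutson--Miller \cite{KM2004} and then to read off the asserted structure from their theory. Set $\sigma = \sigma_k(M)$ and let $Q$ be the word in the simple transpositions obtained by reading the boxes of $M$ in the order induced by its embedding into the staircase, each box contributing the reflection indexed by its antidiagonal. By the identification of reduced pipe dreams with reduced subwords \cite{KM2004,KM2005} together with Theorem~\ref{th:moon fillings-pipe dreams}, the reduced subwords of $Q$ spelling $\sigma$ are exactly the crossing sets of the pipe dreams in $\RP(\sigma,M)$. Under the complementary map a $k$-north-east filling is the complement inside $M$ of such a crossing set, so the facets of $\Delta(M,k)$ are precisely the facets $Q\setminus R$ of the subword complex $\Delta(Q,\sigma)$. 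Since the complementary map is inclusion-reversing, a subset of boxes is a face of $\Delta(M,k)$ if and only if its complement in $Q$ contains a reduced word for $\sigma$; hence $\Delta(M,k)=\Delta(Q,\sigma)$ as simplicial complexes. Invoking \cite{KM2004}, every subword complex is vertex-decomposable, hence shellable and Cohen--Macaulay, and is either a ball or a sphere, being a sphere precisely when the Demazure product $\delta(Q)$ equals $\sigma$.

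Next I would match the cone points of $\Delta(Q,\sigma)$, i.e.\ the vertices lying in every facet, with the passive boxes of $M$. If a box $b$ is passive it lies in no north-east chain of length $k+1$, so adjoining $b$ to any filling creates no forbidden chain; thus $b$ is a $+$ in every maximal filling and is a cone point. Conversely, a single box is itself a legal filling, so every box occurs as a $+$ in some maximal filling and is a vertex; and if $b$ is active, fixing a north-east chain of length $k+1$ through $b$ and extending its other $k$ boxes to a maximal filling produces a facet in which $b$ is a crossing, so $b$ is not a cone point. Hence the cone points are exactly the $i$ passive boxes, and these letters never occur in a reduced word for $\sigma$. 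Writing $Q'$ for the subword of active boxes, they split off as a full simplex,
$$\Delta(M,k)=\Delta(Q,\sigma)=\Delta(Q',\sigma)*\Delta_{i-1},$$
where $\Delta_{i-1}$ is the $(i-1)$-simplex on the passive boxes and the factor $\Delta(Q',\sigma)$ has no cone points.

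It remains to show that $\Delta(Q',\sigma)$ is a \emph{sphere} rather than a ball, and this is the main obstacle. By the Knutson--Miller dichotomy it suffices to prove $\delta(Q')=\sigma$; equivalently, as $\Delta(Q',\sigma)$ is already a vertex-decomposable ball or sphere, it suffices to show it has empty boundary, i.e.\ that it is a pseudomanifold in which every ridge lies in exactly two facets. Through the complementary map this is the \emph{flip property}: for every maximal $k$-north-east filling $F$ and every active $+$-box $b$ of $F$ there is a unique box $b'$ with $F\setminus\{b\}\cup\{b'\}$ again maximal. I want to stress that the mere absence of cone points does \emph{not} suffice here---there exist subword complexes that are balls without any cone point---so the argument must exploit the specific geometry: since $b$ is active it lies in a north-east chain of length $k+1$ inside a maximal rectangle of $M$, and the convexity and intersection-freeness of $M$ should pin down the flip partner uniquely, exactly as in the pipe-dream analysis of \cite{Stu2010} for $k$-triangulations. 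Granting the flip property, $\Delta(Q',\sigma)$ is a vertex-decomposable triangulated sphere, and since the join of such a sphere with a full simplex is again vertex-decomposable, hence shellable and Cohen--Macaulay, the displayed decomposition yields the statement.
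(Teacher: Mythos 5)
Your overall strategy coincides with the paper's: realize $\Delta(M,k)$ as a subword complex $\Delta(Q,\sigma_k(M))$ via the complementary map and Theorem~\ref{th:moon fillings-pipe dreams}, identify the passive boxes with the cone points, split them off as a full simplex, and appeal to Knutson--Miller for vertex-decomposability and the ball-or-sphere dichotomy. Your identification of the cone points with the passive boxes is correct and complete. The proposal is nonetheless incomplete at exactly the point you yourself flag: you never establish that the cone-point-free factor $\Delta(Q',\sigma)$ is a sphere rather than a ball, writing only that convexity and intersection-freeness ``should pin down the flip partner uniquely'' and then proceeding by ``granting the flip property.'' Your caution is well placed --- absence of cone points does not force sphericity; for instance $Q=(s_1,s_2,s_1,s_2,s_1,s_2)$ with $\pi=s_1s_2$ in $\S{3}$ yields a three-dimensional ball with six facets and no cone points --- but a conditional argument is not a proof, and this is the only genuinely nontrivial step of the corollary.

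The paper closes this step by a slightly different route than the one you sketch: instead of verifying the pseudomanifold (flip) property ridge by ridge, it invokes the boundary criterion of \cite[Theorem~3.7]{KM2004}, by which the subword complex is a sphere exactly when the Demazure product of the word equals $\sigma$, carried out as in the proof of \cite[Corollary~1.3]{Stu2010}. If you prefer your flip route, the missing lemma is concrete: for every $D\in\RP(\sigma,M)$ and every elbow of $D$ lying in an active box $b$, the two pipes meeting at $b$ must cross somewhere in $D$; by the mutation discussion in Section~\ref{sec:mutation} this is precisely what places the ridge $F(D)\setminus b$ in two facets, and it is where the maximal rectangles of $M$ and the definition of $\sigma_k(M)$ via its essential set must actually be used. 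Either way, you need to supply that argument; as written, the proposal asserts its conclusion rather than proving it.
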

		\begin{proof}
			Label the box $(i,j)$ by $i+j-1$. The simplicial complex $\Delta(M,k)$ is precisely the subword complex for the permutation $\sigma_k(M)$ and the word given by the labels of all boxes in $M$ (where $i$ and the simple transposition $s_i$ are identified) read row by row from east to west and from north to south. Observe that the passive boxes are exactly those boxes which are contained in all facets of $\Delta(M,k)$. Thus, the corollary follows from Theorem~\ref{th:fillings-pipe dreams} together with Theorems~2.5 and 3.7 in \cite{KM2004}.
		\end{proof}

		\subsection{A mutation-like operation on pipe dreams}\label{sec:mutation}
		Generalizing the notion in the previous section, one can define a pure simplicial complex $\Delta(\sigma)$ for any $\sigma \in \S{n}$ by defining the facets as the complements in the staircase of reduced pipe dreams in $\RP(\sigma)$ (see \cite{KM2004}). Using the property that two pipes in a reduced pipe dream $D$ cross at most once, one can define a mutation-like operation on facets of $\Delta(\sigma)$ as follows. One can mutate the facet $F(D)$ of $\Delta(\sigma)$ associated to $D$ at a vertex $b$ if the two pipes in $D$ which touch in $b$ cross somewhere else. In other words, one can mutate $F(D)$ at a vertex $b$ if the starting points $i < j$ of the two pipes in $D$ which touch in $b$ form an inversion of $\sigma$. The mutation of $F(D)$ at such a vertex $b$ is then defined to be the facet $F(D')$ for the reduced pipe dream $D'$ such that
		\begin{itemize}
			\item[(i)] the two turning pipes in $b$ are replaced in $D'$ by two crossing pipes,
			\item[(ii)] the unique crossing $b'$ of those two pipes is replaced in $D'$ by two turning pipes.
		\end{itemize}
		By construction, the pipe dream $D' = (D \cup b) \setminus b'$ is again in $\RP(\sigma)$ and thus its complement $F(D') = (F(D) \setminus b) \cup b'$ forms another facet of $\Delta(\sigma)$.
	\section{From pipe dreams to south-east fillings}

		In this section we describe a bijection between pipe dreams for $\sigma_k(\lambda)$ and $k$-south-east fillings of $\lambda$, for a Ferrers shape $\lambda$. For the sake of readability, we do this construction in several steps. A similar approach was described by Fomin--Kirillov \cite{FK1997}, and in the particular case of the permutation $[1,n,n-1,\ldots,3,2]$ by Woo \cite{Woo2004}.

		\subsection{From pipe dreams to flagged tableaux}\label{sec:flagged tableaux}

		Define a \Dfn{$k$-flagged tableau} as a semistandard tableau in which the entries in the $i$-th row are smaller than or equal to $i+k$, and denote the set of $k$-flagged tableaux of shape $\lambda$ by $\FT(\lambda,k)$. These were introduced by M.~Wachs \cite{Wac1985}, where she proves that the Schubert polynomial of a \Dfn{vexilliary permutation} is equal to a \Dfn{flagged Schur function} (see also Reiner--Shimozono \cite[Theorem 24]{RS1995}). We now present a bijection between the set $\RP(\sigma)$ of reduced pipe dreams of $\sigma = \sigma_k(\lambda)$ and the set $\FT(\mu,k)$ of $k$-flagged tableaux of shape $\mu = \D( \sigma)$. This bijection can be found in more generality in the work of C.~Lenart \cite[Section 4]{Len2004} for vexilliary permutations, but we include the full description in this particular case for the sake of completeness. For more on flagged tableaux and their connections to geometry, see, e.g., \cite{KMY2005}.

		For a reduced pipe dream $D \in \RP(\sigma)$ with $\sigma$ being of length $\ell$, define the \Dfn{reading biword} to be the $2 \times \ell$ array by reading $\binom{i}{i+j-1}$ for every crossing box $(i,j)$ in $D$ row by row from east to west and from north to south. See Figure~\ref{fig:biword} for an example.
		\begin{figure}
			$\begin{array}{ccc}
			\begin{minipage}{110pt}
			\begin{tikzpicture}[scale=1]
				\boxcollection{0.5}{(0,0)}{
					0/0/,1/0/,2/0/,3/0/$4$,4/0/$5$,5/0/,6/0/,7/0/,
					0/1/,1/1/$3$,2/1/,3/1/,4/1/,5/1/,
					0/2/$3$,1/2/$4$,2/2/$5$,3/2/$6$,4/2/,5/2/,
					0/3/,1/3/,2/3/,3/3/,4/3/,
					0/4/$5$,1/4/$6$,2/4/,3/4/,
					0/5/$6$,1/5/$7$,2/5/,3/5/,
					0/6/}
				\latticepath{0.5}{(0,-3)}{thick}{
					1/0/black,0/1/black,1/0/black,1/0/black,1/0/black,0/1/black,0/1/black,1/0/black,0/1/black,1/0/black,0/1/black,0/1/black,1/0/black,1/0/black,0/1/black,-8/0/black,0/-7/black}
				\content{0.5}{(0,-3.25)}{
					0/0/}
			\end{tikzpicture}
			\end{minipage}
			&&
			\begin{minipage}{220pt}
				\vspace*{-15pt}
				\[
				\left(
				\begin{array}{cccccccccccccccccccccc}
				1&1&2&3&3&3&3&5&5&6&6 \\
				5&4&3&6&5&4&3&6&5&7&6
				\end{array}
				\right)
				\]
			\begin{tikzpicture}[scale=1]
				\content{0.5}{(1.5,2)}{
					0/0/}
				\boxcollection{0.5}{(2,2)}{
					2/2/$3$,3/2/$4$,4/2/$5$,5/2/$6$,
					2/3/$4$,3/3/$5$,4/3/$6$,
					2/4/$5$,3/4/$6$,
					2/5/$6$,3/5/$7$}
				\latticepath{0.5}{(3,-0.5)}{thick}{
					1/0/black,1/0/black,0/1/black,0/1/black,1/0/black,0/1/black,1/0/black,0/1/black,-4/0/black,0/-4/black}
				\boxcollection{0.5}{(5,2)}{
					2/2/$1$,3/2/$1$,4/2/$2$,5/2/$3$,
					2/3/$3$,3/3/$3$,4/3/$3$,
					2/4/$5$,3/4/$5$,
					2/5/$6$,3/5/$6$}
				\latticepath{0.5}{(6,-0.5)}{thick}{
					1/0/black,1/0/black,0/1/black,0/1/black,1/0/black,0/1/black,1/0/black,0/1/black,-4/0/black,0/-4/black}
			\end{tikzpicture}
			\end{minipage}
			\end{array}$
			\caption{Labelling of the crossing boxes in the pipe dream in Figure~\ref{fig:pipe dream}(b), the corresponding compatible sequence, and its insertion and recording tableau.}
			\label{fig:biword}
		\end{figure}
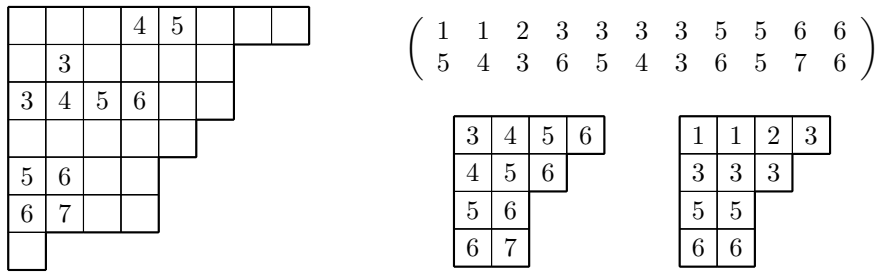
		It is known (and easy to check) that this gives a bijection between $\RP(\sigma)$ and the set of \Dfn{compatible sequences} $\CS( \sigma )$, defined by S.~Billey, W.~Jockush and R.~Stanley in \cite{BJS1993} as the set of all $2 \times \ell$ arrays of the form $\binom{a_1,\ldots,a_\ell}{b_1,\ldots,b_\ell}$ satisfying the following properties:
		\begin{enumerate}
			\item $a_1 \le a_2 \le \cdots \le a_{\ell}$,
			\item if $a_i = a_{i+1}$, then $b_i > b_{i+1}$,
			\item $b_1 b_2 \cdots b_{\ell}$ is a reduced word for $\sigma$, where $i$ denotes the simple transposition $s_i = (i,i+1)$, and
			\item $a_i \leq b_i$.
		\end{enumerate}
		One can see from the definition that a compatible sequence $t$ for $\sigma$ can be written as the concatenation $t = t_1 \cdots t_m$, where $t_i = \binom{i^{|w_i|}}{w_i}$, and $w_i$ is decreasing. Observe that $\sigma$ fixes all $j \leq k$ and thus, every letter in $w_i$ is larger than or equal to $\max(i,k)$.

		Define a map $\CS(\sigma) \rightarrow \FT(\mu,k)$ as follows. Let $t \in \CS(\sigma)$ be a compatible sequence for $\sigma$. Insert the letters of the word formed by the bottom row of $t$ using column Edelman--Greene insertion \cite{EG1987} into a tableau, while recording the corresponding letters from the first row. This produces an insertion tableau $P(t)$ and a recording tableau $Q(t)$. The image in $\FT(\mu,k)$ is now defined to be $Q(t)$. To prove that this is a well defined bijection, we need two preliminary lemmas (see \cite[Section 4]{Len2004}).
		\begin{lemma}\label{lem:insertion}
			All insertion tableaux $P(w)$ for reduced words $w$ of $\sigma = \sigma_k(\lambda)$ are equal. The shapes of $P(t)$ and $Q(t)$ are given by $\mu = \D(\sigma)$.
		\end{lemma}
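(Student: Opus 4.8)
The plan is to obtain both assertions from the theory of Edelman--Greene insertion for \emph{vexillary} (that is, $2143$-avoiding) permutations. The key structural input is the following standard package: a permutation $\sigma$ is vexillary if and only if all of its reduced words are Coxeter--Knuth (equivalently Edelman--Greene) equivalent, if and only if its Stanley symmetric function is a single Schur function $s_{\lambda(\sigma)}$; in that case column Edelman--Greene insertion carries every reduced word of $\sigma$ to one and the same insertion tableau, which has shape $\lambda(\sigma)$. Since the bottom row of a compatible sequence $t \in \CS(\sigma)$ is by definition a reduced word $w$ for $\sigma$, we have $P(t) = P(w)$ for that word, so the lemma reduces to the two statements that $\sigma = \sigma_k(\lambda)$ is vexillary and that $\lambda(\sigma) = \mu$.

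First I would verify that $\sigma_k(\lambda)$ is vexillary. By definition $\sigma_k(\lambda) = \1{k} \times \tau$ with $\tau = \sigma(\mu)$ dominant. Dominant permutations are $132$-avoiding, and since $2143$ contains $132$ as a pattern (the subsequence $2,4,3$), $\tau$ is in particular $2143$-avoiding, hence vexillary. It remains to see that prepending the $k$ fixed points $1,\dots,k$ creates no $2143$-pattern. The values at positions $1,\dots,k$ are the $k$ smallest values, arranged increasingly, so they can play the role of neither the \emph{$2$} nor the \emph{$1$} of such a pattern; as the four positions of a pattern occurrence are increasing, this forces the whole occurrence to lie at positions greater than $k$, that is, inside $\tau$. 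Therefore $\sigma_k(\lambda)$ is $2143$-avoiding, and the first assertion of the lemma follows from the vexillary package above.

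Next I would pin down the common shape. For a vexillary $\sigma$ the partition $\lambda(\sigma)$ indexing $F_\sigma = s_{\lambda(\sigma)}$, equivalently the shape of every Edelman--Greene insertion tableau, is obtained by sorting the code of $\sigma$ into weakly decreasing order. For $\sigma = \sigma_k(\lambda)$ the diagram $\D(\sigma)$ is, as noted right after the definition of $\sigma_k(\lambda)$, the Ferrers shape $\mu = \D(\tau)$ translated to have north-west corner $(k+1,k+1)$. Hence the code of $\sigma$ is $0,\dots,0$ ($k$ zeros) followed by the row lengths $\mu_1 \ge \mu_2 \ge \cdots$ of $\mu$, which is already a partition; sorting returns $\mu$ itself, so $\lambda(\sigma) = \mu = \D(\sigma)$. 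This gives the shape of $P(t)$, and since a recording tableau always has the same shape as its insertion tableau, $Q(t)$ has shape $\mu$ as well, completing both parts.

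Most of this is bookkeeping once vexillarity is established, so I expect the verification that $\sigma_k(\lambda)$ is vexillary to be the main point, with the shape identification following by inspection of the code. The one place to exercise care is conventional rather than mathematical: one must check that the \emph{column} variant of Edelman--Greene insertion used here returns $\mu$ and not its conjugate $\mu'$; with the conventions of \cite{Len2004} this is exactly the output, which is why we follow them.
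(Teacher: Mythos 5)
Your argument is correct, and its backbone — that $\sigma_k(\lambda)$ is vexillary, so all reduced words are Coxeter--Knuth equivalent and share one insertion tableau — is the same fact the paper invokes by citing \cite[Section~2.8.3]{Man2001} (there phrased via $\mu$ being a Ferrers shape with north-west corner at $(k+1,k+1)$). You add value by actually verifying $2143$-avoidance: the reduction to $132$-avoidance of the dominant factor $\tau$ plus the observation that the prepended fixed points $1,\dots,k$ cannot begin a pattern occurrence is clean and correct. Where you genuinely diverge is in identifying the shape. You read it off as the decreasing rearrangement of the code (which is $(0^k,\mu_1,\mu_2,\dots)$, sorting to $\mu$), whereas the paper exhibits the explicit reduced word given by the labelling $i+j-k-1$ of the boxes of $\mu$ and notes that it column-inserts into itself. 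Both work, but note what the paper's device buys: producing a concrete reduced word whose insertion tableau visibly has shape $\mu$ settles, by direct computation, exactly the transpose/convention ambiguity that you flag at the end and resolve only by deferring to the conventions of \cite{Len2004}. If you want your version to be self-contained on that point, the cheapest fix is to borrow that one sentence from the paper: column-insert the row-reading word of the labelling of $\mu$ and observe the output is $\mu$ itself, which pins down the convention without any appeal to the literature.
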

		\begin{proof}
			Since $\mu$ is a Ferrers shape where the north-west corner is located in box $(k+1,k+1)$, $P(t)$ only depends on $\sigma$ and not on the actual compatible sequence (see \cite[Section~2.8.3]{Man2001}). Moreover, the labelling $i+j-k-1$ for $(i,j) \in \mu$ gives a reduced expression for $\sigma$ which column inserts into itself (see \cite[Remark~2.1.9]{Man2001}). Therefore, the shapes of $P(t)$ and $Q(t)$ are both given by $\mu$.
		\end{proof}
		\begin{lemma}\label{le:rows}
			Let $\sigma = \sigma_k(\lambda)$ for a Ferrers shape $\lambda$ and let $t = t_1 t_2 \cdots t_m$ be a compatible sequence for $\sigma$. Every letter in $w_i \cdots w_m$ is strictly larger than $j$ if and only if every letter in the first $j-k$ rows of $Q(t)$ is strictly less than $i$.
		\end{lemma}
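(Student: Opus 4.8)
\emph{The plan is to} reduce the statement to a claim about the chain of shapes produced by the insertion, and then to control, via the anti-diagonal structure of $P(t)$ furnished by Lemma~\ref{lem:insertion}, in which row each newly created cell of $Q(t)$ appears. \emph{First I would} record the reformulation. Write $\mu^{(s)}$ for the shape of the partial insertion tableau after column-inserting the subword $w_1 \cdots w_s$, so that $\emptyset = \mu^{(0)} \subseteq \mu^{(1)} \subseteq \cdots \subseteq \mu^{(m)} = \mu$ is a chain of Young diagrams in which each step adds a (possibly empty) horizontal strip, since within a block the top row of $t$ is constant while the bottom row strictly decreases. Because $Q(t)$ records the block index, the cells of $Q(t)$ carrying an entry strictly less than $i$ are exactly the cells of $\mu^{(i-1)}$. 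Hence ``every letter in the first $j-k$ rows of $Q(t)$ is strictly less than $i$'' is equivalent to ``$\mu^{(i-1)}$ already contains every cell of $\mu = \D(\sigma)$ lying in matrix rows $k+1,\ldots,j$'', i.e.\ those rows are complete in $\mu^{(i-1)}$. So it suffices to prove that this holds if and only if every letter of $w_i \cdots w_m$ is strictly larger than $j$.

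For the ``only if'' direction \emph{I would} establish an upper bound on the row of a newly created cell: column Edelman--Greene insertion of a letter $b$ creates its new cell in a matrix row $\le b$. The transparent case is when $b$ is appended to the leftmost column: the new cell then sits at some matrix position $(r,k+1)$ and carries the value $b$ at the moment of creation, while by Lemma~\ref{lem:insertion} its final value is $r$; as cell values only decrease under later insertions, $r \le b$. The general bumping case reduces to this one using the anti-diagonal form $P(t)_{(a,b)} = a+b-k-1$: consecutive columns carry values differing by one, so the value deposited in the new cell, compared against the position--value formula, again forces row $\le b$. Granting this, if some letter of $w_i \cdots w_m$ were $\le j$ it would create, at a time $\ge i$, a cell in matrix row $\le j$; but those rows are already complete in $\mu^{(i-1)}$ and cannot grow, a contradiction.

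The ``if'' direction is the main obstacle, and here it is \emph{not} enough to examine cells one at a time: a cell in row $r$ may be created by a letter much larger than $r$ (the corner cell is a typical example), so the statement is about the timing of the blocks rather than about individual letters. \emph{The plan is to} prove, by induction on $s$, the invariant that the rows completed after inserting $w_1 \cdots w_s$ are precisely the matrix rows $k+1,\ldots,\min(w_{s+1}\cdots w_m)-1$; equivalently, the number of completed top rows equals the smallest letter still to be inserted, minus $1+k$. The two inputs are the upper bound above, which forces the letters of each block to fill non-full rows from the top, and the compatibility inequality $a_i \le b_i$ of condition~(4), which forces the letter responsible for completing a given row to occur in a sufficiently early block; together these rule out a low row being left incomplete while only large letters remain. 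Once this invariant holds, letters of $w_i \cdots w_m$ all $> j$ can create cells only in matrix rows $> j$, so rows $k+1,\ldots,j$ are already complete in $\mu^{(i-1)}$, as required.

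\emph{The hard part will be} the bookkeeping in this induction, namely matching the horizontal strip added by $w_{s+1}$ against the change in the minimal remaining letter, while keeping track of the transient values along bumping chains. This is precisely the step where the hypothesis $\sigma = \sigma_k(\lambda)$ (so that $P(t)$ is the clean anti-diagonal tableau of Lemma~\ref{lem:insertion}) together with the compatibility inequality must be used in an essential way, and I expect it to absorb most of the technical effort.
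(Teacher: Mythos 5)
Your reduction of the lemma to a statement about the chain of shapes $\mu^{(0)}\subseteq\cdots\subseteq\mu^{(m)}$ is correct, and your first direction rests on a row bound that is true, but the argument you give for it does not close. In the no-bumping case your inequality $r\le b$ is fine, but in the general case the new cell receives the \emph{last} value of the bumping chain, and your justification (``consecutive columns carry values differing by one'') is a property of the final tableau $P(t)$, whereas the bumps take place in the intermediate tableaux $P(t_1\cdots t_s)$. An intermediate column need not consist of consecutive values (each cell is only bounded below by its final value), so a single bump can increase the travelling value by more than $1$; once that happens the comparison between the deposited value and the position--value formula no longer forces $r\le b$. Ruling this out requires an inductive structural invariant on the intermediate tableaux that you have not formulated. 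The second direction is in worse shape: the invariant ``the completed rows of $\mu^{(s)}$ are exactly $k+1,\ldots,\min(w_{s+1}\cdots w_m)-1$'' is the entire content of that implication, you do not prove it, and you explicitly defer the bookkeeping as ``the hard part.'' Note also that for a general subshape of $\mu$ the set of completed rows need not be an initial segment, so even the well-posedness of your invariant needs an argument. As it stands this is a plausible strategy, not a proof.

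For comparison, the paper avoids analyzing the insertion dynamics altogether. It observes that ``every letter of $w_i\cdots w_m$ is $>j$'' means all occurrences of $1,\ldots,j$ lie in the prefix $w_1\cdots w_{i-1}$; since $w$ is a reduced word for $\sigma$, this is equivalent to the permutation of that prefix agreeing with $\sigma$ in its first $j$ positions; and because $\mu=\D(\sigma)$ is a Ferrers shape with north-west corner at $(k+1,k+1)$, this in turn is equivalent to the first $j-k$ rows of $Q(t)$ coinciding with those of $Q(t_1\cdots t_{i-1})$, which by definition contain only entries $<i$. If you want to salvage your combinatorial approach, the missing invariants are exactly what this algebraic detour buys for free; otherwise you should switch to the reduced-word argument.
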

		\begin{proof}
			Every letter in $w_i \cdots w_m$ is strictly larger than $j$ if and only if all of the occurrences of $1,2,\ldots, j$ in $w = w_1 \cdots w_m$ appear in $w_1 \cdots w_{i-1}$. Since $w$ is a reduced word for $\sigma$, this is equivalent to saying that the first $j$ letters of the permutation given by $w_1 \cdots w_{i-1}$ are the same as those in $\sigma$, when written in one line notation. Since $\mu = \D(\sigma)$ is a Ferrers shape where the north-west corner is located at the box $(k+1,k+1)$, this is equivalent to saying that every entry on the first $j-k$ rows of $Q(t)$ and of $Q(t_1 \cdots t_{i-1})$ coincide. As $Q(t_1 \cdots t_{i-1})$ contains only letters strictly smaller than $i$, the result follows.
		\end{proof}
		Putting the connections between reduced pipe dreams, compatible sequences and flagged tableaux together, we obtain the following theorem.
		\begin{theorem}
			Let $\sigma = \sigma_k(\lambda)$ for a Ferrers shape $\lambda$, and let $\mu = \D(\sigma)$. The map sending $D$ in $\RP(\sigma)$ to the recording tableau of the reading biword of $D$ is a bijection between $\RP(\sigma)$ and $\FT(\mu,k)$.
		\end{theorem}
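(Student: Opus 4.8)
The plan is to assemble the theorem from the two preceding lemmas together with the standard reversibility of Edelman--Greene insertion. Since the reading biword is already a bijection between $\RP(\sigma)$ and $\CS(\sigma)$, it suffices to prove that $t \mapsto Q(t)$ is a bijection from $\CS(\sigma)$ onto $\FT(\mu,k)$. Edelman--Greene insertion sends a biword to a pair $(P(t),Q(t))$ of tableaux of a common shape, with $P(t)$ increasing and $Q(t)$ semistandard, and this pair determines the biword by reverse insertion. By Lemma~\ref{lem:insertion}, $P(t)$ is a single fixed tableau independent of $t$, and both tableaux have shape $\mu = \D(\sigma)$. Consequently $t \mapsto Q(t)$ is injective, because the fixed $P$ together with $Q(t)$ recovers $t$; moreover its image consists of semistandard tableaux of shape $\mu$. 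Dropping the compatibility bound, the biwords satisfying only conditions (1)--(3) record, via this fixed $P$, exactly all semistandard tableaux of shape $\mu$, so the whole task reduces to identifying which of these correspond to sequences actually lying in $\CS(\sigma)$, and matching that condition with the $k$-flagging.

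The core step is to show that a biword satisfying (1)--(3) also satisfies the compatibility condition (4), namely $a_i \le b_i$, precisely when its recording tableau is $k$-flagged. Writing $t = t_1 \cdots t_m$ with $t_p = \binom{p^{|w_p|}}{w_p}$, condition (4) says exactly that every letter of each block $w_p$ is at least $p$. I would first reformulate this blockwise condition as the tail condition that for every $i$, every letter of $w_i \cdots w_m$ is strictly larger than $i-1$: the implication from the tail condition is immediate by taking $i=p$, and conversely, if (4) holds then a letter of $w_q$ with $q \ge i$ is at least $q \ge i > i-1$. I then apply Lemma~\ref{le:rows} with $j = i-1$: the tail condition at $i$ is equivalent to every entry in the first $i-1-k$ rows of $Q(t)$ being strictly less than $i$, that is, every entry in rows $1,\dots,r$ being at most $r+k$ where $r = i-1-k$. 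Letting $i$ (equivalently $r$) vary, condition (4) becomes the cumulative statement that for every $r$ all entries in the first $r$ rows of $Q(t)$ are at most $r+k$, which is in turn equivalent to the rowwise flagging condition that every entry in row $r$ is at most $r+k$. Hence $t \in \CS(\sigma)$ if and only if $Q(t) \in \FT(\mu,k)$.

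It remains to verify both directions of the bijection. Well-definedness and injectivity are in hand from the first paragraph combined with the equivalence just established. For surjectivity, I would start from an arbitrary $k$-flagged tableau $Q$ of shape $\mu$, pair it with the fixed insertion tableau $P$, and reverse-insert to obtain a biword $t$. The standard Edelman--Greene theory guarantees that $t$ satisfies (1), (2), and that its bottom row is a reduced word for $\sigma$ (condition (3)), since $P$ is the Edelman--Greene tableau of $\sigma$; and the equivalence of the second paragraph supplies condition (4), because $Q$ is flagged. Thus $t \in \CS(\sigma)$ and $Q(t)=Q$, which completes the proof.

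I expect the main obstacle to be the careful bookkeeping in the middle paragraph: aligning the index shifts in Lemma~\ref{le:rows} so that the passage from the blockwise form of condition (4) through its tail reformulation to the cumulative and rowwise forms of the flagging condition loses nothing, and confirming that reverse Edelman--Greene insertion against the fixed tableau $P$ genuinely reconstitutes all of conditions (1)--(3) rather than only some of them.
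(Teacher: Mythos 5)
Your proposal follows essentially the same route as the paper: reduce to showing that $t \mapsto Q(t)$ is a bijection from $\CS(\sigma)$ to $\FT(\mu,k)$, invoke Lemma~\ref{lem:insertion} for the common shape $\mu$ and the fixed insertion tableau, invoke Lemma~\ref{le:rows} to translate the compatibility bound $a_i \le b_i$ into the flagging condition, and finish by inverting Edelman--Greene insertion. Your index bookkeeping is correct (your choice $j=i-1$, i.e.\ $i=r+k+1$, matches the paper's application with $i=\ell+k+1$, $j=\ell+k$, and the cases $i\le k+1$ are vacuous since $\sigma$ fixes $1,\dots,k$). Where you go beyond the paper is surjectivity: the paper checks only the forward implication (compatible $\Rightarrow$ flagged) and dispatches bijectivity with the single remark that Edelman--Greene insertion can be inverted, whereas you prove the full equivalence ``condition (4) holds if and only if $Q(t)$ is $k$-flagged'' and use its converse to show that reverse insertion of an arbitrary flagged $Q$ against the fixed $P$ lands back inside $\CS(\sigma)$. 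The one point you must make explicit for that converse: you apply Lemma~\ref{le:rows} to a biword known only to satisfy conditions (1)--(3), while the lemma is stated for compatible sequences, so as written the argument is circular. The fix is immediate --- the proof of Lemma~\ref{le:rows} never uses condition (4), only the block structure from (1)--(2) and the fact that the bottom row is a reduced word for $\sigma$ --- but this observation needs to be stated, since otherwise the surjectivity step rests on a lemma invoked outside its hypotheses.
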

		\begin{proof}
			It is left to show that the map sending a compatible sequence $t$ to $Q(t)$ is a well defined bijection between $\CS(\sigma)$ and $\FT(\mu,k)$. Let $t \in \CS(\sigma)$. By Lemma~\ref{lem:insertion}, $Q(t)$ has shape $\mu$, and by Lemma \ref{le:rows} with $i = \ell+k+1$ and $j = \ell+k$, every letter in row $\ell$ in $Q(t)$ is less than or equal to $\ell+k$. Thus, $Q(t)$ is indeed a $k$-flagged tableau. Furthermore, the construction is bijective, since Edelman--Greene insertion can be inverted to obtain $t$.
		\end{proof}
		An example of the bijection can be seen in Figure \ref{fig:biword}.
		\subsection{A cyclic action on flagged tableaux}\label{sec:cyclic action}
		In this subsection we define a cyclic action on $k$-flagged tableaux. The \Dfn{flagged promotion} $\rho(Q)$ of a $k$-flagged tableau $Q$ is defined as follows.
			\begin{enumerate}
				\item[(i)] Delete all the instances of the letter $1$,
				\item[(ii)] apply jeu de taquin to the remaining entries,
				\item[(iii)] subtract $1$ from all the entries,
				\item[(iv)] label each empty box on row $i$ with $i+k$.
		\end{enumerate}
		One can easily see that $\rho(Q)$ is indeed a $k$-flagged tableau, since the empty boxes after step (iii) must form a horizontal strip, which means there is at most one empty box per column. Furthermore, as every box gets moved at most up by one row, and at the end one subtracts $1$ from all the entries, the tableau obtained after step (iii) is $k$-flagged as well. The argument is finalized with the observation that if one adds a horizontal strip in which every box gets added its maximum possible value, the tableau is still $k$-flagged, since the row-weakness is assured by the maximality of the value of the entries on each row, and the column-strictness is assured by the fact that the entries in row $i-1$ are all strictly less than the maximal value on row~$i$.
		\subsection{From flagged tableaux to fans of paths and south-east fillings}
			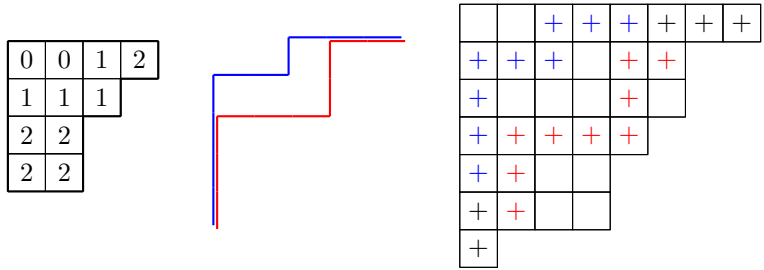
\begin{figure}
				$\begin{array}{ccccc}
					\begin{tikzpicture}[scale=1]
						\boxcollection{0.5}{(-2,2)}{
							2/2/$0$,3/2/$0$,4/2/$1$,5/2/$2$,
							2/3/$1$,3/3/$1$,4/3/$1$,
							2/4/$2$,3/4/$2$,
							2/5/$2$,3/5/$2$}
						\latticepath{0.5}{(-1,-0.5)}{thick}{
							1/0/black,1/0/black,0/1/black,0/1/black,1/0/black,0/1/black,1/0/black,0/1/black,-4/0/black,0/-4/black}
						\content{0.5}{(0,-1.15)}{
							0/0/}
					\end{tikzpicture}
					&&
					\begin{tikzpicture}[scale=1]
						\latticepath{0.5}{(-1.05,-0.5)}{thick}{
							0/1/red,0/1/red,0/1/red,1/0/red,1/0/red,1/0/red,0/1/red,0/1/red,1/0/red,1/0/red}
						\latticepath{0.5}{(-1.1,-0.45)}{thick}{
							0/1/blue,0/1/blue,0/1/blue,0/1/blue,1/0/blue,1/0/blue,0/1/blue,1/0/blue,1/0/blue,1/0/blue}
						\content{0.5}{(0,-0.65)}{
							0/0/}
					\end{tikzpicture}
					&&
					\begin{tikzpicture}[scale=1]
						\boxcollection{0.5}{(0,0)}{
							0/0/,1/0/,2/0/$\color{blue}+$,3/0/$\color{blue}+$,4/0/$\color{blue}+$,5/0/$\color{black}+$,6/0/$\color{black}+$,7/0/$\color{black}+$,
							0/1/$\color{blue}+$,1/1/$\color{blue}+$,2/1/$\color{blue}+$,3/1/,4/1/$\color{red}+$,5/1/$\color{red}+$,
							0/2/$\color{blue}+$,1/2/,2/2/,3/2/,4/2/$\color{red}+$,5/2/,
							0/3/$\color{blue}+$,1/3/$\color{red}+$,2/3/$\color{red}+$,3/3/$\color{red}+$,4/3/$\color{red}+$,
							0/4/$\color{blue}+$,1/4/$\color{red}+$,2/4/,3/4/,
							0/5/$\color{black}+$,1/5/$\color{red}+$,2/5/,3/5/,
							0/6/$\color{black}+$}
					\end{tikzpicture}
				\end{array}$
			\caption{Reverse plane partition corresponding to Figure \ref{fig:biword} and its corresponding $2$-fan of paths.}
			\label{fig:planepartitionpaths}
		\end{figure}
		We proceed as in \cite{FK1997} to obtain a reverse plane partition of height $k$ from a $k$-flagged tableau. Let $\lambda$ be a Ferrers shape and let $\mu = \D( \sigma_k( \lambda ) )$. Since every entry in row $i$ of a $k$-flagged tableau of shape $\mu$ is less than or equal to $i+k$ and greater than or equal to $i$ (as the tableau is semistandard), one can subtract $i$ from all the entries in row $i$, for all rows, and obtain a reverse plane partition of height $k$ and shape $\mu$, or equivalently, a $k$-fan of noncrossing north-east paths inside $\mu$. To obtain a bijection between $k$-flagged tableaux of shape $\mu$ and the set $\Fse(\lambda,k)$ of $k$-south-east fillings of the shape $\lambda$, one lifts the $i$-th path from the bottom by $i-1$ and turns it into a path of $+$'s inside $\lambda$. See Figure~\ref{fig:planepartitionpaths} for an example; the {\color{red}red} marks come from the red path, the {\color{blue}blue} from the blue path, and the additional black marks are contained in any $2$-south-east filling.

		Putting the described bijections together, we obtain Theorem~\ref{th:2}.
		\begin{theorem}
			Let $\lambda$ be a Ferrers shape. The composition of the described maps is a bijection between $\Fne(\lambda)$ and $\Fse(\lambda)$.
		\end{theorem}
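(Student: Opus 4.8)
The plan is to verify that the asserted bijection is nothing more than the composition of the bijections constructed across Section~\ref{sec:ne to pipe} and the present section, each of which has already been established. Fix the positive integer $k$ and write $\sigma = \sigma_k(\lambda)$ and $\mu = \D(\sigma)$.

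First, Theorem~\ref{th:fillings-pipe dreams} gives that the complementary map is a bijection $\Fne(\lambda,k) \to \RP(\sigma)$. Next, the reading biword furnishes a bijection $\RP(\sigma) \to \CS(\sigma)$ onto compatible sequences, as noted immediately after the definition of $\CS(\sigma)$. The theorem at the end of Section~\ref{sec:flagged tableaux} then yields the bijection $\RP(\sigma) \to \FT(\mu,k)$ sending $D$ to the recording tableau $Q(t)$ of its reading biword $t$. These are exactly the maps of the first subsection of this section.

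I would then invoke the construction described in the final subsection. Subtracting $i$ from each entry of row $i$ converts a $k$-flagged tableau of shape $\mu$ into a reverse plane partition of height $k$ on $\mu$, equivalently a $k$-fan of noncrossing north-east paths inside $\mu$; this map is patently invertible and hence bijective. Finally, lifting the $i$-th path from the bottom by $i-1$ and reading its boxes as $+$'s produces a $k$-south-east filling, and this is a bijection onto $\Fse(\lambda,k)$. Composing
$$\Fne(\lambda,k) \longrightarrow \RP(\sigma) \longrightarrow \CS(\sigma) \longrightarrow \FT(\mu,k) \longrightarrow \Fse(\lambda,k)$$
then gives the desired bijection.

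Since each constituent map is already a bijection, there is no substantive obstacle; the proof amounts to confirming that the codomain of each stage matches the domain of the next. The one point deserving care is the bookkeeping of shapes and flags: one must check that $\mu = \D(\sigma_k(\lambda))$ is the shape carried consistently through the flagged-tableaux stage, and that the flag bound ``entries of row $i$ are at most $i+k$'' is exactly what makes the reverse-plane-partition-to-filling step land the $k$ lifted paths inside $\lambda$ as a legitimate $k$-south-east filling.
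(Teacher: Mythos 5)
Your proposal is correct and matches the paper's own (very terse) argument: the theorem is obtained simply by composing the chain $\Fne(\lambda,k) \to \RP(\sigma) \to \CS(\sigma) \to \FT(\mu,k) \to \Fse(\lambda,k)$ of previously established bijections, exactly as you describe. The shape/flag bookkeeping point you flag is the only content beyond composition, and the paper treats it with the same level of detail.
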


		As mentioned in the introduction, $k$-triangulations of the $n$-gon can be seen as $k$-north-east fillings of the staircase shape $(n-1,\ldots,2,1)$, and $k$-fans of Dyck paths of length $2(n-2k)$ can be seen as $k$-south-east fillings of the same staircase (see e.g. \cite{Kra2006, Rub2006}). Thus, we obtain Theorem~\ref{th:1}. See Figure~\ref{fig:trianfan} for an example.
		\begin{corollary}
			In the case where $\lambda$ is the staircase shape $(n-1,\ldots,2,1)$, the described map is a bijection between $k$-triangulations of the $n$-gon and $k$-fans of noncrossing Dyck paths of length $2(n-2k)$.
		\end{corollary}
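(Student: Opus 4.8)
The plan is to read this corollary directly off the previous theorem by specializing $\lambda$ to the staircase $\delta := (n-1,\ldots,2,1)$ and then translating both ends of the resulting bijection through two classical dictionaries. Since $\delta$ is a Ferrers shape, the previous theorem applies verbatim and already provides a bijection between $\Fne(\delta,k)$ and $\Fse(\delta,k)$. Thus the only work left is to identify $\Fne(\delta,k)$ with $k$-triangulations of the convex $n$-gon, and $\Fse(\delta,k)$ with $k$-fans of noncrossing Dyck paths of length $2(n-2k)$; both identifications are well known (see \cite{Kra2006, Rub2006}), so I would state them precisely and then compose.

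For the north-east end I would recall the standard encoding of diagonals of the $n$-gon by boxes of $\delta$: labelling the vertices $1,\ldots,n$ cyclically, a diagonal $\{i,j\}$ is placed in the box whose row and column record its two endpoints, arranged so that two diagonals cross (their endpoints interleave) precisely when the corresponding two boxes form a north-east chain of length $2$. Under this dictionary a pairwise crossing set of diagonals becomes a north-east chain, the prohibition of $k+1$ mutually crossing diagonals becomes the avoidance of north-east chains of length $k+1$, and maximality of the diagonal collection translates into maximality of the filling. Hence $k$-triangulations of the $n$-gon are exactly the elements of $\Fne(\delta,k)$.

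For the south-east end I would invoke the construction of the previous subsection (from flagged tableaux to fans of paths). Removing the first $k$ rows and columns from $\delta$ leaves $\mu = \D(\sigma_k(\delta)) = (n-2k-1,\ldots,2,1)$, and by that construction a $k$-south-east filling of $\delta$ is the same datum as a reverse plane partition of height $k$ and shape $\mu$, that is, a $k$-tuple of noncrossing monotone lattice paths inside $\mu$. Reading such a path along the staircase boundary of $\mu$ turns it into a Dyck path, and since $\mu$ is the staircase with largest part $n-2k-1$ the resulting paths run from $(0,0)$ to $(n-2k,n-2k)$, hence have length $2(n-2k)$; moreover the noncrossing condition on the fillings is exactly the noncrossing condition on the fan. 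This identifies $\Fse(\delta,k)$ with $k$-fans of noncrossing Dyck paths of length $2(n-2k)$.

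Composing the previous theorem's bijection with these two identifications then yields the asserted bijection. I expect no conceptual obstacle here: the entire difficulty is bookkeeping, namely checking that the conventions (row versus column orientation, and which corner of the staircase anchors a diagonal or a path) are consistent across the two dictionaries and with the complementary map used throughout the paper, and in particular verifying that the semilength of the Dyck paths comes out to be exactly $n-2k$ rather than an off-by-one value, which is precisely where the passage from the shape $\mu$ of largest part $n-2k-1$ to paths of semilength $n-2k$ must be tracked carefully.
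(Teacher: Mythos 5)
Your proposal is correct and follows exactly the route the paper takes (the paper gives no separate proof, simply invoking the preceding theorem for the staircase together with the standard identifications of $k$-triangulations with $\Fne(\delta,k)$ and of $k$-fans of Dyck paths with $\Fse(\delta,k)$, citing the same references). Your additional bookkeeping on the dictionaries and the semilength $n-2k$ is consistent with the paper's conventions, e.g.\ with the example in Figure~\ref{fig:trianfan}.
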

		\begin{figure}
			\newcounter{intege}
			$\begin{array}{c}
			  \begin{tikzpicture}[scale=0.9]
				\coordinate (allo) at (0,0);
				\ngon{8}{(allo)}{2}
						{1/2/verylightgrey,2/3/verylightgrey,3/4/verylightgrey,4/5/verylightgrey,5/6/verylightgrey,6/7/verylightgrey,7/8/verylightgrey,8/1/verylightgrey,1/3/verylightgrey,2/4/verylightgrey,3/5/verylightgrey,4/6/verylightgrey,5/7/verylightgrey,6/8/verylightgrey,7/1/verylightgrey,8/2/verylightgrey,
						1/4/black,1/6/black,3/6/black,3/7/black,3/8/black,4/7/black}
					{6}{1,2,3,4,5,6,7,8}{first}{90}
			\end{tikzpicture}
			\quad
			\begin{tikzpicture}[scale=0.9]
				\boxcollection{0.5}{(0,0)}{
					0/0/$\color{lightgrey}+$,1/0/$\color{lightgrey}+$,2/0/$+$,3/0/,4/0/,5/0/$\color{lightgrey}+$,6/0/$\color{lightgrey}+$,
					0/1/$\color{lightgrey}+$,1/1/,2/1/$+$,3/1/$+$,4/1/$\color{lightgrey}+$,5/1/$\color{lightgrey}+$,
					0/2/$+$,1/2/,2/2/$+$,3/2/$\color{lightgrey}+$,4/2/$\color{lightgrey}+$,
					0/3/,1/3/,2/3/$\color{lightgrey}+$,3/3/$\color{lightgrey}+$,
					0/4/$+$,1/4/$\color{lightgrey}+$,2/4/$\color{lightgrey}+$,
					0/5/$\color{lightgrey}+$,1/5/$\color{lightgrey}+$,
					0/6/$\color{lightgrey}+$}
				\content{0.5}{(0,1)}{
					0/0/$1$,1/0/$2$,2/0/$3$,3/0/$4$,4/0/$5$,5/0/$6$,6/0/$7$,
					-1/1/$8$,-1/2/$7$,-1/3/$6$,-1/4/$5$,-1/5/$4$,-1/6/$3$,-1/7/$2$}
				\latticepath{0.5}{(0,-3)}{thick}{
					1/0/black,0/1/black,1/0/black,0/1/black,1/0/black,0/1/black,1/0/black,0/1/black,1/0/black,0/1/black,1/0/black,0/1/black,1/0/black,0/1/black,-7/0/black,0/-7/black}
				\content{0.5}{(0,-3)}{
					0/0/}
			\end{tikzpicture}
			\quad
			\begin{tikzpicture}[scale=0.9]
				\tpipedream{0.5}{(0,0)}{
					0/0/darkgrey/darkgrey,1/0/darkgrey/darkgrey,2/0/darkgrey/darkgrey,5/0/darkgrey/darkgrey,6/0/darkgrey/white,
					0/1/darkgrey/darkgrey,2/1/darkgrey/darkgrey,3/1/darkgrey/darkgrey,4/1/darkgrey/darkgrey,5/1/darkgrey/white,
					0/2/darkgrey/darkgrey,2/2/darkgrey/darkgrey,3/2/darkgrey/darkgrey,4/2/darkgrey/white,
					2/3/darkgrey/darkgrey,3/3/darkgrey/white,
					0/4/darkgrey/darkgrey,1/4/darkgrey/darkgrey,2/4/darkgrey/white,
					0/5/darkgrey/darkgrey,1/5/darkgrey/white,
					0/6/darkgrey/white}
				\cpipedream{0.5}{(0,0)}{
					3/0/darkgrey/darkgrey,4/0/darkgrey/darkgrey,
					1/1/darkgrey/darkgrey,
					1/2/darkgrey/darkgrey,
					0/3/darkgrey/darkgrey,1/3/darkgrey/darkgrey}
				\content{0.5}{(0,1)}{
					0/0/$1$,1/0/$2$,2/0/$3$,3/0/$4$,4/0/$5$,5/0/$6$,6/0/$7$,
                    -1/1/$1$,-1/2/$2$,-1/3/$6$,-1/4/$5$,-1/5/$4$,-1/6/$3$,-1/7/$7$,
                     3/1/$\color{brown}s_4$,4/1/$\color{brown}s_5$,
                     1/2/$\color{green}s_3$,
                     1/3/$\color{blue}s_4$,
                     0/4/$\color{red}s_4$,1/4/$\color{red}s_5$}
				\content{0.5}{(0,-2.9)}{
					0/0/}
			\end{tikzpicture}
			\\[10pt]

\begin{array}{c}
\vspace*{-65pt}\\
\scalebox{.8}{$ \left(
\begin{array}{ccccccccccccccc}
\color{brown}1 & \color{brown}1 & \color{green}2 & \color{blue}3 & \color{red}4 & \color{red}4\\
   \color{brown}s_5 & \color{brown}s_4 &
   \color{green}s_3 &
   \color{blue}s_4 &
   \color{red}s_5 & \color{red}s_4
\end{array}
\right) $}
 \\ \\
   \begin{array}{cc}
             \begin{tikzpicture}[scale=0.9]
                 \boxcollection{0.5}{(0,0)}{
                     0/0/$3$,1/0/$4$,2/0/$5$,
                     0/1/$4$,1/1/$5$,
                     0/2/$5$}
                 \latticepath{0.5}{(0,0.5)}{thick}{
                     3/0/black,0/-1/black,-1/0/black,0/-1/black,-1/0/black,0/-1/black,-1/0/black,0/3/black}
                 \content{0.5}{(0,-1.65)}{
                     0/0/}
             \end{tikzpicture}
&
            \begin{tikzpicture}[scale=0.9]
                \boxcollection{0.5}{(0,0)}{
                    0/0/$1$,1/0/$1$,2/0/$2$,
                    0/1/$3$,1/1/$4$,
                    0/2/$4$}
                \latticepath{0.5}{(0,0.5)}{thick}{
                    3/0/black,0/-1/black,-1/0/black,0/-1/black,-1/0/black,0/-1/black,-1/0/black,0/3/black}
                \content{0.5}{(0,-1.65)}{
                    0/0/}
            \end{tikzpicture}
\end{array}
\end{array}
\qquad
				\begin{tikzpicture}[scale=0.9]
				\content{0.5}{(0,0)}{
					0/0/$0$,1/0/$0$,2/0/$1$,
					0/1/$1$,1/1/$2$,
					0/2/$1$}
				\latticepath{0.5}{(-0.025,-2.1)}{thick}{
					0/3.2/blue,2/0/blue,0/1/blue,2.2/0/blue,-4.2/-4.2/black}
				\latticepath{0.5}{(0.025,-2.05)}{thick}{
					0/1/red,1/0/red,0/2/red,2/0/red,0/1/red,1/0/red}
				\content{0.5}{(-0.25,-2.15)}{
					0/0/{\scriptsize (0,0)}}
				\content{0.5}{(1.85,0.5)}{
					0/0/{\scriptsize (4,4)}}
			\end{tikzpicture}
			\qquad
			\begin{tikzpicture}[scale=0.9]
				\boxcollection{0.5}{(0,0)}{
					0/0/,1/0/,2/0/$\color{blue}+$,3/0/$\color{blue}+$,4/0/$\color{blue}+$,5/0/$\color{black}+$,6/0/$\color{black}+$,
					0/1/$\color{blue}+$,1/1/$\color{blue}+$,2/1/$\color{blue}+$,3/1/,4/1/$\color{red}+$,5/1/$\color{red}+$,
					0/2/$\color{blue}+$,1/2/,2/2/$\color{red}+$,3/2/$\color{red}+$,4/2/$\color{red}+$,
					0/3/$\color{blue}+$,1/3/,2/3/$\color{red}+$,3/3/,
					0/4/$\color{blue}+$,1/4/$\color{red}+$,2/4/$\color{red}+$,
					0/5/$\color{black}+$,1/5/$\color{red}+$,
					0/6/$\color{black}+$}
				\latticepath{0.5}{(0,-3)}{thick}{
					1/0/black,0/1/black,1/0/black,0/1/black,1/0/black,0/1/black,1/0/black,0/1/black,1/0/black,0/1/black,1/0/black,0/1/black,1/0/black,0/1/black,-7/0/black,0/-7/black}
			 \end{tikzpicture}
			\end{array}$
	\caption{An example of all the steps in the bijection: a $2$-triangulation of the $8$-gon, the $2$-north-east filling of $(7,\ldots,1)$, the pipe dream for $[1,2,6,5,4,3,7,8]$, the compatible sequence, the insertion and recording ($2$-flagged) tableau of shape $(3,2,1)$, the reverse plane partition of shape $(3,2,1)$, the $2$-fan of Dyck paths of length $8$, and finally the $2$-south-east filling of $(7,\ldots,1)$.}
	\label{fig:trianfan}
\end{figure}
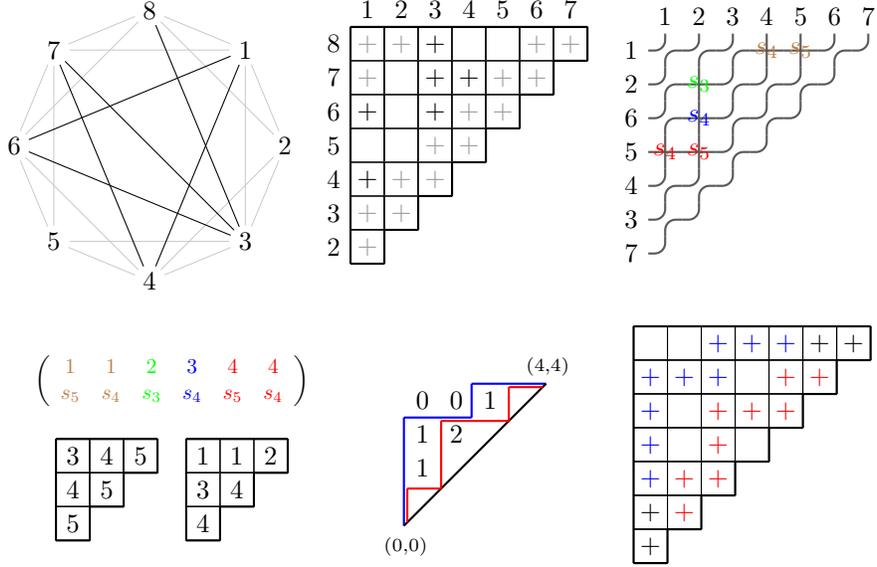

	\section{Properties of north-east fillings and $k$-triangulations}
		Using Theorem~\ref{th:moon fillings-pipe dreams}, we obtain several properties of $k$-north-east fillings of moon polyominoes and of Ferrers shapes and $k$-triangulations in particular. Some of them where already known while others where only conjectured.
		
		The first property was proved in the case of stack polyominoes by J.~Jonsson in \cite[Theorem~10]{J2005}. It follows immediately from Theorem~\ref{th:fillings-pipe dreams}.
		\begin{corollary}
			Every $k$-north-east filling of a moon polyomino $M$ contains $i$ many boxes where $i$ equals the total number of boxes in $M$ minus the length of $\sigma_k(M)$. In particular $i$ equals the number of boxes in the first $k$ rows and columns in the case of Ferrers shapes.
		\end{corollary}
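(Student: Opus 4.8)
The plan is to read off the count directly from the bijection established in Theorem~\ref{th:moon fillings-pipe dreams}. By that theorem, the complementary map identifies each $k$-north-east filling $F$ of $M$ with a reduced pipe dream $D \in \RP(\sigma_k(M), M)$. Under this identification the boxes of $M$ marked with a $+$ become turning pipes, while the unmarked boxes of $M$ become precisely the crossings of $D$; since all boxes of the ambient staircase lying outside $M$ are turning pipes as well, the crossings of $D$ are exactly the unmarked boxes inside $M$. Hence the number of $+$'s of $F$ equals $|M|$ minus the number of crossings of $D$, where $|M|$ denotes the total number of boxes in $M$.

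Next I would invoke the defining property of reduced pipe dreams: a reduced pipe dream for $\sigma$ has exactly $\ell(\sigma)$ crossings, because following the pipes reads off a reduced word for $\sigma$ whose length equals the number of crossings, and every reduced word for $\sigma$ has exactly $\ell(\sigma)$ letters. Combining this with the previous paragraph shows that every $k$-north-east filling of $M$ has exactly $|M| - \ell(\sigma_k(M))$ marked boxes. This quantity does not depend on the chosen filling, so it is the claimed constant $i$.

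Finally, for the ``in particular'' clause I would specialize to a Ferrers shape $\lambda$. As recorded in Lemma~\ref{lem:insertion}, $\D(\sigma_k(\lambda)) = \mu$, where $\mu$ is obtained from $\lambda$ by deleting its first $k$ rows and columns. Since the number of boxes in the diagram of any permutation equals its length, we get $\ell(\sigma_k(\lambda)) = |\mu|$, and therefore $i = |\lambda| - |\mu|$. But $|\lambda| - |\mu|$ is exactly the number of boxes of $\lambda$ lying in one of its first $k$ rows or first $k$ columns, which is the asserted description of $i$.

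There is essentially no obstacle here, as the statement reduces to bookkeeping once the bijection is in hand; the result is an immediate corollary. The single point demanding care is that all crossings of the associated pipe dream genuinely sit inside $M$, so that the unmarked boxes of $M$ account for every crossing and none is lost in the region outside $M$. This is guaranteed by the fact that Theorem~\ref{th:moon fillings-pipe dreams} lands in $\RP(\sigma_k(M), M)$ rather than in all of $\RP(\sigma_k(M))$, which matters precisely because for general moon polyominoes the containment of Corollary~\ref{cor:living dream} may fail.
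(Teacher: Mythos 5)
Your argument is correct and is exactly the reasoning the paper leaves implicit when it says the corollary ``follows immediately'' from the fillings--pipe dreams bijection: the complement of a filling is a reduced pipe dream with precisely $\ell(\sigma_k(M))$ crossings, all inside $M$, and in the Ferrers case $\ell(\sigma_k(\lambda)) = |\mu|$ with $\mu$ obtained by deleting the first $k$ rows and columns. The only nitpick is that the identity $\D(\sigma_k(\lambda)) = \mu$ comes from the definition of $\sigma_k(\lambda)$ in Section~\ref{sec:ne to pipe} rather than from Lemma~\ref{lem:insertion}, but this does not affect the argument.
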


		The second property is part of the main theorem in \cite[Theorem~1.4(i)]{PS2009} and concerns the star property as described as well in \cite{Stu2010}; for the notion used here, we refer as well to the latter.
		\begin{corollary}
			Every $k$-triangulation of the $n$-gon consists of exactly $n-2k$ $k$-stars.
		\end{corollary}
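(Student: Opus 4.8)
The plan is to deduce the statement from the bijection of Theorem~\ref{th:fillings-pipe dreams}, converting the geometric count of $k$-stars into a count of distinguished pipes.

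First I would make the permutation explicit. For the staircase $\lambda = (n-1,\ldots,2,1)$, deleting the first $k$ rows and columns leaves the staircase $\mu = (n-2k-1,\ldots,2,1)$, so $\tau = \sigma(\mu)$ is the longest element of $\S{n-2k}$ and
$$\sigma = \sigma_k(\lambda) = [\,1,\ldots,k,\ n-k,\, n-k-1,\ldots,k+1,\ n-k+1,\ldots,n\,] \in \S{n}.$$
Thus $\sigma$ reverses the block $B = \{k+1,\ldots,n-k\}$ of positions and fixes every value outside $B$; in particular $\sigma(B) = B$, so the pipes of any $D \in \RP(\sigma)$ split into the $2k$ \emph{outer} pipes, beginning at positions $1,\ldots,k$ and $n-k+1,\ldots,n$, and exactly $|B| = n-2k$ \emph{essential} pipes, each beginning and ending in $B$.

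The central step is to transport the notion of a $k$-star through the complementary map and to match $k$-stars with essential pipes. By Theorem~\ref{th:fillings-pipe dreams} a $k$-triangulation $T$, read as a $k$-north-east filling of $\lambda$, corresponds to a reduced pipe dream $D \in \RP(\sigma)$ whose elbow boxes record the diagonals of $T$. Following the construction of \cite{Stu2010}, I would show that each essential pipe, traced through $D$, singles out exactly one $k$-star of $T$, that this assignment is a bijection onto the set of $k$-stars, and that the $2k$ outer pipes, being $\sigma$-fixed at the extreme positions, contribute none. Granting this identification the corollary is immediate, since the number of essential pipes is $n-2k$ independently of $D$, and hence of $T$.

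The main obstacle is exactly this identification of $k$-stars with essential pipes: it is the one point at which the geometric definition of a $k$-star as a star polygon $\{2k+1/k\}$ (in the sense of \cite{PS2009,Stu2010}) must be reconciled with the combinatorial routing of a pipe, and it is where the content of those references is genuinely used. Should a self-contained argument be preferred, one could instead prove flip-invariance: the mutation operation of Subsection~\ref{sec:mutation} realizes the flips between adjacent $k$-triangulations, a single flip reorganizes the two $k$-stars incident to the flipped diagonal into two new ones and so preserves the total number of $k$-stars, and the flip graph is connected because $\Delta(\lambda,k)$ is a sphere by Corollary~\ref{cor:1}; evaluating this invariant on any one explicit $k$-triangulation then gives $n-2k$.
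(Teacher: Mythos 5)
Your proposal follows essentially the same route as the paper: identify $\sigma_k(\lambda) = \1{k} \times [n-2k,\ldots,1]$, split the pipes of a reduced pipe dream into $2k$ outer and $n-2k$ inner ones, and invoke the star property of \cite{Stu2010} to match each inner pipe with a $k$-star. The paper's proof is equally brief and likewise defers the pipe-to-star identification to \cite{Stu2010} (noting only the additional detail that each inner pipe has exactly $2k+1$ turns), so your main argument is the intended one; the flip-invariance alternative you sketch is not needed.
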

		\begin{proof}
			This follows from the description of $k$-triangulations in terms of $k$-north-east fillings of the staircase shape. As in this case $\sigma$ is given by $\1{k} \times [n-2k,\ldots,1]$, we obtain $2k$ outer pipe, as well as $n-2k$ inner pipes connecting $i$ with $i$ for $k < i \leq n-k$, and which contains exactly $2k+1$ turns. See Figure~\ref{fig:trianfan} for an example. This is exactly the star property in \cite{Stu2010} and thus completes the proof.
		\end{proof}
		Using the description of mutations for $k$-triangulations in Section~\ref{sec:mutation}, one can also describe the mutation of a facet in the simplicial complex
		$$\Delta_{n,k} := \Delta(\1{k} \times [n-2k,\ldots,1]).$$
		This mutation corresponds to removing a diagonal in a $k$-triangulation and replacing it by the unique other diagonal which gives a $k$-triangulation. This operation is called \emph{flip} in \cite[Theorem~1.4(iii)]{PS2009}.
		\begin{corollary}
			A facet $F$ in the simplicial complex $\Delta_{n,k}$ can be mutated at any vertex $d = (i,j) \in F$ for which $k < | i - j | < n - k$.
		\end{corollary}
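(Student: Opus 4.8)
The plan is to translate the flip/mutation condition on a vertex $d=(i,j)$ of a facet of $\Delta_{n,k}$ into the combinatorial criterion for mutating a facet established in Section~\ref{sec:mutation}. Recall from there that a facet $F(D)$ can be mutated at a vertex $b$ precisely when the starting points $p<q$ of the two pipes of $D$ meeting at $b$ form an inversion of $\sigma$; equivalently, those two pipes cross somewhere else in the staircase, so that condition (i)--(ii) of the mutation can be carried out. Here $\sigma = \1{k}\times[n-2k,\ldots,1]$, and under the complementary map a vertex $d=(i,j)$ of a facet corresponds to a box of the staircase $(n-1,\ldots,2,1)$, i.e.\ a turning box of the associated pipe dream. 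So the whole statement reduces to identifying, for each box $d=(i,j)$ of the staircase, the two pipes that turn at $d$ and checking when their starting labels form an inversion of $\sigma$.

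First I would record the one-line notation of $\sigma$ explicitly: $\sigma$ fixes $1,\ldots,k$ and $n-k+1,\ldots,n$, while on the middle block $k<m\le n-k$ it acts by $m\mapsto n+1-m$ (the reversal $[n-2k,\ldots,1]$ shifted by $k$). Thus the inversions of $\sigma$ are exactly the pairs $(p,q)$ with $p<q$ both lying in the interval $(k,n-k]$. Next I would determine, for a box $d=(i,j)$ in the staircase with $i+j\le n$ (so $d$ lies in the staircase $(n-1,\ldots,2,1)$), which two pipes turn there. Following the pipe-reading convention given in the paper, the two pipes meeting at a turning box $d=(i,j)$ are those entering from the top and from the left; tracing them back to their starting points on the boundary of the staircase identifies their starting labels in terms of $i$ and $j$. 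The natural guess, matching the star-property computation in the preceding corollary, is that these starting labels are governed by $i+j-1$ and its reflection, so that the inversion condition $p,q\in(k,n-k]$ becomes exactly $k<|i-j|<n-k$ after the change of coordinates from staircase boxes to $n$-gon diagonals.

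The key geometric step is the dictionary between a box of the staircase and a diagonal of the $n$-gon: under the identification of $k$-triangulations with $k$-north-east fillings of $(n-1,\ldots,2,1)$, the box in position $(i,j)$ of the staircase corresponds to the diagonal joining vertices whose cyclic distance is controlled by $i$ and $j$, and the relevant quantity is $|i-j|$. I would make this correspondence precise and verify that a diagonal $d$ is flippable (in the sense of \cite[Theorem~1.4(iii)]{PS2009}) exactly when its two endpoints are at cyclic distance strictly between $k$ and $n-k$, which is the condition $k<|i-j|<n-k$. The main obstacle I anticipate is bookkeeping: carefully tracing the two turning pipes at a given box back to their boundary labels, and reconciling the three coordinate systems in play (staircase boxes, pipe start-labels, and $n$-gon diagonals) so that the three conditions---``the two pipes cross elsewhere,'' ``the start labels form an inversion of $\sigma$,'' and ``$k<|i-j|<n-k$''---are seen to coincide. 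Once the pipe-tracing is pinned down, the equivalence with the inversion criterion of Section~\ref{sec:mutation} is immediate, and the flip interpretation follows directly from the mutation construction already justified there.
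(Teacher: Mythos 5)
Your proposal follows essentially the same route as the paper: compute the inversions of $\sigma = \1{k}\times[n-2k,\ldots,1]$ (exactly the pairs $p<q$ in $(k,n-k]$, so all $n-2k$ inner pipes mutually cross), and then apply the mutation criterion of Section~\ref{sec:mutation} after translating staircase boxes into diagonals of the $n$-gon. The paper's own proof is just as terse about the final pipe-tracing/coordinate bookkeeping as you are, so your plan is a faithful match.
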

		\begin{proof}
			The inversions of $\sigma = \1{k} \times [n-2k,\ldots,1]$ are given by all $(i,j)$ for which $k < i < j \leq n-k$. Thus, all $n-2k$ inner pipes in $D \in \RP(\sigma)$ mutually cross. It follows from Section~\ref{sec:mutation} that the facet corresponding to $D$ can be mutated at any vertex $(i,j)$ for which $k < | i - j | < n - k$.
		\end{proof}

			The next property of the constructed bijection will allow us to obtain a refined counting of $k$-triangulations, as conjectured by C.~Nicolas \cite{Nic2009}. Note that a diagonal $(i,j)$ for which $|i-j|\leq k$ or $|i-j|\geq n-k$ is contained in every $k$-triangulation of the $n$-gon. Thus, we define the degree of a vertex $i$ as the number of vertices $j$ adjacent to $i$ for which $k < |i-j| < n-k$.
			\begin{theorem}\label{th:degree}
				The degree of vertex $1$ in a $k$-triangulation is equal to the number of touching points of the lowermost Dyck path of its corresponding $k$-fan of Dyck paths with the main diagonal. Furthermore, each edge $(1,j)$ corresponds to the touching point with coordinates $(j-k-1,j-k-1)$.
			\end{theorem}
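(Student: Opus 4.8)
The plan is to follow a single edge $(1,j)$ through the entire composite bijection and to show that it is detected by one ``tight'' entry of the reading biword, which in turn controls exactly one return of the lowermost Dyck path to the diagonal. Throughout write $m=n-2k$.

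\emph{Reduction to the leftmost column.} In the staircase the diagonal $(1,j)$ is the box of the leftmost column lying in the row labelled $j$. The $2k$ forced edges at vertex $1$ fill the top $k$ and the bottom $k$ boxes of that column, so the degree of vertex $1$ is the number of $+$'s among the remaining $m-1$ boxes. Under the complementary map of Theorem~\ref{th:fillings-pipe dreams} a present edge becomes two turning pipes and an absent one a crossing, and a crossing in a box $(i,1)$ of the leftmost column contributes to the reading biword precisely the entry $\binom{i}{i}$. These are exactly the entries whose top and bottom agree, namely the occurrences of the letter $s_i$ inside the block $w_i$; note that $s_i$ is the smallest letter allowed at time $i$. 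Hence
$$\deg(1)=(m-1)-\#\{\,i : \tbinom{i}{i}\text{ occurs in the biword}\,\},$$
and it remains to match the values $i$ for which $\binom{i}{i}$ is \emph{absent} to the touching points.

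\emph{Touches from the recording tableau.} Since the lowermost path is the one lifted by $0$, it is the lower boundary of the level-$1$ filter of the reverse plane partition, i.e.\ of the set of cells of $\mu=\D(\sigma)$ whose value in $Q$ strictly exceeds their row index. The explicit lift-by-$(i-1)$ description of the fan shows that the irreducible components of this Dyck path have semilengths $g_1,g_2,\dots$, where $g_r$ is the number of such cells in row $r$; consequently the path returns to the main diagonal exactly at the partial sums $S_r=g_1+\cdots+g_r$. Thus there is a touch at $(c,c)$ with $1\le c\le m-1$ if and only if $c$ equals some $S_r$, while a \emph{non-touch} at $c$ means that $c$ lies strictly inside a component.

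\emph{The bridge.} What I would then prove is that $\binom{i}{i}$ is absent precisely when $c=n-k-i=j-k-1$ is one of the partial sums $S_r$. The entry $\binom{i}{i}$ encodes the minimal letter $s_i$ at time $i$, which is inserted last in the decreasing block $w_i$ and therefore descends to the first column under Edelman--Greene column insertion; so its presence or absence governs whether a new box is appended at the bottom of the first column at time $i$. Feeding $j=r+k$ into Lemma~\ref{le:rows} translates ``the first $r$ rows of $Q$ are completed by time $i$'' into a condition on the suffix $w_i\cdots w_m$, and comparing consecutive values of $r$ should convert the component sizes $g_r$ into the pattern of present and absent tight letters, yielding the bijection $(1,j)\leftrightarrow(j-k-1,j-k-1)$; summing over $j$ recovers the degree statement.

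The hard part is this last step: one must pin down precisely how the minimal letters $\binom{i}{i}$ propagate through column insertion into the shape of $Q$, and identify them with the gaps between the partial sums $S_r$. This is the only non-formal point, and it rests on the fixed superstandard shape of the insertion tableau furnished by Lemma~\ref{lem:insertion} together with a careful bookkeeping of Lemma~\ref{le:rows}; everything preceding it is a routine translation along the chain of maps.
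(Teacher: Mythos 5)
Your first step is sound and coincides with the paper's: the edge $(1,j)$ sits in box $(n+1-j,1)$ of the leftmost column, so it is present exactly when the ``tight'' column $\binom{n+1-j}{n+1-j}$ is absent from the reading biword, and the degree of vertex $1$ is $(m-1)$ minus the number of tight columns. But your second step contains a genuine error. The lowermost Dyck path of the fan is \emph{not} the boundary of the level-$1$ filter of the reverse plane partition (the cells with $Q(r,c)>r$); it is the boundary of the level-$k$ part, i.e.\ it records the cells where the reverse plane partition attains its maximal value $k$, equivalently where $Q(r,c)$ hits the flag bound $r+k$. You can check this against Figure~\ref{fig:trianfan}: there the reverse plane partition is $[0,0,1],[1,2],[1]$, its third row is $[1]\neq[0]$, yet the lowermost (red) path does touch the diagonal at $(1,1)$; it is the \emph{uppermost} (blue) path that is governed by the level-$1$ filter. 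Your characterization of the touching points as the partial sums $S_r=g_1+\cdots+g_r$ of the filter's row counts is also false: for the blue path of that figure the row counts are $1,2,1$ with partial sums $1,3,4$, yet the blue path meets the diagonal only at its endpoints. The correct statement is simply that the lowermost path touches at $(c,c)$, for $1\le c\le m-1$, if and only if row $m-c$ of the reverse plane partition contains no entry equal to $k$, i.e.\ every entry of row $m-c$ of $Q$ is at most $m-c+k-1$.

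The ``bridge'' you defer as the hard part is already done by Lemma~\ref{le:rows}, applied with both of its parameters equal to $i=n+1-j$: since every letter of $w_{i'}$ is at least $i'$, the absence of $\binom{i}{i}$ is equivalent to every letter of $w_i\cdots w_m$ being strictly larger than $i$, and the lemma converts this into the statement that every entry in the first $i-k$ rows of $Q$ is strictly less than $i$; hence row $r=i-k$ of the reverse plane partition has all entries strictly less than $k$, which is exactly the corrected touching condition at $(m-r,m-r)=(j-k-1,j-k-1)$. No further analysis of how the minimal letters propagate through Edelman--Greene column insertion is required --- the lemma packages all of it --- so planning to redo that bookkeeping by hand, and aiming it at the wrong target condition, is where your argument would fail.
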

			\begin{proof}
				Let $t$ be the compatible sequence corresponding to a $k$-triangulation $T$, decomposed into $t_1 \cdots t_m$, where $t_ i = \binom{i^{|w_i|}}{w_i}$ as in the definition. By construction, there is a diagonal $(1,j)$ if and only if the column $\binom{n+1-j}{n+1-j}$ does not appear in $t$. By virtue of this, and the fact that each letter in $w_i$ is larger than or equal to $i$, we have that every letter in $w_{n+1-j} \cdots w_m$ is strictly larger than $n+1-j$. By Lemma \ref{le:rows} with $i=n-j+1$, every letter in the first $n+1-j-k$ rows of $Q(T)$ is smaller than or equal to $n-j$. In particular, every letter in row $n+1-j-k$ of the corresponding reverse plane partition is strictly smaller than $k$. By construction, this implies that the lowermost path touches the diagonal at the point $(j-k-1,j-k-1)$. The converse follows clearly from the argument.
			\end{proof}
			Figure~\ref{fig:trianfan} shows an example for $k=2$, where vertex $1$ is connected to vertices $4$ and $6$, and the ({\color{red}red}) lowermost Dyck path touches the diagonal at positions
			\begin{align*}
			 (1,1) &= (4-2-1,4-2-1) \text{ and}\\
			 (3,3) &= (6-2-1,6-2-1).
			\end{align*}
			As described in \cite{Nic2009}, we use this theorem to prove Conjecture~2 therein. For an explicit expression for the determinant, we refer to \cite[Theorem~4]{Kra20062}.
			\begin{corollary}\label{cor:determinant}
				The number of $k$-triangulations of a convex $n$-gon having degree $d$ in a given vertex is given by the determinantal expression
				$$\det \left( \begin{array}{cccc}
													\Cat_{n-2k} & \cdots & \Cat_{n-k-2} & B_{n-k-1}^k(d)\\
			    	            	\vdots & \ddots & \vdots &\vdots\\
													\Cat_{n-k-1} & \cdots & \Cat_{n-3} & B_{n-2}^k(d)
			        	       \end{array}
				         \right),
				$$
				where $\Cat_\ell$ is the usual Catalan number, and where $B_\ell^k(d) = \frac{2k+d-3}{\ell} \binom{2\ell-2k-d+2}{\ell-1}$.
			\end{corollary}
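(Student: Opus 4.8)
The plan is to combine Theorem~\ref{th:degree} with the bijection of Theorem~\ref{th:1} and then evaluate the resulting count by the Lindstr\"om--Gessel--Viennot (LGV) lemma. By Theorem~\ref{th:degree}, the $k$-triangulations of the $n$-gon with degree $d$ at vertex $1$ are exactly those whose associated $k$-fan of noncrossing Dyck paths (of semilength $\ell := n-2k$) has lowermost path meeting the main diagonal in precisely $d$ interior points. So the task reduces to enumerating $k$-fans of noncrossing Dyck paths of semilength $\ell$ subject to this single constraint on the lowermost path.

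First I would recall the unconstrained enumeration. Shifting the $k$ weakly ordered Dyck paths in the standard way turns the noncrossing family into a family of vertex-disjoint up-right lattice paths, so that LGV applies and, with start and end points in general position, every surviving disjoint family realizes the identity matching. The resulting count is the $k\times k$ Catalan determinant $\det\big(\Cat_{\ell+a+b-2}\big)_{1\le a,b\le k}$, which reproduces the known enumeration of $k$-fans (equivalently, of $k$-triangulations). This already accounts for the first $k-1$ columns of the asserted matrix, since $\ell+a+b-2 = n-2k+a+b-2$ ranges over $\Cat_{n-2k},\dots,\Cat_{n-3}$.

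Next I would impose the degree constraint. Arrange the shifts so that the lowermost fan path corresponds to the extremal (bottommost) pair of start and end points, and regard its number of diagonal touches as a per-path statistic. By multilinearity of the determinant in the corresponding column, it then suffices to replace the Catalan entries $\Cat_m$ of that column (with $m=\ell+a+k-2$) by the number of single bottommost paths meeting their boundary in the prescribed number of points. Under the shift, ``$d$ interior diagonal touches'' translates into a fixed number of returns $j=2k+d-3$ of a single path of semilength $m$; by the classical cycle-lemma enumeration $\frac{j}{m}\binom{2m-j-1}{m-1}$ of Dyck paths of semilength $m$ with $j$ returns, this count equals $B_m^k(d)=\frac{2k+d-3}{m}\binom{2m-2k-d+2}{m-1}$. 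Reindexing $m=\ell+a+k-2$ with $\ell=n-2k$ places $B_{n-k-1}^k(d),\dots,B_{n-2}^k(d)$ in the last column, producing exactly the stated determinant; its explicit evaluation is not needed here and is recorded in \cite{Kra20062}.

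The hard part will be making the single-column replacement rigorous, since the number of diagonal touches is a \emph{global} statistic of the bottommost path and need not be preserved by the tail-swapping sign-reversing involution underlying LGV. I would handle this either by positioning the extremal path so that it cannot participate in a first crossing (so the involution acts only on the upper paths), or by the route of \cite{Nic2009}, and then verify two bookkeeping points: that the chosen shift indeed converts $d$ interior touches of the lowermost path into exactly $j=2k+d-3$ returns of the single path of semilength $m$, uniformly in the row index $a$, and that the remaining columns retain genuine Catalan entries. The degenerate small-$d$ values (where $j=2k+d-3\le 0$, so $B_m^k(d)$ vanishes) and the extreme values of $d$ must also be checked to be consistent. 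Once these are settled, substituting the constrained last column into the LGV determinant yields the claim.
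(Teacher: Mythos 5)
Your reduction is exactly the paper's: the only argument given for this corollary in the text is Theorem~\ref{th:degree}, which converts ``degree $d$ at a fixed vertex'' into ``the lowermost path of the associated $k$-fan touches the main diagonal in exactly $d$ interior points,'' after which the determinantal enumeration of such fans is simply quoted from \cite{Nic2009} (it is how Conjecture~2 there is phrased) together with \cite[Theorem~4]{Kra20062}. So the portion of your write-up that the paper actually argues, you reproduce correctly, and your identification of $B^k_m(d)$ with the return-counting formula $\frac{j}{m}\binom{2m-j-1}{m-1}$ at $j=2k+d-3$ is algebraically right.

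The LGV material you add is your attempt to prove what the paper only cites, and the two issues you flag there are genuine, not cosmetic. First, the single-column replacement does not follow from multilinearity plus the usual involution: tail-swapping can alter the path attached to the bottom endpoint, so the return statistic is not preserved on intersecting families and the signed sum does not automatically localize to the last column; closing this is precisely the content of the cited references, and your two proposed fixes are sketches, not arguments. Second, the translation ``$d$ interior touches $\mapsto 2k+d-3$ returns of a single shifted path of semilength $m$, uniformly in the row index $a$'' does not follow from Theorem~\ref{th:degree}: the unshifted lowermost path has semilength $n-2k$ and $d$ interior touches, hence $d+1$ returns, so the offsets in both the return count and the semilength must be produced by the specific LGV shift you choose and verified. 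Note that $2k+d-3=d+1$ exactly when $k=2$, so checking against Figure~\ref{fig:trianfan} alone would not detect an error here. As it stands your proof of the determinant itself is a plan; either carry out these two verifications or, as the paper does, cite \cite{Nic2009} and \cite{Kra20062} for them.
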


		\subsection{Rotation of the $n$-gon and a CSP for flagged tableaux}
			There is a natural cyclic action $\rho$ on $k$-triangulations given by rotating the vertex labels in the $n$-gon counterclockwise. The following conjecture is due to V.~Reiner~\cite{Re2009}.
			\begin{conjecture}[V.~Reiner]\label{con:CSP1}
				Let $\lambda$ be the staircase shape $(n-1,\ldots,2,1)$ and let $k$ be a positive integer. The triple
				$$\Big( \Fne(\lambda,k), \langle \rho \rangle, F(q) \Big)$$
				exhibits the cyclic sieving phenomenon (CSP) as described in \cite{RSW2004}.
			\end{conjecture}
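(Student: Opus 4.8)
The plan is to prove the equivalent reformulation Conjecture~\ref{con:CSP2} in the language of flagged tableaux and then transport it back through the bijection underlying Theorem~\ref{th:1}. The first and most delicate step is to verify that, under the composite bijection $\Fne(\lambda,k)\to\RP(\sigma_k(\lambda))\to\CS(\sigma_k(\lambda))\to\FT(\mu,k)$ of Sections~\ref{sec:ne to pipe}--\ref{sec:flagged tableaux} (with $\mu=\D(\sigma_k(\lambda))$), the rotation $\rho$ of the $n$-gon corresponds exactly to the flagged promotion $\rho$ of Section~\ref{sec:cyclic action}. Since a single counterclockwise rotation cyclically relabels the boundary of the staircase filling, the natural expectation is that on the flagged-tableau side it deletes the instances of the smallest letter, applies jeu de taquin, shifts the remaining entries, and reinserts a maximal entry in each vacated box -- precisely the recipe defining flagged promotion. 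I would make this rigorous by tracking one rotation through Edelman--Greene insertion and through the reverse-plane-partition normalization, comparing the two actions on the touching-point data of Theorem~\ref{th:degree}, which records exactly how the boundary edges incident to vertex~$1$ are permuted.

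Second, I would exhibit $F(q)$ as an honest generating function $F(q)=\sum_{T\in\FT(\mu,k)}q^{\operatorname{stat}(T)}$ for a suitable charge-type statistic, so that $F(1)=|\FT(\mu,k)|$ recovers Jonsson's count and all coefficients are nonnegative integers. Because $F(q)$ is a product of ratios of $q$-integers, its values $F(\zeta^{d})$ at the $n$-th roots of unity $\zeta=e^{2\pi i/n}$ can be computed directly by the standard root-of-unity evaluation of $q$-integer quotients, and in each case this should collapse to a smaller product formula. The cyclic sieving phenomenon then amounts to the identity $\#\{T:\rho^{d}(T)=T\}=F(\zeta^{d})$ for every $d$, where $\rho$ has order $n$ and the fixed points are the $\tfrac{n}{\gcd(n,d)}$-fold rotationally symmetric $k$-triangulations.

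The real work lies in this fixed-point count, which I expect to be the main obstacle. For $k=1$ the statement is the classical polygon-rotation CSP of Reiner--Stanton--White, and the natural strategy is to generalize their enumeration of symmetric dissections to $k$-triangulations, using the star structure (every $k$-triangulation has exactly $n-2k$ $k$-stars) to organize the rotation-symmetry classes and to match them with the cyclotomic evaluation of $F(q)$. The difficulty is that there is no known module on which $F(q)$ is a graded trace and promotion acts as a cyclic generator -- the analogue, for flagged staircase tableaux, of Rhoades' Kazhdan--Lusztig cellular model for rectangular standard tableaux. One must therefore either construct such an algebraic model, so that the CSP follows from a Springer-type character evaluation at regular elements, or carry out the symmetric-$k$-triangulation enumeration combinatorially and check it against $F(\zeta^{d})$ directly; in either route the essential content is the same hard fixed-point count, and this is where I expect the proof to require genuinely new input.
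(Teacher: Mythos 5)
There is a genuine gap, and you name it yourself: the fixed-point identity $\#\{T:\rho^{d}(T)=T\}=F(\zeta^{d})$ is never established, and that identity \emph{is} the cyclic sieving phenomenon. A proposal whose final paragraph says the essential step ``is where I expect the proof to require genuinely new input'' is a research plan, not a proof. Concretely: (i) you do not actually construct the statistic realizing $F(q)$ as a generating function, nor verify that $F(q)$ has nonnegative integer coefficients; (ii) the root-of-unity evaluations of $F(q)$ are asserted to ``collapse to a smaller product formula'' but are not computed; (iii) the enumeration of rotationally symmetric $k$-triangulations (or, equivalently, of flagged tableaux fixed by powers of flagged promotion) is not carried out, and no algebraic model in the style of Rhoades is constructed. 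Without (ii) and (iii) there is nothing to compare, so the CSP is not proved for any $k>1$.

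For context: the statement you were asked to prove is stated in the paper as an open conjecture, and the paper does not prove it either. What the paper \emph{does} prove is precisely your first step --- that under the composite bijection $\Fne(\lambda,k)\to\RP(\sigma_k(\lambda))\to\CS(\sigma_k(\lambda))\to\FT(\mu,k)$ the rotation of the $n$-gon corresponds to flagged promotion, which yields only the equivalence of Conjecture~\ref{con:CSP1} with Conjecture~\ref{con:CSP2}. Your sketch of that step (tracking one rotation through Edelman--Greene insertion; the paper does this by following how the columns $\binom{n+1-j}{n+i-j}$ of the compatible sequence transform and comparing with ordinary promotion of the recording tableau) is consistent with the paper's argument. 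But reducing one conjecture to an equivalent one does not prove either, so the proposal does not establish the statement.
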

			We can describe the cyclic action on $k$-triangulations induced by rotation in terms of the cyclic action on flagged tableaux as defined in Section~\ref{sec:cyclic action}.
			\begin{theorem}
				The constructed bijection maps the cyclic action on $k$-triangulations to the cyclic action given by flagged promotion on flagged tableaux.
			\end{theorem}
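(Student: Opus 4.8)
The plan is to use that $\langle\rho\rangle$ is cyclic, so it suffices to check that the composite bijection behind Theorems~\ref{th:1} and~\ref{th:2} carries one counterclockwise rotation of the $n$-gon to one application of the flagged promotion $\rho$ defined in Section~\ref{sec:cyclic action}; equivariance for the whole group then follows by iteration. I would trace a single rotation through the four steps in turn: the $k$-triangulation becomes a $k$-north-east filling of the staircase $(n-1,\ldots,2,1)$, the filling becomes a reduced pipe dream for $\sigma=\1{k}\times[n-2k,\ldots,1]$ under the complementary map, the pipe dream becomes a compatible sequence $t=t_1\cdots t_m$ (with $t_i=\binom{i^{|w_i|}}{w_i}$, each $w_i$ decreasing) via the reading biword, and finally $t$ becomes the recording tableau $Q(t)\in\FT(\mu,k)$, $\mu=\D(\sigma)$, via column Edelman--Greene insertion.

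First I would make rotation explicit at the combinatorial level: on diagonals it is the cyclic shift $(i,j)\mapsto(i{-}1,j{-}1)$ of labels, so the only relevant diagonals (those with $k<|i-j|<n-k$) that change status are the ones incident to the old vertex~$1$, which rotate out, and the ones that become incident to the new vertex~$1$, which rotate in. Pushing this through the complementary map and the reading biword, I would show that rotation acts on the compatible sequence as a genuine cyclic operation: the first block $t_1$ is deleted, every first-row label is decremented by one so that the old block $t_{i+1}$ becomes the new block~$i$, and the letters recording the newly incident diagonals reappear, redistributed among the larger labels. This is precisely the reduced-word shadow of the skeleton ``delete the $1$'s, subtract~$1$, refill'' of $\rho$.

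Next I would invoke the compatibility of column Edelman--Greene insertion with this word operation. Deleting the label-$1$ block $t_1$ and decrementing the remaining labels corresponds on $Q(t)$ exactly to deleting all instances of the letter~$1$ (which by the flag condition occur only in row~$1$), applying jeu de taquin to the remainder, and subtracting~$1$ from every entry, that is, steps (i)--(iii) of flagged promotion. The bookkeeping that makes the decrement of labels match the subtraction of entries is exactly Lemma~\ref{le:rows}, which already equates ``every letter of $w_i\cdots w_m$ exceeds~$j$'' with ``every entry in the first $j-k$ rows of $Q(t)$ is smaller than~$i$''; running this equivalence over all $i,j$ pins down the effect of the shift on each row of the recording tableau.

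The main obstacle will be matching the redistributed letters with step~(iv), the refilling of each box vacated by the jeu de taquin on row~$i$ with its maximal flagged value $i+k$. Because the refilled value depends on the row, the deleted block cannot simply be re-appended at the end: the letters must redistribute cyclically, and I must show they land exactly in the vacated horizontal strip with the correct maximal values. Here I would use Theorem~\ref{th:degree} and its touching-point dictionary, whereby the diagonals that rotate in to meet the new vertex~$1$ correspond, under the path model, precisely to the touching points that promotion creates on the lowermost path; the flag condition together with reducedness of the word for $\sigma=\1{k}\times[n-2k,\ldots,1]$ then forces the entry in each vacated row-$i$ box to be \emph{exactly} $i+k$ rather than merely at most $i+k$. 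Establishing that the redistributed letters form a valid compatible sequence filling exactly this strip --- well defined, decreasing within each new block, and of the correct multiplicities in each row --- is the technical heart of the argument; once it is in place the composite image of the rotated triangulation equals $\rho(Q(t))$, and the theorem follows.
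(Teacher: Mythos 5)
Your setup is sound and, up to the last step, parallels the paper's argument: the paper likewise computes the effect of one rotation column by column on the compatible sequence (a column $\binom{a}{b}$ with $a\ge 2$ becomes $\binom{a-1}{b}$, while a column $\binom{1}{j}$, encoding a non-edge at vertex $n$, becomes $\binom{n-j}{n-j}$ --- note that the \emph{bottom} letter changes as well, which your phrase ``redistributed among the larger labels'' glosses over), and then invokes the known compatibility of Edelman--Greene recording tableaux with cycling the biword, which accounts for steps (i)--(iii) of flagged promotion. Up to there you and the paper agree.

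The gap is in your treatment of step (iv). Theorem~\ref{th:degree} cannot do the job you assign to it: it records only, for each $j$, whether \emph{some} entry of a prescribed row of the reverse plane partition equals $k$ (equivalently, whether the lowermost Dyck path touches the main diagonal at a given point); that is, it sees only the edges at vertex $1$, i.e.\ the first column of the pipe dream. It does not locate the vacated horizontal strip, says nothing about vacated boxes not governed by the lowermost path, and so cannot force ``the entry in each vacated row-$i$ box is exactly $i+k$.'' Moreover it is applied to $\rho(T)$, whose tableau is exactly what you are trying to compute, so it yields only a small consistency check, not a determination. What the paper does instead is purely word-level: ordinary promotion corresponds to moving each column $\binom{1}{j}$ to the end of the biword as $\binom{n}{n-j}$, whereas the rotated triangulation places that column at the position of $\binom{n-j}{n-j}$; since every bottom-row letter between those two positions is at least $n-j+2$, the letter $n-j$ commutes past all of them, so by Lemma~\ref{lem:insertion} the insertion tableau --- hence the shape and the box in which this column is recorded --- is unchanged, and only the recorded value changes from $n$ to $n-j$, which is precisely the flagged refill of step (iv). You need this commutation argument, or an equivalent substitute; the touching-point dictionary of Theorem~\ref{th:degree} is strictly weaker information.
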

			\begin{proof}
				Let $T$ be a triangulation with compatible sequence $t$, and recording tableau $Q = Q(t)$. First, we describe the compatible sequence $\rho(t)$ of $\rho(T)$, then we describe the compatible sequence for the flagged promotion $\rho(Q)$, and finally we show that they are the same.

				Note that any non-edge $(i,j)$ in $T$, with $i < j$, gets encoded in the compatible sequence as a column $\binom{n+1-j}{n+i-j}$. In particular, the non-edges $(i,n)$ get encoded as $\binom{1}{i}$. For $j < n$, the switch from the non-edge $(i,j)$ in $T$ to $(i+1, j+1)$ in $\rho(T)$ corresponds to turning the column $\binom{n+1-j}{n+i-j}$ in $t$ into the column $\binom{n-j}{n+i-j}$ in $\rho(t)$. Likewise, the switch from the non-edge $(i,n)$ in $T$ to $(1,i+1)$ in $\rho(T)$ corresponds to turning the column $\binom{1}{i}$ into the column $\binom{n-i}{n-i}$, and placing it in the right place to make sure $\rho(t)$ is a compatible sequence.

				Let $\tilde{Q}$ be the ordinary promotion of $Q$. It is well known by the relationship between promotion and Edelman--Greene insertion, that $\tilde{Q}$ is the recording tableau of the biword obtained by turning each column of the form $\binom{i}{j}$ into the column $\binom{i-1}{j}$, for $2 \le i \le n$, and turning each column of the form $\binom{1}{j}$ into one of the form $\binom{n}{n-j}$, and placing them at the end of the compatible sequence in reverse order. This is the same transformation as described in the above paragraph, except for columns $\binom{1}{j}$ which got mapped into columns $\binom{n-j}{n-j}$ instead. We now proceed to slide these columns of the form $\binom{n}{n-j}$ towards the position of the column $\binom{n-j}{n-j}$ one by one, starting from the left while adjusting the element in the top row accordingly, and show that this transforms $\tilde{Q}$ into $\rho(Q)$. Notice that every letter in the bottom row between these two columns is greater than or equal to $n-j+2$, which means they all commute with $n-j$. Thus, the bottom row is still a reduced word for $\sigma$. Lemma~\ref{lem:insertion} implies that that all reduced expressions for $\sigma$ have the same insertion tableau. Since we now record $n-j$ instead of $n$ for those columns, this procedure transforms $\tilde{Q}$ into $\rho(Q)$.
			\end{proof}
			Using this connection, we obtain the following corollary.
			\begin{corollary}
				Conjecture~\ref{con:CSP2} is equivalent to Conjecture~\ref{con:CSP1}.
			\end{corollary}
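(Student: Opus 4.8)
The plan is to deduce the equivalence formally from the theorem just established, which supplies an \emph{equivariant} bijection. That theorem constructs a map $\Phi$ sending $k$-triangulations of the $n$-gon --- equivalently, the elements of $\Fne(\lambda,k)$ for the staircase $\lambda = (n-1,\ldots,2,1)$ --- to the $k$-flagged tableaux in $\FT(\lambda,k)$, and it intertwines rotation of the $n$-gon with flagged promotion, i.e. $\Phi \circ \rho = \rho \circ \Phi$. Since both Conjecture~\ref{con:CSP1} and Conjecture~\ref{con:CSP2} are stated with the \emph{same} counting polynomial $F(q)$, the equivalence will follow from the general principle that the cyclic sieving phenomenon is transported across equivariant bijections sharing a common polynomial.

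First I would recall what the CSP asserts. For a finite set $X$ acted on by a cyclic group $\langle c \rangle$ of order $N$ and a polynomial $f(q) \in \Z[q]$, the triple $(X, \langle c \rangle, f)$ exhibits the CSP precisely when, for every integer $d$, the number of elements of $X$ fixed by $c^d$ equals $f(\omega^d)$, where $\omega$ is a fixed primitive $N$-th root of unity. Thus the CSP is a statement purely about the fixed-point counts of the powers of $c$ together with the polynomial $f$.

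Next I would observe that an equivariant bijection preserves exactly this data. Because $\Phi \circ \rho = \rho \circ \Phi$, for every $d$ the map $\Phi$ restricts to a bijection between the $\rho^d$-fixed $k$-triangulations and the $\rho^d$-fixed $k$-flagged tableaux; hence these fixed-point counts agree for all $d$. Equivariance also forces the two actions to be conjugate permutations, so they generate cyclic groups of the same order $N$ (namely $N = n$, the order of rotation), and the primitive $N$-th roots of unity used on the two sides coincide. Since $F(q)$ is identical in both conjectures, the defining equalities $\#\{\rho^d\text{-fixed triangulations}\} = F(\omega^d)$ and $\#\{\rho^d\text{-fixed flagged tableaux}\} = F(\omega^d)$ hold simultaneously for all $d$, or fail simultaneously. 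Therefore Conjecture~\ref{con:CSP2} holds if and only if Conjecture~\ref{con:CSP1} does.

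The argument has essentially no hard step: all the genuine content already resides in the preceding theorem, which establishes the intertwining of rotation and flagged promotion. The only points demanding a moment's care are the bookkeeping facts that the polynomial $F(q)$ in the two conjectures is literally the same and that flagged promotion and rotation have the same order --- both immediate, the former by inspection of the two statements and the latter from the conjugacy supplied by $\Phi$.
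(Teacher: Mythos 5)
Your proposal is correct and matches the paper's (implicit) argument: the paper derives the corollary directly from the preceding theorem, which establishes that the bijection intertwines rotation with flagged promotion, and the equivalence then follows because the CSP is determined by fixed-point counts of the cyclic action together with the common polynomial $F(q)$. You have simply spelled out the standard transport-of-CSP-along-an-equivariant-bijection step that the paper leaves unstated.
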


		\section{A positivity result for Schubert polynomials}

			In this section we use the results about moon polyominoes in Section~\ref{sec:moon} to obtain a property of Schubert polynomials. It was shown in \cite{FK1996} that Schubert polynomials are a generating series for pipe dreams, more precisely, for a permutation $\sigma$,
			\[
			\mathfrak{S}_{\sigma} (x_1, \ldots, x_n) = \sum_{D \in \RP(\sigma)} \prod_{(i,j) \in D} x_i.
			\]

			We obtain the following theorem and thus Corollary~\ref{cor:3}.
			\begin{theorem}
				Let $S$ be a stack polyomino, let $\lambda$ be the associated Ferrers shape and let $k$ be a positive integer. Then $$\mathfrak{S}_{\sigma_k(S)}(x_1,x_2,\ldots) - \mathfrak{S}_{\sigma_k( \lambda )}(x_1,x_2,\ldots)$$
				is monomial positive. In particular, $\mathfrak{S}_{\sigma_k(S)}(1,1,\ldots)$ is greater than or equal to the number of $k$-flagged tableaux of shape $\lambda$.
			\end{theorem}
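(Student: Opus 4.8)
The plan is to express both Schubert polynomials through the pipe-dream formula $\mathfrak{S}_\sigma(x_1,\ldots,x_n) = \sum_{D \in \RP(\sigma)} \prod_{(i,j) \in D} x_i$ of Fomin--Kirillov, and to observe that the monomial $\prod_{(i,j)\in D} x_i$ attached to a pipe dream $D$ is exactly $\prod_i x_i^{c_i(D)}$, where $c_i(D)$ denotes the number of crossings of $D$ in row $i$. Hence the coefficient of a fixed monomial $\prod_i x_i^{c_i}$ in $\mathfrak{S}_\sigma$ counts the reduced pipe dreams for $\sigma$ with a prescribed number of crossings in every row, and proving that the difference is monomial positive amounts to a coefficient-wise inequality between these row-refined generating functions.

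First I would rewrite $\mathfrak{S}_{\sigma_k(\lambda)}$. By Corollary~\ref{cor:living dream} every reduced pipe dream for $\sigma_k(\lambda)$ already lives inside $\lambda$, so $\RP(\sigma_k(\lambda)) = \RP(\sigma_k(\lambda),\lambda)$; and by Corollary~\ref{cor:2} the number of pipe dreams in $\RP(\sigma_k(S),S)$ with a given number of crossings in every row equals the number in $\RP(\sigma_k(\lambda))$ with the same row distribution. Since the monomial weight only sees the per-row crossing data, these two generating functions agree term by term, giving $\mathfrak{S}_{\sigma_k(\lambda)} = \sum_{D \in \RP(\sigma_k(S),S)} \prod_{(i,j)\in D} x_i$. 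As $\RP(\sigma_k(S),S) \subseteq \RP(\sigma_k(S))$ by definition, I obtain
$$\mathfrak{S}_{\sigma_k(S)} - \mathfrak{S}_{\sigma_k(\lambda)} = \sum_{D \in \RP(\sigma_k(S)) \setminus \RP(\sigma_k(S),S)} \prod_{(i,j)\in D} x_i,$$
which is a sum of honest monomials and therefore monomial positive. For the final assertion I would specialize all $x_i$ to $1$: the pipe-dream formula yields $\mathfrak{S}_{\sigma_k(S)}(1,1,\ldots) = |\RP(\sigma_k(S))| \geq |\RP(\sigma_k(\lambda))|$, and the bijection of Section~\ref{sec:flagged tableaux} identifies the right-hand count with the number of $k$-flagged tableaux of shape $\D(\sigma_k(\lambda))$ (equivalently, with $|\Fne(\lambda,k)|$ via Theorem~\ref{th:fillings-pipe dreams}).

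I expect the genuine content to sit entirely in Corollary~\ref{cor:2}, which in turn rests on Jonsson's theorem transported through Theorem~\ref{th:moon fillings-pipe dreams}, so the main thing to get right at this stage is the bookkeeping. Concretely, the two points to verify carefully are that the exponent vector of the Schubert monomial of a pipe dream is precisely its per-row crossing statistic, and that the column rearrangement turning the stack polyomino $S$ into the Ferrers shape $\lambda$ preserves exactly this row statistic, so that Corollary~\ref{cor:2} applies verbatim to match the two row-refined generating functions. Once those identifications are in place, the remainder is only a containment of pipe-dream sets together with the specialization $x_i = 1$.
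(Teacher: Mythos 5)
Your proposal is correct and follows exactly the route the paper intends: the paper's proof is the single line ``This follows from Corollary~2,'' with the Fomin--Kirillov pipe-dream formula stated immediately beforehand, and your write-up simply makes explicit the row-refinement bookkeeping and the containment $\RP(\sigma_k(S),S)\subseteq\RP(\sigma_k(S))$ that this one-line proof leaves implicit. Your parenthetical care about the shape of the flagged tableaux (namely $\D(\sigma_k(\lambda))$ rather than $\lambda$ itself) is in fact slightly more precise than the paper's own statement.
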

			\begin{proof}
				This follows from Corollary~\ref{cor:2}.
			\end{proof}

		\section{Acknowledgements}
			The authors would like to thank Christian Krattenthaler for pointing to a conjecture in \cite{Nic2009} proved in Corollary~\ref{cor:determinant}. The first author would also like to thank Sergey Fomin and David Speyer for helpful discussions.

	\bibliographystyle{amsalpha} 
	\bibliography{../../../bibliography}
\end{document}